\newtheorem{theo}{Theorem}[section]
\newtheorem{prop}[theo]{Proposition}
\newtheorem{lem}[theo]{Lemma}
\theoremstyle{remark}
\newtheorem{rema}[theo]{Remark}
\theoremstyle{definition}
\newtheorem{defi}[theo]{Definition}
\title{On massless electron limit for a multispecies kinetic system with external magnetic field}
\begin{document}

\author{Maxime Herda
\email{herda@math.univ-lyon1.fr}
\address{Université Claude Bernard Lyon 1,
CNRS UMR 5208,
Institut Camille Jordan,
43 blvd. du 11 novembre 1918,
F-69622 Villeurbanne cedex,
France}}

\begin{abstract}
 We consider a three-dimensional kinetic model for a two species plasma consisting of electrons and ions confined by an external nonconstant magnetic field.
 Then we derive a kinetic-fluid model when the mass ratio $m_e/m_i$ tends to zero. 
 
 Each species initially obeys a Vlasov-type equation and the electrostatic coupling follows from a Poisson equation. 
 In our modeling, ions are assumed non-collisional while a Fokker-Planck collision operator is taken into account in the electron equation. 
 As the mass ratio tends to zero we show convergence to a new system where the macroscopic electron density satisfies an anisotropic drift-diffusion equation. 
 To achieve this task, we overcome some specific technical issues of our model such as the strong effect of the magnetic field on electrons and the lack of regularity at the limit.
 With methods including renormalized solutions, relative entropy dissipation and velocity averages, we establish the rigorous derivation of the limit model.

 \smallskip
\noindent \textsc{Keywords.} Multispecies; plasma physics; magnetic field; Vlasov-Poisson; Vlasov-Poisson-Fokker-Planck; drift-diffusion.

 \smallskip
\noindent \textsc{AMS subject classifications.} 35Q83, 35Q84, 35B25, 82D10, 76X05
\end{abstract}


\maketitle
\tableofcontents
\section{Introduction}
\label{intro}

In many plasma physics applications, the numerical simulation of full multi-species kinetic systems of equations can be extremely expensive in computer time.
Indeed, since the typical time, space and velocity scales of each species differ from several orders of magnitude, it requires a fine discretization to accurately approximate the different scales. 
We refer to \cite{badsi_2015_modelling} for a discussion on these issues in the two-species Vlasov-Poisson case. 
Therefore, part of the problem is sometimes overcome by making simplifying assumptions on species with negligible contribution to the whole dynamic. 
In this paper, we are interested in reducing a kinetic model by taking the limit when the mass ratio between light and heavy particles, namely electrons and ions, tends to $0$.

The charged gas evolves under its self-consistent electrostatic field and an external magnetic field. 
This configuration is typical of a tokamak plasma \cite{bellan_2006_fundamentals, miyamoto_2006_plasma} where the magnetic field is used to confine particles inside the core of the device.
We assume that on the time scale we consider, collisions on ions can be neglected while for electrons, it is entirely modeled with a Fokker-Planck operator.
At the formal level, an exhaustive study of asymptotic mass-disparate model with more involved collision operators such as the Boltzmann or Landau operator can be found in the review \cite{degond_2007_asymptotic} of Degond.
When the mass ratio goes to $0$, our model converges to a drift-diffusion equation featuring a magnetic-field dependent diffusion matrix for the electrons coupled with the original kinetic equation for the heavy particles. 
Similar parabolic equations with non-symmetric diffusion for plasmas can also be found in \cite{ben_2008_diffusion, degond_1998_model, degond_2001_diffusion, degond_2002_diffusion, degond_2007_asymptotic}.

Hereafter, we start from the physical equations and propose a detailed scaling with respect to the ions time scale. 
In the dimensionless system, for the light species equation, the leading order terms with respect to the mass ratio are those related to the magnetic field and the collisions.
Therefore, the resulting derivation is in the mean time similar to a strong magnetic field limit as in the papers of Golse and Saint-Raymond \cite{golse_1999_vlasov, saint_2000_gyrokinetic},
and to a diffusive or parabolic limit as in the work of Poupaud, Soler, Masmoudi and El Ghani in \cite{poupaud_2000_parabolic, el_2010_diffusion}.
While the latter papers provide us with many tools to handle our own problem, some technical issues in our analysis are closely related to the special features of our model.
In a single-species model of charged particles, the other particles density is usually given either as a static regular background or as a function of the electric potential,
while our ions are only known to obey a non-trivial kinetic equation. 
Because of this coupling, it turns out that some extra analysis is needed to recover the regularity required for our limit system to make sense.
Besides, we have to control the strong magnetic field. This is done by looking at the interplay between the increasing effect of oscillations and collisions in the asymptotic regime where the mass ratio tends to 0.
Indeed, despite the fact that collisions and magnetic forces appear at the same order of magnitude in original and limit equations we consider, we prove, by making the most of special cancellations, that with other respects collisional effects provide a form of control on magnetic contributions.
While the rest of our analysis seems rather robust the latter control --- that relies on cancellations --- does not seem to extend readily to other forms of dissipation operators such as linear Boltzmann collision operators.

\subsection{The physical model}

%
To avoid unnecessary technicalities, we suppose that there is only one type of ion in the plasma of mass $m_i$ and charge $q$, while $m_e$ denotes the electron mass and $-q$ their negative charge. 
The case of a plasma containing several types of ions can be treated in the same way.

The number of particle of type $\alpha$ in the phase space volume $dx\ dv$ at position $x\in\mathbb{R}^3$, with velocity $v\in\mathbb{R}^3$ between time $t$ and $t+dt$ is $f_\alpha(t,x,v)\ dx\ dv\ dt$.
The index $\alpha$ stands for the species of particles and can be either $i$ for ions or $e$ for electrons. 
The ion distribution function $f_i$ evolves according to a Vlasov equation and the electron distribution function $f_e$ follows a Vlasov-Fokker-Planck equation.
The coupling occurs through the Poisson equation that relates the electric field to the densities. 
The equations of the model, written in physical units, are the following
\begin{equation}
\left\{
 \begin{aligned}
&\partial_tf_i + v\cdot\nabla_xf_i  + \frac{q}{m_i}\left(- \nabla_x\phi + v\wedge B_\text{ext} \right) \cdot \nabla_vf_i = 0,\\
&\partial_tf_e + v\cdot\nabla_xf_e  - \frac{q}{m_e}\left(- \nabla_x\phi + v\wedge B_\text{ext} \right) \cdot \nabla_vf_e = \frac{1}{t_\text{col}}\nabla_v\cdot ( vf_e + \frac{k_B \theta}{m_e}\nabla_vf_e),\\
&-\varepsilon_0\Delta_x\phi = q(n_i - n_e),
\end{aligned}
\right.
\label{eqphy}
\end{equation}
where $t_\text{col}$ is the characteristic time between two collisions, $k_B$ is the Boltzmann constant, $\theta$ is the average electron temperature and $\varepsilon_0$ is the dielectric constant. 
The Poisson equation involves the macroscopic densities
\[
 n_{\alpha} = \int_{\mathbb{R}^3}f_{\alpha}dv, \quad\forall\alpha\in\{i,e\}.
\]

As mentioned before the typical time scales of ions and electrons largely differ due to the smallness of the mass ratio $m_e/m_i$.
In applications involving magnetic confinement fusion, ions are the particles of interest and approximations are made on the electron distribution function $f_e$ in order to simplify the model. 
A common reduction supposes that the macroscopic electron density is given by the Maxwell-Boltzmann density 
\[
 n_\text{MB}(t,x) = C(t) e^{\frac{q\phi(t,x)}{k_B \theta}}, 
\]
where $C(t)$ is a normalization function. 
The derivation of the latter from \eqref{eqphy}, with $B_\text{ext}=0$, is discussed in \cite{bouchut_1991_global} and obtained in \cite{bouchut_1995_on} for a one species Vlasov-Poisson-Fokker-Planck model.
To our knowledge, the case of a magnetized plasma has never been treated before. 
We stress that the presence of a strong magnetic field modifies even formal computations.

The goal of the present paper is to derive a fluid equation for electrons, when the mass ratio tends to $0$. 
The evolution of their dynamic shall obey an equation on the macroscopic density.
In the next paragraph we write the system in a consistent dimensionless form to receive the fluid model in the asymptotic regime of massless electrons.

\subsection{Scaling}

We denote by $L$ the characteristic length of the system, $t_0$ the characteristic time and $V_\alpha$ the thermal velocity for the species $\alpha\in\{i,e\}$. 
For any other physical quantity $G$, we denote by $\bar{G}$ the characteristic value of $G$ and $G'$ the dimensionless quantity associated to $G$ so that $G = \bar{G}G'$.
We assume that the plasma is globally neutral, which means that \[\bar{n}_i = \bar{n}_e =: N,\] and that the characteristic temperature (or kinetic energy) of each species are equal. 
A plasma satisfying the latter hypothesis is called a hot plasma \cite{bellan_2006_fundamentals} and satisfies with our notation \[m_iV_i^2 = m_eV_e^2 = k_B\theta.\]
The new unknowns of the system are then defined by the following relations
\[
 f_\alpha(t,x,v) = \frac{N}{V_\alpha^3}f'_\alpha\left(\frac{t}{t_0}, \frac{x}{L}, \frac{v}{V_\alpha}\right),\quad n_\alpha(t,x) = N n_\alpha'\left(\frac{t}{t_0}, \frac{x}{L}\right),\]\[  \phi(t,x) = \bar{\phi}\phi'\left(\frac{t}{t_0}, \frac{x}{L}\right).
\]

The dimensional analysis of \eqref{eqphy} introduces several important physical constants of the system, namely, for each species $\alpha\in\{i,e\}$
\[
 \lambda_D = \sqrt{\frac{\varepsilon_0k_B \theta}{q^2N}},\quad t^{(\alpha)}_{p} = \frac{\lambda_D}{V_\alpha},\quad t^{(\alpha)}_{c} = \frac{m_\alpha}{q\bar{B}_\text{ext}},\quad r_L^{(\alpha)} = V_\alpha t_c^{(\alpha)},
\]
which are respectively the Debye length, the plasma time, the cyclotron time and the Larmor radius.
More details on these constants can be found in the physics literature (see \cite{bellan_2006_fundamentals,miyamoto_2006_plasma}). 
We mention that the first two are typical scales of the electrostatic effects while the last two  are related to magnetic phenomena. 
Since the goal is to perform a model reduction for the electron dynamic, we choose a scaling relative to the typical ion time scale.
In particular, it means that we choose
\[t_0 = \frac{L}{V_i}.\]
From some of the original physical constants and characteristic quantities arise dimensionless parameters of the system, namely
\[
 \delta = \frac{\lambda_D}{L},\quad \eta = \frac{q\bar{\phi}}{k_B \theta},\quad \mu = \frac{\bar{\phi}}{LV_i\bar{B}_\text{ext}},\quad \varepsilon = \sqrt{\frac{m_e}{m_i}}.
\]
In a tokamak plasma these parameters would all be small.
 Dealing with the asymptotic $\delta\rightarrow0$ is called the quasineutral limit and has been first investigated for the Vlasov-Poisson system by Brenier and Grenier in
\cite{limite_1994_brenier, oscillations_1996_grenier, brenier_2000_convergence} and developed recently by Han-Kwan, Hauray and Rousset in \cite{han_2011_quasineutral, han_2015_quasineutral, han_2015_stability}.
The second parameter $\eta$ is called the coupling parameter, for it measures the importance of the electrostatic effects with respect to the thermal agitation.
The third parameter $\mu$ compares the Coulomb (electric) and the Laplace (magnetic) forces. 
The limit $\mu\rightarrow 0$ corresponds to the case of a strong magnetic field and has been studied for single-species plasma by Frenod and Sonnendrucker in \cite{frenod_1998_homogenization} 
and Golse and Saint-Raymond in \cite{golse_1999_vlasov, saint_2000_gyrokinetic}. 
This asymptotic is called the gyrokinetic or drift-kinetic approximation in the mathematics literature.
The last parameter $\varepsilon$, quantifying the mass ratio, is the main concern of this paper. 
We are interested in the limit of massless electrons, namely $\varepsilon\rightarrow 0$. 
The dimensionless equations write  
\begin{equation}
\left\{
 \begin{aligned}
&\partial_tf'_i + v\cdot\nabla_xf'_i  - \eta\nabla_x\phi'\cdot \nabla_v f'_i + \frac{\eta}{\mu}(v\wedge B_\text{ext}) \cdot \nabla_vf'_i = 0,\\
&\partial_tf'_e + \frac{1}{\varepsilon}v\cdot\nabla_xf'_e  + \frac{\eta}{\varepsilon}\nabla_x\phi'\cdot \nabla_v f'_e - \frac{\eta}{\mu\varepsilon^2}(v\wedge B_\text{ext}) \cdot \nabla_vf'_e = \frac{t_0}{t_\text{col}}\nabla_v\cdot ( vf'_e + \nabla_vf'_e)\\
&-\eta\delta^2\Delta_x\phi^\varepsilon =  n'_i -  n'_e.
\end{aligned}
\right.
\end{equation}

The link between the different time scales and dimensionless parameters follows from the previous relations and writes
\[
  t^{i}_{p} =\frac{1}{\varepsilon^2}t^{e}_{p} = \delta t_0,\quad t^{i}_{c} =\frac{1}{\varepsilon}t^{e}_{c} = \frac{\mu}{\eta}t_0.
\]
Finally we consider the regime where
\[
 \frac{t_0}{t_\text{col}} = \frac{1}{\varepsilon^2}.
\]
which means that the ratio between collision time and the observation time is of the same order than the mass ratio.
 This can be derived considering that the Knudsen number, namely the ratio between the mean free path of the particles and the observation 
length $L$ is of order $\varepsilon$. More details on this part of the scaling can be found in the paper
of the physicists Petit and Darrozes \cite{petit_1975_new} and in the work of Degond and Lucquin \cite{degond_1996_asymptotics, degond_1996_transport, degond_2007_asymptotic}
and more recently Graille, Magin, Massot and Giovangigli \cite{graille_2009_kinetic, giovangigli_2010_multicomponent, giovangigli_2010_multicomponent2}.
This makes the collision the leading order term with the magnetic field in the electron equation. 
In its analysis, the resulting scaling slightly differs from the usual parabolic or hydrodynamic scaling of Vlasov-Poisson-Fokker-Planck
since the magnetic field introduces fast oscillations when $\varepsilon$ tends to zero and it is \emph{a priori} not clear that the latter
can be controlled by the dissipation effect due to collisions.

By taking weaker collisions, say $t_0/t_\text{col} = 1/\varepsilon$, one formally derives a Maxwell-Boltzmann density in the direction parallel to the magnetic field
and a guiding center model in the perpendicular plane when $\varepsilon\rightarrow 0$.
This limit model features interesting and well-known physical phenomena. Its derivation and analysis will be investigated in future work.

\subsection{The mathematical model}
From now on, we keep $\delta, \eta, \mu$ fixed and thus, for writing convenience, we assume that $\delta = \eta = \mu = 1$. 
The rescaled two species (Vlasov)-(Vlasov-Fokker-Planck)-(Poisson) kinetic system with external magnetic field writes
\begin{equation}
\left\{
\begin{aligned}
&\partial_tf^{\varepsilon}_i + v\cdot\nabla_xf^{\varepsilon}_i  - \nabla_x\phi^{\varepsilon}\cdot \nabla_v f^{\varepsilon}_i + (v\wedge B) \cdot \nabla_vf^{\varepsilon}_i = 0,\\
&\varepsilon\partial_tf^{\varepsilon}_e + v\cdot\nabla_xf^{\varepsilon}_e  + \nabla_x\phi^{\varepsilon}\cdot \nabla_v f^{\varepsilon}_e - \frac{1}{\varepsilon}(v\wedge B) \cdot \nabla_vf^{\varepsilon}_e = \frac{1}{\varepsilon}\mathcal{L}_{I}(f^{\varepsilon}_e),\\
&-\Delta_x\phi^\varepsilon =  n^{\varepsilon}_i -  n^{\varepsilon}_e.
\end{aligned}
\right.
\label{Vi-VFPe-P}
\end{equation}
where we have dropped primes and marked the dependency on $\varepsilon$ with an exponent.
The Fokker-Planck operator on the right-hand side of the second equation of \eqref{Vi-VFPe-P} is defined by
\[
 \mathcal{L}_{I}: f\mapsto \nabla_v\cdot (vf + \nabla_vf).
\]
The external magnetic field is of the form
\[
B(t,x) = \left(b_1\ b_2\ b_3\right)^\top,
\]
in the canonical euclidean basis of $\mathbb{R}^3$. 
The magnetic field $B$ is assumed to belong to $L^\infty(0,T;L^{\infty}_x)$ and to satisfy the Maxwell-Thompson equation $\nabla_x\cdot B = 0$. 
Its effect on the particle is given by the Lorentz force, appearing in the kinetic equations as ''$(v\wedge B)\cdot\nabla_v$''.
The Cauchy problem is completed with initial conditions on distribution functions
\[
f^\varepsilon_{\alpha}(t=0,x,v) = f^\text{in}_{\alpha}(x,v), \quad\forall\alpha\in\{i,e\}.
\]
The Poisson equation can be reformulated using its fundamental solution $\Phi$, in dimension $3$
\begin{equation}
 \Phi(x) = \frac{1}{4\pi|x|}.
 \label{Phi}
\end{equation}
The potential $\phi^\varepsilon$ is then given by
\begin{equation}
 \phi^\varepsilon(t,x) = \Phi\ast_x n_\varepsilon = \int_{\mathbb{R}^3}\Phi(x-y) n^\varepsilon(t,y) dy,
 \label{elecpot}
\end{equation}
where $n^\varepsilon$ denotes the total charge density and is given by
\[ 
n^\varepsilon(t,x) =  n^\varepsilon_i(t,x) -  n^\varepsilon_e(t,x).
\]
For later use, we also introduce the current density
 \[
 j^\varepsilon(t,x) = j^\varepsilon_i(t,x) - j^\varepsilon_e(t,x)\text{, where }j_i = \int_{\mathbb{R}^3}vf^\varepsilon_i dv\text{ and }j_e = \frac{1}{\varepsilon}\int_{\mathbb{R}^3}vf^\varepsilon_e dv.
 \]
and the rescaled uniform Maxwellian, in dimension 3
\begin{equation}
 M(v) = \frac{1}{(2\pi)^{3/2}}e^{-\frac{|v|^2}{2}}.
 \label{maxwellian}
\end{equation}

The existence of global weak solutions for the Vlasov-Poisson system, namely the first and the last equation of \eqref{Vi-VFPe-P} with a given electron background $n_e^{\varepsilon}$,
was first established by Arsenev in \cite{arsenev_1975_existence}. 
DiPerna and Lions improved conditions on the initial data for this result in  \cite{diperna_1988_global} and adapted their renormalization techniques for the Vlasov-Poisson system \cite{di_1988_solutions}.
Concerning classical solutions, global existence in dimension $2$ was proved by Ukai and Okabe for well-localized initial data in \cite{ukai_1978_classical}.
The case of dimension $3$ was considered by Bardos and Degond in \cite{bardos_1985_global}. 

The theory for the Vlasov-Poisson-Fokker-Planck system, consisting of last two equations of \eqref{Vi-VFPe-P} with a given ion background $n_i^{\varepsilon}$, is also well-known.
The existence of global weak solutions is proved in \cite{victory_1991_existence, carrillo_1995_initial} and results on the existence and uniqueness of smooth solutions were 
first obtained by Degond in \cite{degond_1986_global} and generalized by Bouchut in \cite{bouchut_1993_existence}. 
Here, we shall consider DiPerna-Lions renormalized solutions of the Vlasov-Fokker-Planck equation as introduced in \cite{di_1988_solutions}. 
Our choice is motivated by two observations. 
First, the presence of the friction part of the Fokker-Planck operator precludes any hope for a uniform (in time and in $\varepsilon$) $L^p_{x,v}$ bound of $f_e^\varepsilon$ in the small $\varepsilon$ limit when $p>1$. 
As Poupaud and Soler proved in \cite{poupaud_2000_parabolic}, a uniform in $\varepsilon$ bound can be obtained for the adequate weighted $L^p$ norm but only on a finite time decreasing with $p$, which is not quite satisfying.
On the other hand, knowing merely that  $f_e^\varepsilon$ lies in $L^1_{x,v}$ and that $\nabla_x\phi^\varepsilon$ belongs to $L^2_x$ is insufficient to give even a distributional
sense to the product $f_e^\varepsilon\nabla_x\phi^\varepsilon$ appearing in the electron equation of \eqref{Vi-VFPe-P}. 
The concept of renormalized solutions proposes to replace the direct consideration of the troublesome equation for $f_e^\varepsilon$ by a family of meaningful equations for $\beta(f_e^\varepsilon)$,
$\beta$ ranging through a sufficiently large class of smooth functions.

For the coupled system \eqref{Vi-VFPe-P}, we consider weak solutions of the ion equation and renormalized solutions of the electron equation, the electric potential being given by \eqref{elecpot}. 

\begin{defi}
 We say that a triplet $(f^\varepsilon_i, f^\varepsilon_e, \phi^\varepsilon)$ is a solution on $[0,T)$ to the Cauchy problem \eqref{Vi-VFPe-P} with initial data $f^\text{in}_i, f^\text{in}_e$ if it satisfies
 \begin{enumerate}
  \item[\textbf{1)}] $f^\varepsilon_i\in L^\infty(0,T;L^1_{x,v}\cap L^\infty_{x,v})$; $f^\varepsilon_e\in L^\infty(0,T;L^1_{x,v})$; $f^\varepsilon_{\alpha}\geq 0$ almost everywhere for $\alpha\in\{i,e\}$,
  \item[\textbf{2)}] $\phi^\varepsilon \in L^\infty(0,T;\dot{H}^1_x)$; $\nabla_v\sqrt{f^\varepsilon_e}\in L^2(0,T;L^2_{x, v})$,
\end{enumerate}
  and, for every $\varphi\in \mathcal{D}([0,T)\times\mathbb{R}^6)$
 \begin{enumerate}
  \item[\textbf{3)}] The mapping $t\mapsto\iint_{\mathbb{R}^6}\varphi f^\varepsilon_i dv dx$ is continuous and the first equation of \eqref{Vi-VFPe-P} holds in the sense of distributions on $[0,T)\times\mathbb{R}^6$ 
  with the initial condition $f_i^\text{in}$.
  \item[\textbf{4)}] For every function $\beta\in\mathcal{C}^2(\mathbb{R}_+)$ satisfying \[|\beta(u)|\leq C(\sqrt{u} + 1),\ |\sqrt{u}\beta'(u)|\leq C,\ |u\beta''(u)|\leq C,\] for some $C>0$, 
  the mapping $t\mapsto\iint_{\mathbb{R}^6}\varphi \beta(f^\varepsilon_e) dv dx$ is continuous and $\beta(f^\varepsilon_e)$ satisfies in the sense of distributions on $[0,T)\times\mathbb{R}^6$ 
  \begin{equation}
   \varepsilon\partial_t\beta(f^\varepsilon_e) + v\cdot\nabla_x \beta(f^\varepsilon_e) + \nabla_x\phi^\varepsilon\cdot\nabla_v\beta(f^\varepsilon_e) - \frac{1}{\varepsilon}(v\wedge B)\cdot\nabla_v\beta(f^\varepsilon_e) = 
   \frac{1}{\varepsilon}\mathcal{L}_{I}(f^\varepsilon_e)\beta'(f^\varepsilon_e),
   \label{ebeta}
  \end{equation}
  with the initial condition $\beta(f_e^\text{in})$.
  \item[\textbf{5)}] For any $\lambda>0$, $\theta_{\varepsilon,\lambda} = \sqrt{f^\varepsilon_e+\lambda M}$ is such that the mapping $t\mapsto\iint_{\mathbb{R}^6}\varphi \theta_{\varepsilon,\lambda} dv dx$ is continuous 
  and $\theta_{\varepsilon,\lambda}$ satisfies in the sense of distributions on $[0,T)\times\mathbb{R}^6$
  \begin{equation}
   \varepsilon\partial_t\theta_{\varepsilon,\lambda} + v\cdot\nabla_x\theta_{\varepsilon,\lambda} + \nabla_x\phi^\varepsilon\cdot\nabla_v\theta_{\varepsilon,\lambda}  =
   \frac{1}{2\varepsilon\theta_{\varepsilon,\lambda}}\mathcal{L}_{A}(f^\varepsilon_e) - \frac{\lambda M}{2\theta_{\varepsilon,\lambda}}v\cdot\nabla_x\phi^\varepsilon,
   \label{etheta}
  \end{equation}
 where the Fokker-Planck operator and the term related to the magnetic field are gathered within the operator $\mathcal{L}_{A}$ which may be written in the following form
 \begin{equation}
  \mathcal{L}_{A}(f) = \nabla_v\cdot(A(t,x)vf + \nabla_vf)
 \end{equation}
 where the matrix $A(t,x)$ is given by
 \begin{equation}
 Av = v+v\wedge B = \left(\begin{matrix}1&b_3&-b_2\\-b_3&1&b_1\\b_2&-b_1&1\end{matrix}\right)v.
 \end{equation}
 \end{enumerate}
 \label{D1}
\end{defi}

\begin{rema}
 One readily checks that weak formulations of 3), 4) and 5) are consistent with estimates in 1) and 2). 
 We stress that we need to use two types of renormalization for the electron Vlasov-Fokker-Planck equation.
 Actually, points 1) to 4) are sufficient to define a self-consistent notion of solution for which we can prove an existence result. 
 The introduction of the additional equation $\eqref{etheta}$ is inspired by \cite{el_2010_diffusion}.
 It comes from a renormalization of the equation satisfied by $f_e^\varepsilon/M$ with the function $s\mapsto\sqrt{s+\lambda}$.
 While equation \eqref{ebeta} provides us with the required alternative meaning for the electron equation in \eqref{Vi-VFPe-P}, we shall pass to the limit $\varepsilon\rightarrow 0$ in \eqref{etheta}.
\end{rema}

\subsection{Main result}

Our main goal is to prove the convergence of solutions to \eqref{Vi-VFPe-P}--in the sense of Definition \ref{D1}--towards weak solutions of the following coupled kinetic-fluid system
 \begin{equation}
 \left\{
 \begin{aligned}
  &\partial_tf_i + v\cdot\nabla_xf_i - \nabla_x\phi\cdot\nabla_vf_i + (v\wedge B)\cdot\nabla_vf_i = 0,\\
  &\partial_t n_e + \nabla_x\cdot j_e = 0,\\
  & j_e = -D(\nabla_x n_e - \nabla_x\phi n_e),\\
  &-\Delta_x\phi =  n_i -  n_e,\\
  &f_i(0,\cdot,\cdot) = f_i^\text{in}\text{ and } n_e(0,\cdot,\cdot) = \int f_e^\text{in}dv.
  \end{aligned}
  \right.
  \label{driftdiff}
 \end{equation}
where the diffusion matrix is given by
\[
D(t,x) = A^{-1}(t,x) = \frac{1}{1+|B|^2}\left(\begin{matrix}1 + b_1^2&-b_3+b_1b_2&b_2 + b_1b_3\\b_3 + b_1b_2&1 + b_2^2&-b_1+b_2b_3\\-b_2 + b_1b_3&b_1 + b_2b_3&1+b_3^2\end{matrix}\right).
\]
 Since $B$ is essentially bounded, it yields \[D \in~L^\infty(0,T;L^{\infty}_x).\] Moreover by denoting $I_3$ the identity matrix in dimension $3$, one sees that
\[
 \text{Re}(D) = \frac{D + D^\top}{2}\geq \frac{1}{1+|B|^2}I_3
\]
is uniformly positive definite.
\begin{rema}
  Let us denote by $E = -\nabla_x\phi$ the electric field.
  In the strong magnetic field limit, namely $|B|\rightarrow\infty$, the particles follow the guiding center dynamic (see for example \cite{northrop_1961_guiding, miyamoto_2006_plasma, bellan_2006_fundamentals} in the physics literature).
  They are mainly advected along the magnetic field and transport in the perpendicular plane, called the electric drift, occurs at the next order in $1/|B|$. 
  The corresponding parallel and drift velocities are respectively
  \[
    \mathbf{v}_\| = -\frac{(E\cdot B)B}{|B|^2},\quad \mathbf{v}_\text{drift} = \frac{E\wedge B}{|B|^2}.
  \]
  The guiding center dynamic can be recovered from the limit electron equation in \eqref{driftdiff}. Indeed, note that applying the diffusion matrix to the electric field gives
   \begin{equation}
   \begin{aligned}
   &DE &=&&& E - \frac{1}{1+|B|^2} E\wedge B + \frac{1}{1+|B|^2} (E\wedge B)\wedge B,\\
      &&=&&& \frac{1}{1+|B|^2}\left[(E\cdot B)B - E\wedge B + E\right].\\
   \end{aligned}
  \label{devlim}
   \end{equation}
   Therefore the limit equation for the electrons \emph{formally} rewrites 
   \[
    \partial_t n_e + \nabla_x\cdot\left[(\mathbf{v}_\| + \mathbf{v}_\text{drift} + O\left(1/|B|^2\right)) n_e - D\nabla_x n_e\right]= 0.
   \]
\end{rema}

\begin{defi}
 A triplet $(f_i, n_e, \phi)$ is called a weak solution on $[0,T)$ of the Cauchy problem \eqref{driftdiff} if it satisfies
 \begin{enumerate}
  \item[\textbf{1)}] $f_i\in L^\infty(0,T;L^1_{x,v}\cap L^\infty_{x,v})$;  $f_{i}\geq 0$ almost everywhere,
  \item[\textbf{2)}] $n_e\in L^\infty(0,T;L^1_{x})$; $n_e\geq 0$ almost everywhere,
  \item[\textbf{3)}] $\phi = \Phi\ast_x (\int f_i dv - n_e) \in L^\infty(0,T;\dot{H}^1_x)$ and $n_e\nabla_x\phi\in L^1_\text{loc}([0,T)\times\mathbb{R}^6)$,
\end{enumerate}
  and for every $\varphi\in \mathcal{D}([0,T)\times\mathbb{R}^6)$, $\psi\in \mathcal{D}([0,T)\times\mathbb{R}^3)$
 \begin{enumerate}
  \item[\textbf{4)}] The mappings $t\mapsto\iint_{\mathbb{R}^6}\varphi f_i dv dx$ and $t\mapsto\int_{\mathbb{R}^3}\psi n_e dx$ are continuous and \eqref{driftdiff} holds in the sense of distributions.
 \end{enumerate}
 \label{D2}
\end{defi}

Let us state the main result of this paper. We make the following assumptions on the initial data
\begin{align} 
&\forall\alpha\in\{i,e\},\ f^\text{in}_{\alpha}\geq 0\text{ almost everywhere,} \label{H0}\tag{A1}\\
&\sum_{\alpha\in\{i,e\}}\iint_{\mathbb{R}^6}f^\text{in}_\alpha(|x|+|v|^2 +|\ln f^\text{in}_\alpha|)dv dx + \int_{\mathbb{R}^3}\left|\nabla_x\phi^\text{in}\right|^2dx<+\infty,\label{H1}\tag{A2}\\
&f_i^\text{in}\in L^1_{x,v}\cap L^\infty_{x,v}, \quad f_e^\text{in}\in L^1_{x,v},\label{H2}\tag{A3}\\
&\iint f^\text{in}_idv dx = \iint f^\text{in}_e dv dx.\label{H3}\tag{A4}
\end{align}
where $\phi^\text{in} = \Phi\ast_x(\int f^\text{in}_idv - \int f^\text{in}_e dv)$. 
\begin{rema}
The case of initial data depending on $\varepsilon$ may be treated in the same way as soon as assumptions \eqref{H0} to \eqref{H3} are satisfied uniformly in $\varepsilon$. 
In this case one may pick as limit initial condition for \eqref{driftdiff} any accumulation point of the sequence of initial data. 
\end{rema}
\begin{theo}
 Under assumptions \eqref{H0} to \eqref{H3}, there exists a solution $(f^\varepsilon_i, f^\varepsilon_e, \phi^\varepsilon)$ of \eqref{Vi-VFPe-P} in the sense of Definition \ref{D1}. 
 Moreover, for any such solution, one has, when $\varepsilon\rightarrow 0$ and up to the extraction of a subsequence
 \[
  \begin{aligned}
   &f^\varepsilon_i\longrightarrow f_i &&\text{weakly-$\star$ in }L^\infty([0,T)\times\mathbb{R}^3\times\mathbb{R}^3),\\
   &f^\varepsilon_e\longrightarrow  n_e M &&\text{strongly in }L^1(0,T;L^1_{x,v}),\\
   &\phi^\varepsilon\longrightarrow \phi &&\text{strongly in }L^2(0,T;\dot{W}^{1,p}_{x})\text{ for } 1\leq p <2,\\
  \end{aligned}
 \]
  and the limit $(f_i,  n_e, \phi)$ is a weak solution of \eqref{driftdiff}. 
 \label{main}
\end{theo}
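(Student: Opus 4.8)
The plan is to build, for each fixed $\varepsilon>0$, a solution in the sense of Definition~\ref{D1}, to extract $\varepsilon$-uniform bounds from a free-energy dissipation law, and then to pass to the limit. For existence at fixed $\varepsilon$, I would regularize the initial data and mollify the quadratic couplings, solve the smoothed coupled system by a fixed-point argument (characteristics / DiPerna--Lions theory for the ion Vlasov equation, parabolic theory in $v$ for the electron Vlasov--Fokker--Planck equation, Poisson inversion for the field), and then let the regularization vanish, using velocity averaging to give a distributional sense to the quadratic terms and the renormalized formulations 4)--5) of Definition~\ref{D1} to handle $f_e^\varepsilon\nabla_x\phi^\varepsilon$. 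The estimate that survives is the free-energy law: testing the ion equation against $|v|^2/2$, the electron equation against $1+\ln(f_e^\varepsilon/M)$, adding $\tfrac12\|\nabla_x\phi^\varepsilon\|_{L^2_x}^2$ from the Poisson equation, and using the continuity relations $\partial_t n_\alpha^\varepsilon+\nabla_x\cdot j_\alpha^\varepsilon=0$, one checks that all electric-work terms telescope (because $j^\varepsilon=j_i^\varepsilon-j_e^\varepsilon$) and that the magnetic terms vanish thanks to $(v\wedge B)\cdot v=0$ and $\nabla_v\cdot(v\wedge B)=0$, leaving
\[
\mathcal F^\varepsilon(t)+\frac1{\varepsilon^2}\int_0^t\!\iint_{\mathbb R^6}f_e^\varepsilon\Big|\nabla_v\ln\tfrac{f_e^\varepsilon}{M}\Big|^2\,dv\,dx\,ds\le\mathcal F^\varepsilon(0),\qquad \mathcal F^\varepsilon=\iint f_i^\varepsilon\tfrac{|v|^2}2+\iint f_e^\varepsilon\ln\tfrac{f_e^\varepsilon}M+\tfrac12\|\nabla_x\phi^\varepsilon\|_{L^2_x}^2 .
\]
Under \eqref{H0}--\eqref{H3}, $\mathcal F^\varepsilon(0)$ is finite and independent of $\varepsilon$, the $|x|$-moments needed to bound the negative part of the relative entropy being controlled in a bootstrap with the dissipation (using $\tfrac{d}{dt}\iint f^\varepsilon_e|x|\le\|j^\varepsilon_e\|_{L^1_x}$).

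From this law one obtains, uniformly in $\varepsilon$: boundedness of $\|f_\alpha^\varepsilon\|_{L^\infty_tL^1_{x,v}}$, $\iint f_i^\varepsilon|v|^2$, $\iint f_e^\varepsilon|\ln f_e^\varepsilon|$, $\iint f_e^\varepsilon|v|^2$ and $\|\nabla_x\phi^\varepsilon\|_{L^\infty_tL^2_x}$; the entropy dissipation is $O(\varepsilon^2)$, so $\nabla_v\sqrt{f_e^\varepsilon/M}=O(\varepsilon)$ in $L^2(M\,dv\,dx\,dt)$; and the entropy bound makes $f_e^\varepsilon$ and $|v|^2f_e^\varepsilon$ equi-integrable. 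The ion equation is transported by the divergence-free field $(v,-\nabla_x\phi^\varepsilon+v\wedge B)$, hence $\|f_i^\varepsilon\|_{L^\infty_{t,x,v}}\le\|f_i^{\mathrm{in}}\|_{L^\infty}$, so $f_i^\varepsilon\rightharpoonup f_i$ weakly-$\star$, and a velocity-averaging lemma (the force being bounded in $L^\infty_tL^2_x$) makes $n_i^\varepsilon$ strongly compact in $L^p_{t,x}$, $1\le p<\infty$. Writing $\varepsilon j_e^\varepsilon=\int(vf_e^\varepsilon+\nabla_vf_e^\varepsilon)\,dv$ and using Cauchy--Schwarz against the dissipation gives $\|j_e^\varepsilon\|_{L^1_{t,x}}\le C$ and equi-integrability of $j_e^\varepsilon$ (dominated by $(\int_E n_e^\varepsilon)^{1/2}$), so $j_e^\varepsilon\rightharpoonup j_e$ weakly in $L^1$ along a subsequence; moreover $\varepsilon j_e^\varepsilon$ is bounded in $L^\infty_tL^1_x$, hence $\varepsilon^2j_e^\varepsilon\to0$.

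For the electrons, the dissipation bound together with the Gaussian Poincaré inequality gives $f_e^\varepsilon-n_e^\varepsilon M\to0$ in a weighted $L^1$-type norm --- this is the coercivity of the full operator $\mathcal L_A$, whose kernel is exactly $\{\rho(t,x)M\}$ and to whose dissipation the magnetic part contributes nothing, by the same cancellation --- while a velocity-averaging lemma adapted to the scaling, applied to the renormalized equations \eqref{ebeta}--\eqref{etheta}, makes $n_e^\varepsilon$ strongly compact in $L^1_{t,x}$; together, $f_e^\varepsilon\to n_eM$ strongly in $L^1(0,T;L^1_{x,v})$ and $n_e^\varepsilon\to n_e$ strongly in $L^1_{t,x}$. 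Then $-\Delta_x\phi^\varepsilon=n_i^\varepsilon-n_e^\varepsilon$, with $n_e^\varepsilon$ converging strongly in $L^1$ and $n_i^\varepsilon$ strongly in $L^p$, gives by elliptic regularity $\phi^\varepsilon\to\phi$ strongly in $L^2(0,T;\dot W^{1,p}_x)$ for every $1\le p<2$. With these convergences the ion equation (the product $f_i^\varepsilon\nabla_x\phi^\varepsilon$ being closed by the strong convergence of $\nabla_x\phi^\varepsilon$ and averaging) and the electron continuity equation $\partial_t n_e^\varepsilon+\nabla_x\cdot j_e^\varepsilon=0$ pass to the limit in $\mathcal D'$, and the required time-continuities are inherited from Definition~\ref{D1}.

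It remains to identify $j_e=-D(\nabla_x n_e-n_e\nabla_x\phi)$ and hence to recover the flux law of \eqref{driftdiff}. Taking the first $v$-moment of the electron equation, the magnetic contribution collapses exactly into the matrix $A=I+(\,\cdot\wedge B)$:
\[
A\,j_e^\varepsilon=-\nabla_x\!\cdot\!\!\int_{\mathbb R^3}v\otimes v\,f_e^\varepsilon\,dv+n_e^\varepsilon\nabla_x\phi^\varepsilon-\varepsilon^2\partial_tj_e^\varepsilon .
\]
The pressure term tends to $\nabla_x n_e$ since $|v|^2f_e^\varepsilon$ is equi-integrable, $f_e^\varepsilon\to n_eM$ and $\int_{\mathbb R^3}v\otimes v\,M=I_3$; the term $\varepsilon^2\partial_tj_e^\varepsilon\to0$ because $\varepsilon^2j_e^\varepsilon\to0$; and the delicate product $n_e^\varepsilon\nabla_x\phi^\varepsilon$ (with $n_e^\varepsilon$ only in $L^1$ and $\nabla_x\phi^\varepsilon$ only in $L^2$) is treated via the auxiliary renormalization $\theta_{\varepsilon,\lambda}=\sqrt{f_e^\varepsilon+\lambda M}$ of \eqref{etheta}: for fixed $\lambda>0$ the lower bound $\theta_{\varepsilon,\lambda}\ge\sqrt{\lambda M}$ tames the singular factors, one passes to the limit in \eqref{etheta}, recovers the regularity making $\nabla_x n_e$ and $n_e\nabla_x\phi\in L^1_{\mathrm{loc}}$ meaningful, and then lets $\lambda\to0$. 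Since $D=A^{-1}$ is a fixed $L^\infty$ matrix, hence a weakly continuous multiplier on $L^1$, passing to the limit yields $j_e=-D(\nabla_x n_e-n_e\nabla_x\phi)$. The main obstacle is precisely this identification, whose heart is the control of the $O(1/\varepsilon^2)$ magnetic oscillations of the electrons by the collisions: although the magnetic force and the Fokker--Planck operator enter at the same order, the algebraic identities $(v\wedge B)\cdot v=0$, $\nabla_v\cdot(v\wedge B)=0$ erase the magnetic term from the entropy dissipation and absorb it into the single coercive operator $\mathcal L_A$, which is what prevents the fast rotation from destroying compactness --- a genuine cancellation, whence the remark that it does not extend to general dissipation operators --- whereas the absence of Sobolev regularity at the limit is what forces the $\theta_{\varepsilon,\lambda}$-detour and makes the renormalized framework of Definition~\ref{D1} indispensable throughout.
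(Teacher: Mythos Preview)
Your overall architecture --- existence at fixed $\varepsilon$, the free-energy/entropy law with magnetic cancellations, Dunford--Pettis and averaging compactness, strong convergence $f_e^\varepsilon\to n_eM$ via log-Sobolev/Csisz\'ar--Kullback--Pinsker, Aubin--Lions for $\nabla_x\phi^\varepsilon$ --- tracks the paper closely and is sound. The genuine gap is in the identification of $j_e$.

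Your proposal to take the first $v$-moment of the electron equation is not available here: the electron equation is known only in the renormalized forms \eqref{ebeta}--\eqref{etheta}, precisely because $f_e^\varepsilon\nabla_x\phi^\varepsilon\notin L^1_{\mathrm{loc}}$ a priori, and neither $\beta(u)=u$ nor multiplication by $v$ is an admissible operation. Even formally, the moment balance you write has two further problems: (i) convergence of $\int v\otimes v\,f_e^\varepsilon\,dv\to n_e I_3$ would need tightness of $|v|^2f_e^\varepsilon$ in $v$, which the entropy bound alone does not give; (ii) the product $n_e^\varepsilon\nabla_x\phi^\varepsilon$ pairs an $L^1$ object with an $L^2$ one, so weak--strong passage fails. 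Your suggestion that \eqref{etheta} ``treats this product'' is a category error: the $\theta_{\varepsilon,\lambda}$-equation is not a device to legitimize the moment identity, it \emph{replaces} it entirely.

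The paper's route works at the square-root level throughout. One introduces $r_e^\varepsilon=(\sqrt{f_e^\varepsilon}-\sqrt{n_e^\varepsilon M})/(\varepsilon\sqrt M)$, shows from the dissipation that $(r_e^\varepsilon)$ and $(\nabla_v r_e^\varepsilon)$ are bounded in $L^2(M\,dv\,dx\,dt)$, writes $j_e^\varepsilon=2\sqrt{n_e^\varepsilon}\int v\,r_e^\varepsilon M\,dv+\varepsilon\int v|r_e^\varepsilon|^2M\,dv$, and passes to the limit directly in \eqref{etheta} using the decomposition $R_e^\varepsilon/M=\sqrt{n_e^\varepsilon}\,r_e^\varepsilon+\tfrac{\varepsilon}{2}|r_e^\varepsilon|^2$. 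After $\lambda\to0$ this yields $vM\cdot(\nabla_x\sqrt{n_e}-\tfrac12\nabla_x\phi\,\sqrt{n_e})=\mathcal L_A(r_eM)$, and testing against $v$ gives $j_e=-2\sqrt{n_e}\,A^{-1}(\nabla_x\sqrt{n_e}-\tfrac12\nabla_x\phi\,\sqrt{n_e})$ with the bracket in $L^2_{t,x}$. Only \emph{after} this comes a nontrivial regularity bootstrap (Lemmas~\ref{regrho}--\ref{bootstrap}): renormalizing $\nabla_x\sqrt{n_e}+\tfrac12 E\sqrt{n_e}=G\in L^2$ and feeding it back into $-\Delta_x\phi=n_i-n_e$ gives $n_e\in L^{5/3}$ (the exponent being dictated by $n_i\in L^{5/3}$ from the ion moment lemma), then Hardy--Littlewood--Sobolev yields $\nabla_x\phi\in L^{15/4}$, and a Sobolev embedding finally separates $\nabla_x\phi\,\sqrt{n_e}\in L^2_{\mathrm{loc}}$ from $\nabla_x\sqrt{n_e}\in L^2_{\mathrm{loc}}$, so that $j_e=-D(\nabla_x n_e-n_e\nabla_x\phi)$ makes sense termwise in $L^1_{\mathrm{loc}}$. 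You allude to ``recovering the regularity'' but this bootstrap --- which hinges on the specific Poisson coupling and the limited ion regularity --- is a second essential ingredient your sketch does not supply. (A minor side point: your claim that $n_i^\varepsilon$ is strongly compact in $L^p_{t,x}$ for all $p<\infty$ is too optimistic; only $L^{5/3}$ integrability is available, and the paper in fact never uses strong compactness of $n_i^\varepsilon$.)
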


The parabolic limit for the Vlasov-Poisson-Fokker-Planck system with a given ion background
has been studied in dimension $2$ by Poupaud-Soler in \cite{poupaud_2000_parabolic} and Goudon in \cite{goudon_2005_hydrodynamic}.
Masmoudi and El Ghani generalized these results in any dimension in \cite{el_2010_diffusion}, using DiPerna-Lions renormalized solutions and averaging lemmas. 
The procedure they follow was introduced by Masmoudi and Tayeb to study the diffusion limit of a semi-conductor Boltzmann-Poisson system in \cite{masmoudi_2007_diffusion}. 
Let us point out, here and later, the importance of the latter paper, which provides efficient tools to derive a global in time result for our own problem. 
A recent paper \cite{wu_2014_diffusion} of Wu, Lin and Liu treats with this method the case of a multispecies model where several Vlasov-Fokker-Planck equations are coupled by a Poisson equation on a bounded domain.
Let us also mention \cite{ el_2010_diffusion1}, where the author deals with the case of a self-consistent magnetic field in the context of a Vlasov-Maxwell-Fokker-Planck system.

Here, we are considering a multispecies model with an external magnetic field. This brings some new technical difficulties in the analysis.
First, the coupling with a non-collisional kinetic equation rather than a fixed background of charged particles makes it harder to recover regularity sufficient to give a distributional sense to the limit problem. 
Indeed, the regularity given by Vlasov-Poisson in dimension three for the ions impacts that of the electrons (see Lemma \ref{regrho} and \ref{bootstrap}). 
Besides, by considering a magnetic field of the same order of magnitude than the collisions, it is not clear at all that one can control the fast Larmor oscillations at the limit especially in the three dimensional setting. 
As an example of this difficulty in the non-collisional case of Vlasov-Poisson,
we refer to the paper of Saint-Raymond \cite{saint_2000_gyrokinetic} where only the two-dimensional case can be treated since the dynamic along the magnetic field lines is too fast to be captured at the limit in this scaling. 
Here, we are able to control the whole dynamic thanks to the dissipative effect of the collisions and the orthogonality between the Lorentz force and the velocity field (see Lemma \ref{corestimates} and Proposition \ref{prop1}).
An other proof of the nontrivial effect of the $B$ field can be seen with the diffusion matrix of the limit drift-diffusion equation which is anisotropic, for it contains the magnetic field effects. 
Let us mention that such a model was derived in a linear setting (\textit{i.e.} with an external electric field) by Ben Abdallah and El Hajj in \cite{ben_2008_diffusion} with a linear Boltzmann collision kernel.

In the rest of this paper we detail the proof of Theorem \ref{main}. The outline is as follows. In section \ref{apriori}, we introduce natural estimates associated with \eqref{Vi-VFPe-P}. 
These estimates will be crucial to prove our derivation and the existence of solution which is briefly discussed in Section \ref{existencee}. After proving the required compactness of the family of solutions in Section \ref{compactness},
we will take limits in equations in Section \ref{limit}. Eventually, in Section \ref{regularity}, we shall use the algebraic structure of the limit system to gain some regularity which will allow us
to recover \eqref{driftdiff} in a distributional--rather than renormalized--sense.

\section{A priori estimates}

The study of the asymptotic $\varepsilon\rightarrow 0$ requires estimates that are uniform with respect to $\varepsilon$. 
For our coupled system, the only natural identities providing such bounds are mass estimates (see Lemma \ref{lpest}), free energy and entropy inequalities (see Proposition \ref{freeenerg}). 
Let us introduce kinetic energies associated with each species
\[
K^\varepsilon(t) = K^\varepsilon_i(t) + K^\varepsilon_e(t)\text{ where }K_{\alpha} = \frac{1}{2}\iint_{\mathbb{R}^6}|v|^2f^\varepsilon_{\alpha} dv dx \quad\forall\alpha\in\{i,e\}.
\]
The characteristic energy due to electrostatic effects is called electric energy and reads
\[
 E^\varepsilon(t) = \frac{1}{2}\int_{\mathbb{R}^3}|\nabla_x\phi^\varepsilon|^2 dx = \frac{1}{2}\int_{\mathbb{R}^3}\phi^\varepsilon n^\varepsilon dx,
\]
where the last equality stems from the Poisson equation. 
Let us also define the entropy of each species 
\[
S_{\alpha}^\varepsilon(t) = \iint_{\mathbb{R}^6} f_{\alpha}^\varepsilon\ln(f_{\alpha}^\varepsilon) dv dx \quad\forall\alpha\in\{i,e\}.
\]
The natural energy associated with Vlasov-Fokker-Planck type equations is called the free energy and writes, for our system
\[
 U^\varepsilon(t) = E^\varepsilon(t) + K^\varepsilon(t) + S_e^\varepsilon(t).
\]
We also introduce the free energy dissipation given by the following non-negative quantity
\[
 D^\varepsilon(t) = \frac{1}{\varepsilon^2}\iint_{\mathbb{R}^6} \frac{1}{f^\varepsilon_e}|vf^\varepsilon_e+\nabla_vf^\varepsilon_e|^2 dv dx = \frac{4}{\varepsilon^2}\iint_{\mathbb{R}^6} \left|\nabla_v\sqrt{\frac{f_e^\varepsilon}{M}}\right|^2Mdv dx
\]

\label{apriori}
\begin{prop}[Free energy and entropy estimates]
Suppose that $(f_i^\varepsilon, f_e^\varepsilon, \phi^\varepsilon)$ is a smooth localized solution of \eqref{Vi-VFPe-P}. One has the following ''entropy'' estimates, for all $\varepsilon>0$ and $t\in[0,T)$.
\begin{itemize}
 \item The free energy satisfies
 \[
  U^\varepsilon(t) + \int_0^tD^\varepsilon(s)ds = U(0).
 \]
 \item The ion entropy satisfies 
 \[
  S^\varepsilon_i(t) = S^\varepsilon_i(0)
 \]
 \end{itemize}
 \label{freeenerg}
\end{prop}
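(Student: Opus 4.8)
The plan is to prove both identities by the textbook device of testing the kinetic equations against well chosen weights, the whole point being to track which terms survive. Two elementary algebraic facts about the Lorentz force do the heavy lifting: it is orthogonal to velocity, $(v\wedge B)\cdot v=0$, and it is divergence free in velocity, $\nabla_v\cdot(v\wedge B)=0$. Since we work with smooth, rapidly decaying solutions, all the integrations by parts below are licit and all boundary terms at infinity vanish.

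\textbf{Step 1 (ion entropy and continuity equations).} Multiplying the ion equation in \eqref{Vi-VFPe-P} by $1+\ln f_i^\varepsilon$ and integrating over $\mathbb{R}^6$: the transport term $v\cdot\nabla_x f_i^\varepsilon$ and the force term $(-\nabla_x\phi^\varepsilon+v\wedge B)\cdot\nabla_v f_i^\varepsilon$ turn into the divergence in $(x,v)$ of $f_i^\varepsilon\ln f_i^\varepsilon$ times a vector field that is itself divergence free (here $\nabla_v\cdot(v\wedge B)=0$ is used); they therefore integrate to zero and $\tfrac{d}{dt}S_i^\varepsilon=0$. Integrating instead the two kinetic equations against $1$ in $v$, all force terms and the Fokker–Planck operator are $v$-divergences and vanish, leaving the continuity equations $\partial_t n_i^\varepsilon+\nabla_x\cdot j_i^\varepsilon=0$ and $\partial_t n_e^\varepsilon+\nabla_x\cdot j_e^\varepsilon=0$, which we record for the energy computation.

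\textbf{Step 2 (the three blocks of $U^\varepsilon$).} Writing $E^\varepsilon=\tfrac12\int\phi^\varepsilon n^\varepsilon\,dx$ and using that $\Phi$ is an even kernel, $\tfrac{d}{dt}E^\varepsilon=\int\phi^\varepsilon\partial_t n^\varepsilon\,dx=-\int\phi^\varepsilon\nabla_x\cdot j^\varepsilon\,dx=\int\nabla_x\phi^\varepsilon\cdot j^\varepsilon\,dx$ with $j^\varepsilon=j_i^\varepsilon-j_e^\varepsilon$, by Step 1. Testing the ion equation against $|v|^2/2$ gives $\tfrac{d}{dt}K_i^\varepsilon=-\int\nabla_x\phi^\varepsilon\cdot j_i^\varepsilon\,dx$, the magnetic contribution dropping because $(v\wedge B)\cdot v=0$. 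Testing the electron equation against $|v|^2/2$: the magnetic term drops for the same reason, the electric force yields $+\varepsilon\int\nabla_x\phi^\varepsilon\cdot j_e^\varepsilon\,dx$ since $\int v f_e^\varepsilon\,dv=\varepsilon j_e^\varepsilon$, and $\mathcal{L}_I$ yields $-\tfrac1\varepsilon\big(2K_e^\varepsilon-3\iint_{\mathbb{R}^6}f_e^\varepsilon\,dv\,dx\big)$; dividing by $\varepsilon$ one gets $\tfrac{d}{dt}K_e^\varepsilon=\int\nabla_x\phi^\varepsilon\cdot j_e^\varepsilon\,dx-\tfrac1{\varepsilon^2}\big(2K_e^\varepsilon-3\iint f_e^\varepsilon\,dv\,dx\big)$. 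Testing the electron equation against $1+\ln f_e^\varepsilon$: the transport and force terms vanish by the divergence structure (again $\nabla_v\cdot(v\wedge B)=0$), while $\mathcal{L}_I$ gives $\tfrac1\varepsilon\big(3\iint f_e^\varepsilon\,dv\,dx-\iint \tfrac{|\nabla_v f_e^\varepsilon|^2}{f_e^\varepsilon}\,dv\,dx\big)$, so $\tfrac{d}{dt}S_e^\varepsilon=\tfrac1{\varepsilon^2}\big(3\iint f_e^\varepsilon\,dv\,dx-\iint\tfrac{|\nabla_v f_e^\varepsilon|^2}{f_e^\varepsilon}\,dv\,dx\big)$.

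\textbf{Step 3 (assembly and the key cancellation).} Expanding the square, $\varepsilon^2 D^\varepsilon=\iint\tfrac{|v f_e^\varepsilon+\nabla_v f_e^\varepsilon|^2}{f_e^\varepsilon}\,dv\,dx=2K_e^\varepsilon-6\iint f_e^\varepsilon\,dv\,dx+\iint\tfrac{|\nabla_v f_e^\varepsilon|^2}{f_e^\varepsilon}\,dv\,dx$, the cross term being $2\iint v\cdot\nabla_v f_e^\varepsilon\,dv\,dx=-6\iint f_e^\varepsilon\,dv\,dx$. Adding the electron kinetic and entropy contributions of Step 2, every $K_e^\varepsilon$ term and every mass term cancels and one is left with exactly $\tfrac{d}{dt}(K_e^\varepsilon+S_e^\varepsilon)=\int\nabla_x\phi^\varepsilon\cdot j_e^\varepsilon\,dx-D^\varepsilon$. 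Adding $\tfrac{d}{dt}K_i^\varepsilon$ and $\tfrac{d}{dt}E^\varepsilon$, the three current--field integrals telescope, $\int\nabla_x\phi^\varepsilon\cdot(j_i^\varepsilon-j_e^\varepsilon-j_i^\varepsilon+j_e^\varepsilon)\,dx=0$, and $\tfrac{d}{dt}U^\varepsilon=-D^\varepsilon$; integration in time gives the free energy identity. I expect this last bookkeeping to be the only delicate point: one must verify that the $1/\varepsilon^2$-singular pieces produced by the friction part of $\mathcal{L}_I$, separately in the kinetic energy and in the entropy, recombine precisely into $-D^\varepsilon$, and that the constant (mass) terms cancel — whereas the magnetic field, although of size $1/\varepsilon$, contributes nothing thanks to the two orthogonality/divergence identities. (If one is reluctant to assume $f_e^\varepsilon>0$, the entropy computation is first performed with $f_e^\varepsilon+\kappa M$ in place of $f_e^\varepsilon$ and $\kappa\to0$ is taken at the end, which is harmless as everything is monotone or explicit in $\kappa$; this is anyway a formal identity whose role is to motivate the rigorous uniform bounds.)
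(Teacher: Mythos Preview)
Your proof is correct and follows essentially the same route as the paper: test the kinetic equations against $|v|^2/2$ and $1+\ln f_\alpha^\varepsilon$, use the continuity equation together with Poisson to handle the electric energy, and observe that the magnetic contributions vanish by $(v\wedge B)\cdot v=0$ and $\nabla_v\cdot(v\wedge B)=0$. The only cosmetic difference is that the paper keeps the Fokker--Planck contributions grouped as $-\tfrac{1}{\varepsilon^2}\iint v\cdot(vf_e^\varepsilon+\nabla_v f_e^\varepsilon)$ and $-\tfrac{1}{\varepsilon^2}\iint \tfrac{\nabla_v f_e^\varepsilon}{f_e^\varepsilon}\cdot(vf_e^\varepsilon+\nabla_v f_e^\varepsilon)$, which sum directly to $-D^\varepsilon$, whereas you expand each into mass, kinetic energy, and Fisher information pieces before reassembling.
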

\begin{proof}
Multiplying the first two equations of \eqref{Vi-VFPe-P} by $|v|^2/2$ and $|v|^2/(2\varepsilon)$ respectively, integrating in $v,x$ and summing the two equations yields, up to an integration by parts,
\begin{equation}
  \dfrac{d}{dt}K^\varepsilon + \int\nabla_x\phi^\varepsilon\cdot j^\varepsilon dx = -\frac{1}{\varepsilon^2}\iint v\cdot(vf_e^\varepsilon + \nabla_vf^\varepsilon_e)dv dx
  \label{2}
\end{equation}

The continuity equation is obtained by integrating the fist two equations of \eqref{Vi-VFPe-P} with respect to $v,x$ and summing the resulting equations after dividing the electron equation by $\varepsilon$.
It reads
\begin{equation}
 \partial_t n^\varepsilon + \nabla_x\cdot j^\varepsilon = 0
 \label{CE2}
\end{equation}
Then using \eqref{CE2} and the Poisson equation, we can rewrite the second term  in \eqref{2}
\begin{equation}
 \int\nabla_x\phi^\varepsilon\cdot j^\varepsilon dx = \frac{1}{2}\dfrac{d}{dt}\int|\nabla_x\phi^\varepsilon|^2dx\\
 \label{3}
\end{equation}
Hence we get an energy estimate from \eqref{2} and \eqref{3}, namely
\begin{equation}
\frac{d}{dt}(K^\varepsilon + E^\varepsilon) = -\frac{1}{\varepsilon^2}\iint v\cdot(vf_e^\varepsilon + \nabla_vf^\varepsilon_e)dv dx
\label{energ}
\end{equation}

The entropy equations are obtained by multiplying the first two equations of \eqref{Vi-VFPe-P} by $\ln f^\varepsilon_i + 1 $ and $\ln f^\varepsilon_e + 1$ respectively and integrating in $x,v$. It yields
\begin{align}
 &\dfrac{d}{dt}S_i^\varepsilon = 0,\label{4a}\\
 &\varepsilon\dfrac{d}{dt}S_e^\varepsilon = -\frac{1}{\varepsilon}\iint \frac{\nabla_vf^\varepsilon_e}{f^\varepsilon_e}\cdot (vf_e^\varepsilon + \nabla_vf_e^\varepsilon) dv dx\label{4b},
\end{align}
where we performed an integration by part of the right-hand side of the second equation. Equation \eqref{4a} provides the ion entropy estimate. 
 By summing \eqref{4b} divided by $\varepsilon$ and \eqref{energ}, we obtain the announced estimate up to an integration in time.
\end{proof}

The other type of natural a priori estimate for Vlasov-type equations is the conservation of $L^p$ norms.
\begin{lem}[$L^p$ norms]
Suppose that $(f_i^\varepsilon, f_e^\varepsilon, \phi^\varepsilon)$ is a smooth localized solution of \eqref{Vi-VFPe-P}. For all $\varepsilon>0$ and $t\in[0,T)$,
\begin{itemize}
\item The distribution function of electrons satisfies
\[
 \|f^\varepsilon_e(t,\cdot,\cdot)\|_{L^1_{x,v}} = \|f^\text{in}_e\|_{L^1_{x,v}} 
\]
\item The distribution function of ions satisfies
\[
 \|f^\varepsilon_i(t,\cdot,\cdot)\|_{L^p_{x,v}} = \|f^\text{in}_i\|_{L^p_{x,v}}, \quad \forall p\in[1,\infty]
\]
\end{itemize}
\label{lpest}
\end{lem}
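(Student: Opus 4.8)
The plan is to exploit the divergence-free structure of the phase-space transport operators appearing in both kinetic equations of \eqref{Vi-VFPe-P}. For the transport field acting on $f_i^\varepsilon$, namely $(v,\,-\nabla_x\phi^\varepsilon + v\wedge B)$ as a vector field on $\mathbb{R}^6$, one has $\nabla_x\cdot v = 0$, then $\nabla_v\cdot(-\nabla_x\phi^\varepsilon) = 0$ because $\nabla_x\phi^\varepsilon$ is independent of $v$, and finally $\nabla_v\cdot(v\wedge B) = 0$ since $v\mapsto v\wedge B$ is linear with antisymmetric (hence trace-free) Jacobian. The very same computation shows that the transport part of the electron equation is also in divergence form. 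Consequently, integrating any smooth function of the unknown against these operators over $\mathbb{R}^6$ produces only boundary terms, which vanish for smooth localized solutions.

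First I would treat the ions. Since the transport field is divergence-free, for any $\beta\in\mathcal{C}^1(\mathbb{R}_+)$ the function $\beta(f_i^\varepsilon)$ solves exactly the same transport equation as $f_i^\varepsilon$ (smoothness makes the chain rule legitimate), so that $\frac{d}{dt}\iint_{\mathbb{R}^6}\beta(f_i^\varepsilon)\,dv\,dx = 0$. Choosing $\beta(u) = u^p$ and using that $f_i^\varepsilon\geq 0$ --- a property propagated from $f_i^\text{in}\geq 0$ along the characteristic flow --- gives $\|f_i^\varepsilon(t)\|_{L^p_{x,v}} = \|f_i^\text{in}\|_{L^p_{x,v}}$ for every $p\in[1,\infty)$. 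The endpoint $p=\infty$ follows either by letting $p\to\infty$ in this identity, or more directly from the fact that $f_i^\varepsilon$ is constant along the characteristics, which form a measure-preserving flow, so that its essential supremum is preserved.

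For the electrons I would simply integrate the second equation of \eqref{Vi-VFPe-P} over $(x,v)\in\mathbb{R}^6$. The transport terms integrate to zero by the divergence-free property and integration by parts, and the collision term contributes nothing either, since $\iint_{\mathbb{R}^6}\mathcal{L}_{I}(f_e^\varepsilon)\,dv\,dx = \iint_{\mathbb{R}^6}\nabla_v\cdot(vf_e^\varepsilon+\nabla_vf_e^\varepsilon)\,dv\,dx = 0$. Hence $\varepsilon\frac{d}{dt}\iint_{\mathbb{R}^6}f_e^\varepsilon\,dv\,dx = 0$, and combined with $f_e^\varepsilon\geq 0$ this yields $\|f_e^\varepsilon(t)\|_{L^1_{x,v}} = \|f_e^\text{in}\|_{L^1_{x,v}}$ after integrating in time.

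At the level of smooth localized solutions there is no genuine obstacle: all the integrations by parts and chain-rule manipulations above are licit, and the statement for the actual weak/renormalized solutions then follows from the approximation scheme discussed in Section \ref{existencee}. The one point worth emphasizing is that, unlike for the ions, higher $L^p$ norms of $f_e^\varepsilon$ are \emph{not} conserved --- only the $L^1$ norm is --- precisely because $\iint_{\mathbb{R}^6}\mathcal{L}_{I}(f_e^\varepsilon)\beta'(f_e^\varepsilon)\,dv\,dx$ has a definite sign for nonlinear $\beta$, so the renormalization argument that works for the ions breaks down for the electrons. This is consistent with the discussion in the introduction on the impossibility of a uniform $L^p_{x,v}$ bound for $f_e^\varepsilon$ when $p>1$.
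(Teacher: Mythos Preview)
Your proof is correct and follows essentially the same approach as the paper: for the ions you multiply by $(f_i^\varepsilon)^{p-1}$ (equivalently, renormalize with $\beta(u)=u^p$) and use the divergence-free structure of the transport field to get $\frac{d}{dt}\|f_i^\varepsilon\|_{L^p_{x,v}}^p=0$, then pass to the limit $p\to\infty$; for the electrons you integrate the equation directly and observe that the Fokker-Planck term has zero mean. Your additional remarks on the divergence-free computation and on why the argument fails for higher $L^p$ norms of $f_e^\varepsilon$ are correct and add context, but the underlying proof is the same as the paper's.
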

\begin{proof}
Keeping in mind that distribution functions are positive, the integration of the Vlasov-Fokker-Planck equation in \eqref{Vi-VFPe-P} with respect to $x$ and $v$ provides the first estimate. 
Let us multiply the ion equation of \eqref{Vi-VFPe-P} by $\left({f^\varepsilon_i}\right)^{p-1}/p$ and integrate in $x$ and $v$ to get \[\dfrac{d}{dt}\|f^\varepsilon_i\|_{L^p_{x,v}}^p = 0.\]
Hence $\|f^\varepsilon_i\|_{L^p_{x,v}}$ is constant. Letting $p$ go to infinity gives the limit case. 
\end{proof}

As we are working on an unbounded domain in space, we need to control space moments of the distribution functions to ensure that no mass can be ''lost'' at infinity.
It reduces to controlling current densities as shows the following estimate. 

\begin{lem}[First moment in space]
Suppose that $(f_i^\varepsilon, f_e^\varepsilon, \phi^\varepsilon)$ is a smooth localized solution of \eqref{Vi-VFPe-P}. For all $\varepsilon>0$, $t\in[0,T)$ and $\alpha\in\{i,e\}$
\[
 \iint_{\mathbb{R}^6} |x|f^\varepsilon_{\alpha} dv dx = \int_0^t\int_{\mathbb{R}^3}\frac{x}{|x|}\cdot j^\varepsilon_{\alpha} dx + \iint_{\mathbb{R}^6} |x|f^\text{in}_{\alpha} dv dx
\]
\end{lem}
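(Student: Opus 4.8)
The plan is to test the kinetic equations against the weight $w(x)=|x|$ and integrate over phase space, using that every term carrying a $v$-derivative integrates to zero and that the transport term produces exactly the current density.

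First, for $\alpha=i$, multiply the first equation of \eqref{Vi-VFPe-P} by $|x|$ and integrate in $(x,v)\in\mathbb{R}^6$. Since $|x|$ is independent of $v$, an integration by parts in $v$ annihilates both force terms $-\nabla_x\phi^\varepsilon\cdot\nabla_vf_i^\varepsilon$ and $(v\wedge B)\cdot\nabla_vf_i^\varepsilon$. The free transport term, after an integration by parts in $x$, contributes $\iint_{\mathbb{R}^6}\tfrac{x}{|x|}\cdot v\,f_i^\varepsilon\,dv\,dx=\int_{\mathbb{R}^3}\tfrac{x}{|x|}\cdot j_i^\varepsilon\,dx$, using $j_i^\varepsilon=\int v f_i^\varepsilon\,dv$. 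Hence $\tfrac{d}{dt}\iint_{\mathbb{R}^6}|x|f_i^\varepsilon\,dv\,dx=\int_{\mathbb{R}^3}\tfrac{x}{|x|}\cdot j_i^\varepsilon\,dx$, and integrating in time gives the claimed identity for ions. For $\alpha=e$, the same manipulation applied to the Vlasov--Fokker--Planck equation additionally uses $\iint_{\mathbb{R}^6}|x|\,\mathcal{L}_{I}(f_e^\varepsilon)\,dv\,dx=0$, which holds because $\mathcal{L}_{I}$ is a pure divergence in $v$ and $|x|$ does not depend on $v$; the $\tfrac{1}{\varepsilon}(v\wedge B)\cdot\nabla_vf_e^\varepsilon$ term likewise vanishes. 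Tracking the $\varepsilon$-prefactors in \eqref{Vi-VFPe-P} yields $\varepsilon\tfrac{d}{dt}\iint_{\mathbb{R}^6}|x|f_e^\varepsilon\,dv\,dx=\iint_{\mathbb{R}^6}\tfrac{x}{|x|}\cdot v f_e^\varepsilon\,dv\,dx=\varepsilon\int_{\mathbb{R}^3}\tfrac{x}{|x|}\cdot j_e^\varepsilon\,dx$, where the last step uses $j_e^\varepsilon=\tfrac{1}{\varepsilon}\int v f_e^\varepsilon\,dv$. Dividing by $\varepsilon$ and integrating in time gives the identity for electrons.

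The only point that requires a little care is that $w(x)=|x|$ is merely Lipschitz --- not $\mathcal{C}^1$ at the origin --- and grows at infinity. The growth is harmless because we work with smooth \emph{localized}, hence compactly supported, solutions, so all integrals are finite and no boundary terms appear at infinity. The lack of smoothness at $x=0$ is not a genuine obstruction either: $w\in W^{1,\infty}_\text{loc}(\mathbb{R}^3)$ with $\nabla_x w=x/|x|$ almost everywhere, which suffices to legitimate the integration by parts in $x$ against a smooth compactly supported density. If one prefers to avoid even this, one runs the argument with the regularized weight $w_\sigma(x)=\sqrt{|x|^2+\sigma^2}-\sigma$, whose gradient $x/\sqrt{|x|^2+\sigma^2}$ is smooth and bounded by $1$, derives the corresponding identity, and then lets $\sigma\to0$ by dominated convergence, using $0\le w_\sigma\le|x|$, $|\nabla_x w_\sigma|\le1$ and the already established finiteness of the space moments and of the currents for localized solutions. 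This mild regularization step is the one I would single out as the main technical subtlety; everything else is a bookkeeping of the $\varepsilon$-weights.
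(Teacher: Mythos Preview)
Your proof is correct and follows exactly the approach of the paper, which simply instructs to multiply the two kinetic equations by $|x|$ and integrate in $x$, $v$ and $t$. You have merely spelled out the integrations by parts, the handling of the $\varepsilon$-prefactors in the electron equation, and the (optional) regularization of $|x|$ near the origin, none of which the paper makes explicit.
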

\begin{proof}
 Multiply the first two equations of \eqref{Vi-VFPe-P} by $|x|$ and integrate in $x$, $v$ and $t$ to obtain the result.
 
\end{proof}

\section{Existence of solutions and uniform in $\varepsilon$ estimates}
\label{existencee}

In this section, we give an existence result for \eqref{Vi-VFPe-P}. 
The a priori estimates of the previous section are necessary to build these solutions, by a mollification procedure. 
Let us mention that this result follows from single-species cases. 
Indeed the coupling between the kinetic equations of \eqref{Vi-VFPe-P} is weak in the sense that, because of the form of the Poisson equation, 
it is possible to isolate the contribution of each species in the electric field $\nabla_x\phi^\varepsilon$. 
The addition of the magnetic field term only cause minor and harmless modifications to usual proofs as it is linear and does not alter a priori estimates.
For the Vlasov-Poisson part, the theory of Arsenev may be applied. We refer to \cite[Theorem 1.3 and 1.4]{bouchut_2000_kinetic} for details. 
The Vlasov-Poisson-Fokker-Planck part of \eqref{Vi-VFPe-P} may be handled with the DiPerna-Lions theory of renormalized solutions \cite{diperna_1988_global, di_1988_solutions}. 
Some technical details may also be found in \cite{bouchut_1995_on} and \cite{masmoudi_2007_diffusion, mischler_2000_on, mischler_2010_kinetic} on bounded domains.
\begin{prop}
 Under assumptions \eqref{H0} to \eqref{H3}, the system \eqref{Vi-VFPe-P} admits a solution in the sense of Definition \ref{D1}. 
 In particular, the solution satisfies the continuity equations in the sense of distributions on $[0,T)\times\mathbb{R}^6$
 \begin{equation}
  \partial_t n^\varepsilon_{\alpha} +  \nabla_x\cdot j^\varepsilon_{\alpha} = 0,\quad\forall\alpha\in\{i,e\}
  \label{CE}
 \end{equation}
 Moreover, the following estimates hold, uniformly in $t\in[0,T)$, $\varepsilon>0$ and $\alpha\in\{i,e\}$,
 \begin{align}
  &U^\varepsilon(t) + \int_0^t D^\varepsilon(s)ds \leq U^\text{in},\label{E1}\\
  &S_i^\varepsilon(t) \leq S_i^\text{in},\label{E1bis}\\
  &\|f^\varepsilon_e(t,\cdot,\cdot)\|_{L^1_{x,v}} \leq \|f^\text{in}_e\|_{L^1_{x,v}},\label{E2}\\
  &\forall  p\in[1,\infty], \quad\|f^\varepsilon_e(t,\cdot,\cdot)\|_{L^p_{x,v}} \leq \|f^\text{in}_e\|_{L^p_{x,v}},\label{E3}\\
  &\iint_{\mathbb{R}^6} |x|f^\varepsilon_{\alpha} dv dx \leq \int_0^t\int_{\mathbb{R}^3}\frac{x}{|x|}\cdot j^\varepsilon_{\alpha} dx + \iint_{\mathbb{R}^6} |x|f^\text{in}_{\alpha} dv dx.\label{Ex}
 \end{align}
 and the distribution functions are non-negative almost everywhere.
 \label{existence}
\end{prop}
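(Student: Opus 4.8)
The plan follows the remark preceding the statement: the coupling between the two species is weak, since the Poisson equation lets one write $\phi^\varepsilon=\phi_i^\varepsilon-\phi_e^\varepsilon$ with $-\Delta_x\phi_\alpha^\varepsilon=\pm n_\alpha^\varepsilon$, so each transport equation only feels a field that is a smoothing (order $+2$) functional of the densities, and the linear Lorentz term affects neither the a priori estimates nor the construction. One therefore builds solutions by a standard regularisation and fixed-point scheme --- invoking, if one wishes, the single-species theories of Arsenev for Vlasov--Poisson and of DiPerna--Lions for Vlasov--Poisson--Fokker--Planck at each step of the coupling --- and recovers the uniform bounds \eqref{E1}--\eqref{Ex} by making rigorous the formal computations of Section~\ref{apriori}. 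Throughout, $\varepsilon>0$ is fixed; the uniformity in $\varepsilon$ is then automatic since those computations have an $\varepsilon$-independent form.

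\textbf{Step 1: regularised problem.} Mollify the initial data $f_\alpha^{\mathrm{in}}$ (truncating large $|v|$ if convenient) and replace $\nabla_x\phi^\varepsilon$ by $\nabla_x\phi^\varepsilon\ast\chi_\kappa$ in the two transport terms, keeping the Poisson coupling unchanged. For fixed $\kappa$ one constructs $(f_i^{\varepsilon,\kappa},f_e^{\varepsilon,\kappa},\phi^{\varepsilon,\kappa})$ by a Banach fixed point on a short time interval: given a candidate pair of densities, the ion equation is a linear transport equation with a smooth globally Lipschitz field, solved by characteristics and preserving all $L^p_{x,v}$ norms; the electron equation is a linear hypoelliptic Fokker--Planck equation with smooth coefficients, solved by parabolic theory with a nonnegative, rapidly decaying solution; the Poisson equation returns a new smooth potential. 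A contraction estimate gives local existence, and the bounds below globalise it. Because these solutions are smooth and well localised, Proposition~\ref{freeenerg}, Lemma~\ref{lpest} and the first-moment estimate apply up to mollifier errors that vanish as $\kappa\to0$, producing \eqref{E1}--\eqref{Ex} with constants depending only on $f_\alpha^{\mathrm{in}}$ through \eqref{H0}--\eqref{H3}. The one subtlety is the classical circular dependence --- the kinetic energy controls the negative part of the entropy, needed for the free energy identity to be meaningful, while the first space moment also enters that lower bound --- which one closes by a Gr\"onwall argument, using $\int_0^t\!\int|j_\alpha^{\varepsilon,\kappa}|\,dx\,ds\le\|f_\alpha^{\mathrm{in}}\|_{L^1_{x,v}}^{1/2}\int_0^t(2K_\alpha^{\varepsilon,\kappa}(s))^{1/2}\,ds$ by Cauchy--Schwarz.

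\textbf{Step 2: compactness and limit $\kappa\to0$.} From Step~1, $f_i^{\varepsilon,\kappa}$ is weak-$\star$ relatively compact in $L^\infty(0,T;L^1_{x,v}\cap L^\infty_{x,v})$ with uniformly controlled space moments; $f_e^{\varepsilon,\kappa}$ is bounded in $L^\infty(0,T;L^1_{x,v})$ and, by the entropy and moment bounds, uniformly equi-integrable, hence weakly relatively compact in $L^1$ (Dunford--Pettis); and $\phi^{\varepsilon,\kappa}$ is bounded in $L^\infty(0,T;\dot H^1_x)$. Velocity-averaging lemmas applied to the two kinetic equations give strong $L^p_{\mathrm{loc}}$ compactness of $n_i^{\varepsilon,\kappa}$ and $n_e^{\varepsilon,\kappa}$, hence of $n^{\varepsilon,\kappa}$, and elliptic regularity yields strong convergence of $\nabla_x\phi^{\varepsilon,\kappa}$ in $L^2_{\mathrm{loc}}$. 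For the ion equation the product $f_i^{\varepsilon,\kappa}\nabla_x\phi^{\varepsilon,\kappa}$ passes to the limit (weak-$\star$ times strong), recovering the Vlasov equation and, after integration in $v$, the ion continuity equation in \eqref{CE}. For the electron equation I follow DiPerna--Lions: the $L^2$ bound on $\nabla_v\sqrt{f_e^{\varepsilon,\kappa}}$ from \eqref{E1}, combined with averaging, upgrades the weak $L^1$ compactness of $f_e^{\varepsilon,\kappa}$ to strong $L^1_{\mathrm{loc}}$ compactness, hence a.e.\ convergence; one then passes to the limit in the renormalised identity \eqref{ebeta} --- transport, magnetic and collision terms being controlled by a.e.\ convergence and the uniform $L^2$ bound on $\nabla_v\beta(f_e^{\varepsilon,\kappa})=2\sqrt{f_e^{\varepsilon,\kappa}}\,\beta'(f_e^{\varepsilon,\kappa})\,\nabla_v\sqrt{f_e^{\varepsilon,\kappa}}$ --- and likewise in the auxiliary equation \eqref{etheta} for $\theta_{\varepsilon,\lambda}=\sqrt{f_e^{\varepsilon,\kappa}+\lambda M}$, obtained by renormalising the equation for $f_e^{\varepsilon,\kappa}/M$ with $s\mapsto\sqrt{s+\lambda}$. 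Integrating \eqref{ebeta} against $\beta_\delta\to\mathrm{id}$ and using $\nabla_v\cdot(v\wedge B)=0$ yields the electron continuity equation. Finally all of \eqref{E1}--\eqref{Ex} survive the limit: lower semicontinuity of the $L^1$, $L^p$ and $\dot H^1$ norms and of the convex entropy functional, Fatou for $\int_0^t D^\varepsilon$, and, for the space moment, equi-integrability of $vf_\alpha^{\varepsilon,\kappa}$ giving $j_\alpha^{\varepsilon,\kappa}\rightharpoonup j_\alpha$ in $L^1_{\mathrm{loc}}$; the continuity-in-time requirements of Definition~\ref{D1} then follow from the equations once the fluxes are known to be locally integrable.

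\textbf{Main obstacle.} The delicate point is Step~2 for the electron equation: getting \emph{strong} compactness of $f_e^{\varepsilon,\kappa}$ via the DiPerna--Lions coupling of velocity averaging with the $L^2$ control of $\nabla_v\sqrt{f_e^{\varepsilon,\kappa}}$ (so that the limits of $\beta(f_e^{\varepsilon,\kappa})$ and of the products with $\nabla_x\phi^{\varepsilon,\kappa}$ can be identified), and at the same time securing the auxiliary equation \eqref{etheta}, where the extra $\lambda M$ term is precisely what makes division by $\theta_{\varepsilon,\lambda}$ harmless; everything else is a routine, if lengthy, regularisation argument resting on Section~\ref{apriori}.
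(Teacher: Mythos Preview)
Your proposal is correct and follows essentially the same route as the paper: the paper does not give a detailed proof of this proposition but only a brief sketch (mollification, reduction to the single-species theories of Arsenev and DiPerna--Lions via the weak Poisson coupling, harmlessness of the linear magnetic term), and your Steps~1--2 are precisely a fleshed-out version of that sketch, with the a~priori identities of Section~\ref{apriori} passed to the limit by lower semicontinuity and Fatou. The only minor remark is that your Cauchy--Schwarz bound on $\int|j_e^{\varepsilon,\kappa}|$ carries a factor $1/\varepsilon$ (recall $j_e^\varepsilon=\varepsilon^{-1}\!\int vf_e^\varepsilon\,dv$), but since you correctly treat $\varepsilon$ as fixed in the construction this is harmless; the $\varepsilon$-uniform control of $j_e^\varepsilon$ through the dissipation is only needed later, in Proposition~\ref{corestimates}.
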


From estimates obtained in Proposition \ref{existence} we infer uniform in $\varepsilon$ estimates that will allow us to take limits in the following sections.
Let us first give a name to the particular solutions satisfying these estimates. 
\begin{defi}
Any triplet $(f^\varepsilon_i, f^\varepsilon_e, \phi^\varepsilon)$ which is a solution of the system \eqref{Vi-VFPe-P} in the sense of Definition \ref{D1}, 
associated with initial datum satisfying \eqref{H0} to \eqref{H3}, and itself satisfying estimates of Proposition \ref{existence} is called from now on a \emph{physical solution of \eqref{Vi-VFPe-P}}.
\end{defi}
\begin{prop}
A physical solution of \eqref{Vi-VFPe-P} satisfies the following properties
\begin{enumerate}
 \item[\textbf{(a)}] Control of current densities:
 \begin{align}
  &\|j_i^\varepsilon\|_{L^1_{x}}\leq C(T) + K_i^\varepsilon,\label{estimji}\\ &\|j_e^\varepsilon\|_{L^1_{x}}\leq C(T) + \frac{1}{2}D^\varepsilon.\label{estimje}
 \end{align}
\end{enumerate}
\begin{enumerate}
 \item[\textbf{(b)}] Uniform bounds on the free energy and moments:
 \begin{equation}\sum_{\alpha\in\{i,e\}}\iint_{\mathbb{R}^6}f^\varepsilon_\alpha(|x|+|v|^2 +|\ln f^\varepsilon_\alpha|)dv dx\ + \int_{\mathbb{R}^3}|\nabla_x\phi^\varepsilon|^2 dx + \int_0^tD^\varepsilon(s)ds \leq C(T)
  \label{E4}
 \end{equation}

 \item[\textbf{(c)}] Consequences of (b):
 \begin{equation}
 \begin{aligned}
  &\|j_i^\varepsilon\|_{L^1(0,T;L^1_{x})}\ +\  \|j_e^\varepsilon\|_{L^1(0,T;L^1_{x})}\ +\ 
  \|\nabla_v\sqrt{f^\varepsilon_e} \|_{L^2(0,T;L^2_{x,v})}\\& + \frac{1}{\varepsilon}\|(v\wedge B)\cdot\nabla_vf^\varepsilon_e \|_{L^1(0,T;L^1_{x,v})}\leq C(T)
  \end{aligned}
  \label{E5}
 \end{equation}
\label{estimates}
\end{enumerate}
for some constant $C(T)$ independent of $\varepsilon$ and $t\in[0,T)$ 
\label{corestimates}
\end{prop}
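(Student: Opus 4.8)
The plan is to derive everything from the two families of a priori bounds collected in Proposition \ref{existence}: the free-energy/entropy inequality \eqref{E1}--\eqref{E1bis} together with the mass bounds \eqref{E2}--\eqref{E3} and the space-moment identity \eqref{Ex}. The logical order is (a) $\Rightarrow$ (b) $\Rightarrow$ (c), because the pointwise-in-time current bounds in (a) are what upgrade the moment identity \eqref{Ex} into a genuine uniform bound on $\iint |x| f^\varepsilon_\alpha$, which is the only piece of \eqref{E4} not already present in \eqref{E1}.

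First I would prove (a). For the ion current I split $|v| f^\varepsilon_i \leq \tfrac12(1+|v|^2)f^\varepsilon_i$ and integrate: the $|v|^2$ part is $2K_i^\varepsilon$ (up to the constant) and the mass part is $\|f^\text{in}_i\|_{L^1}$, a fixed constant, which gives \eqref{estimji} with $C(T)$ depending only on the data. For the electron current the delicate point is the extra $1/\varepsilon$ in the definition $j_e^\varepsilon = \tfrac1\varepsilon\int v f^\varepsilon_e\,dv$; here one writes
\[
 j_e^\varepsilon = \frac1\varepsilon \int v f^\varepsilon_e \, dv = \frac1\varepsilon \int \Bigl(v f^\varepsilon_e + \nabla_v f^\varepsilon_e\Bigr)\, dv,
\]
since $\int \nabla_v f^\varepsilon_e \, dv = 0$, and then by Cauchy--Schwarz,
\[
 |j_e^\varepsilon| \leq \frac1\varepsilon \left( \int f^\varepsilon_e \, dv \right)^{1/2} \left( \int \frac{1}{f^\varepsilon_e}\bigl| v f^\varepsilon_e + \nabla_v f^\varepsilon_e \bigr|^2 dv \right)^{1/2}.
\]
Integrating in $x$, applying Cauchy--Schwarz once more in $x$ and using $2ab \leq a^2 + b^2$ converts this into $\tfrac12 \|f^\varepsilon_e\|_{L^1_{x,v}} + \tfrac{1}{2\varepsilon^2}\iint \tfrac{1}{f^\varepsilon_e}|vf^\varepsilon_e + \nabla_v f^\varepsilon_e|^2 \, dv\,dx$, i.e. $C(T) + \tfrac12 D^\varepsilon$ after invoking \eqref{E2}. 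This is the structurally important step and where the peculiar scaling of the problem is used — the friction term $v f^\varepsilon_e$ inside the dissipation is exactly what is needed to absorb the $1/\varepsilon$.

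For (b): the free energy $U^\varepsilon$ controls $E^\varepsilon$, $K^\varepsilon$ and the electron entropy $S_e^\varepsilon$, but entropy is not sign-definite, so one uses the standard trick of comparing $f^\varepsilon_e \ln f^\varepsilon_e$ against the Maxwellian: $\iint f^\varepsilon_e |\ln f^\varepsilon_e| \leq S_e^\varepsilon + 2\iint f^\varepsilon_e \tfrac{|v|^2}{2} + C\|f^\varepsilon_e\|_{L^1} + (\text{space-moment contribution})$, using the elementary inequality $f|\ln f| \leq f\ln f + 2(f|\ln M| + M)$ on the region where $f<1$; the $|v|^2$ and $|x|$ weights from $\ln M$ are then reabsorbed into $K^\varepsilon$ and the $|x|$-moment. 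The ion entropy is handled the same way with \eqref{E1bis}. It remains to bound $\iint|x|f^\varepsilon_\alpha$: plug \eqref{estimji}--\eqref{estimje} into the right-hand side of \eqref{Ex}, noting $|\tfrac{x}{|x|}\cdot j^\varepsilon_\alpha| \leq |j^\varepsilon_\alpha|$, so that
\[
 \iint |x| f^\varepsilon_\alpha \, dv\,dx \leq \int_0^t \bigl( C(T) + K^\varepsilon_i(s) + \tfrac12 D^\varepsilon(s) \bigr)\, ds + \iint |x| f^\text{in}_\alpha\, dv\,dx,
\]
and $\int_0^t K^\varepsilon_i \leq T \sup_s K^\varepsilon(s) \leq T U^\text{in}$ while $\int_0^t D^\varepsilon \leq U^\text{in}$ by \eqref{E1}; the initial moment is finite by \eqref{H1}. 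Summing over $\alpha$ and collecting all pieces yields \eqref{E4}. The one subtlety worth flagging — the main obstacle, such as it is — is the mild circularity in the entropy-vs-Maxwellian estimate: the lower bound on entropy itself involves $\iint|x|f^\varepsilon_\alpha$, so one must argue with a Grönwall-type or bootstrap argument (or, cleanly, first bound $\iint|x|f^\varepsilon_\alpha$ using only $K^\varepsilon$ and $D^\varepsilon$ as above, independently of entropy, and only then close the entropy bound), exactly as in the classical Vlasov--Poisson a priori estimates. Finally (c) is immediate: integrate \eqref{estimji}--\eqref{estimje} in time over $[0,T)$ and use $\int_0^T K^\varepsilon_i \leq T U^\text{in}$ and $\int_0^T D^\varepsilon \leq U^\text{in}$ for the current terms; the $\nabla_v\sqrt{f^\varepsilon_e}$ bound is $\tfrac{\varepsilon^2}{4}\int_0^T D^\varepsilon \leq \tfrac{\varepsilon^2}{4}U^\text{in} \leq C(T)$ (indeed $o(1)$), rewriting $D^\varepsilon$ via its second expression; and for the magnetic term one writes $\tfrac1\varepsilon(v\wedge B)\cdot\nabla_v f^\varepsilon_e = \tfrac1\varepsilon(v\wedge B)\cdot\bigl(vf^\varepsilon_e+\nabla_v f^\varepsilon_e\bigr)$ using the orthogonality $(v\wedge B)\cdot v = 0$, then $|(v\wedge B)\cdot(vf^\varepsilon_e+\nabla_v f^\varepsilon_e)| \leq \|B\|_\infty |v| \sqrt{f^\varepsilon_e}\cdot \tfrac{1}{\sqrt{f^\varepsilon_e}}|vf^\varepsilon_e+\nabla_v f^\varepsilon_e|$ and Cauchy--Schwarz in $(x,v)$ gives $\leq \|B\|_\infty \bigl(\iint|v|^2 f^\varepsilon_e\bigr)^{1/2}\bigl(\varepsilon^2 D^\varepsilon\bigr)^{1/2}/\varepsilon = \|B\|_\infty (2K^\varepsilon_e)^{1/2}(D^\varepsilon)^{1/2}$, whose time integral is bounded by $\|B\|_\infty \bigl(\int_0^T 2K^\varepsilon_e\bigr)^{1/2}\bigl(\int_0^T D^\varepsilon\bigr)^{1/2} \leq C(T)$. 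That orthogonality cancellation is the same "special cancellation" advertised in the introduction and is the one genuinely model-specific ingredient here.
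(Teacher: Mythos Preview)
Your overall architecture matches the paper's exactly, and parts (a) and the magnetic-term estimate in (c) are correct and essentially identical to the original. There are, however, two genuine errors.

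\medskip
\textbf{The circularity in (b) is not actually broken.} You correctly flag that the lower bound on $S_{e,-}^\varepsilon$ involves the $|x|$-moment, and then propose as the ``clean'' route to bound $\iint|x|f^\varepsilon_\alpha$ first via \eqref{Ex} and the current estimates, obtaining a right-hand side in terms of $\int_0^t K_i^\varepsilon$ and $\int_0^t D^\varepsilon$. But you then assert $\sup_s K^\varepsilon(s)\leq U^{\mathrm{in}}$ and $\int_0^t D^\varepsilon\leq U^{\mathrm{in}}$ directly from \eqref{E1}. Neither holds: \eqref{E1} gives $K^\varepsilon+E^\varepsilon+S_e^\varepsilon+\int_0^t D^\varepsilon\leq U^{\mathrm{in}}$, and since $S_e^\varepsilon$ may be arbitrarily negative, nothing prevents $K^\varepsilon$ or $\int_0^t D^\varepsilon$ from being large. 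So the ``clean'' route is still circular. The paper's resolution is not Gr\"onwall either: one writes $S_{e,-}^\varepsilon\leq \tfrac12 K_e^\varepsilon+\tfrac12\int_0^t\|j_e^\varepsilon\|_{L^1_x}+C$ (the $|x|$-moment having been replaced via \eqref{Ex} by the time-integrated current, which by \eqref{estimje} is controlled by $\int_0^t D^\varepsilon$), plugs this into the decomposition $K^\varepsilon+E^\varepsilon+S_{e,+}^\varepsilon+\int_0^t D^\varepsilon\leq U^{\mathrm{in}}+S_{e,-}^\varepsilon$, and \emph{absorbs} the resulting $\tfrac12 K_e^\varepsilon$ and $\tfrac14\int_0^t D^\varepsilon$ into the left-hand side. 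The point is that the constants are strictly less than $1$, so the inequality closes in one step.

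\medskip
\textbf{The $\nabla_v\sqrt{f_e^\varepsilon}$ bound in (c) is misstated.} The second expression for $D^\varepsilon$ involves $\nabla_v\sqrt{f_e^\varepsilon/M}$, not $\nabla_v\sqrt{f_e^\varepsilon}$; these differ by $\tfrac12 v\sqrt{f_e^\varepsilon/M}$. Expanding the square, the paper finds
\[
\|\nabla_v\sqrt{f_e^\varepsilon}\|_{L^2(0,T;L^2_{x,v})}^2\leq \frac{\varepsilon^2}{4}\int_0^T D^\varepsilon\,dt+\frac{3T}{2}\|f_e^{\mathrm{in}}\|_{L^1_{x,v}},
\]
the second term arising from integrating $-v\sqrt{f_e^\varepsilon}\cdot\nabla_v\sqrt{f_e^\varepsilon}$ by parts. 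In particular the quantity is $O(1)$, not $o(1)$ as you claim. The conclusion $\leq C(T)$ is of course unaffected.
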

\begin{proof}
  \textbf{(a)} First we can control the ion current density by decomposing the velocity space in the following way,
  \[
  \begin{aligned}
   &\|j_i^\varepsilon\|_{L^1_{x}}&\leq&&& \iint_{|v|<2}|v|f_i^\varepsilon dv dx + \iint_{|v|\geq 2}|v|f_i^\varepsilon dv dx\\
   &&\leq&&& 2\|f^\text{in}_i\|_{L^\infty(0,T;L^1_{x,v})} + \frac{1}{2}\iint_{\mathbb{R}^6}|v|^2f_i^\varepsilon dv dx.
  \end{aligned}
  \]
   We can conclude with \eqref{H1}. 
  
  The electron current density can be controlled by the free energy dissipation. Indeed, following the idea in \cite[Equation 2.21]{poupaud_2000_parabolic}, it writes 
  \[
  j_e = \frac{1}{\varepsilon}\int (v\sqrt{f^\varepsilon_e} + 2 \nabla_v\sqrt{f^\varepsilon_e})\sqrt{f^\varepsilon_e} dv
  \] 
  and thus
   \[
   \|j_e^\varepsilon\|_{L^1_{x}}\leq \frac{1}{2}\|f^\varepsilon_e\|_{L^1_{x,v}} + \frac{1}{2\varepsilon^2}\iint \frac{1}{f^\varepsilon_e}|vf^\varepsilon_e+\nabla_vf^\varepsilon_e|^2 dv dx
  \]
  and the desired result follows using \eqref{H1}.
  
  \medskip\noindent\textbf{(b)} The key arguments here are the entropy estimates \eqref{E1} and \eqref{E1bis}. 
  Since distribution functions are non-negative, we decompose the free energy in the following form
  \begin{equation}
  U^\varepsilon = K^\varepsilon+ E^\varepsilon + S_{e,+}^\varepsilon - S_{e,-}^\varepsilon,
  \label{decomp}
  \end{equation}
  where we define, for $\alpha\in\{i,e\}$,
  \[
   S_{\alpha,+}^\varepsilon  = \iint f_\alpha^\varepsilon \ln^+f_\alpha^\varepsilon dv dx,
  \]
  \[
   S_{\alpha,-}^\varepsilon  = \iint f_\alpha^\varepsilon \ln^-f_\alpha^\varepsilon dv dx,
  \]
  with $\ln^+(s) = \max\{\ln(s), 0\}$ and $\ln^-(s) = \max\{-\ln(s), 0\}$ for $s>0$. 
  By applying the same arguments as in \cite[Lemma 2.3]{poupaud_2000_parabolic} and estimate \eqref{Ex}, one can get the following bound on the negative part of the entropy
    \begin{equation}
    S_{e,-}^\varepsilon\leq C + \frac{1}{2}\left(K_e^\varepsilon + \|j_e^\varepsilon\|_{L^1(0,t,L^1_x)}\right) + \frac{1}{2}\iint|x|f^\text{in}_e dv dx,
    \label{lowerentropy}
  \end{equation}
  for some positive constant $C$.
  By inequality \eqref{estimje} on the electron current density and \eqref{lowerentropy}
  \[
   S_{e,-}^\varepsilon\leq \frac{1}{2}\left(K_e^\varepsilon + \int_0^tD^\varepsilon(s)ds\right)+C(T).
  \]
  Now with estimate \eqref{E1} and the decomposition of the free energy \eqref{decomp}, one can conclude that $K^\varepsilon$, $E^\varepsilon$, $\int_0^tD^\varepsilon(s)ds$, $S_{e,-}^\varepsilon$ and $S_{e,+}^\varepsilon$ are uniformly bounded in $\varepsilon$ and $t$. 
  Replacing the index $e$ by $i$, inequality \eqref{lowerentropy} holds true for the ion related quantities and the bound \eqref{estimji} on the ion current density and estimate \eqref{E1bis} on the ions entropy give the boundedness of $S_{i,-}^\varepsilon$ and $S_{i,+}^\varepsilon$.

  \medskip\noindent\textbf{(c)} The estimates on current densities follow using (a) and (b). 
  Now,  as in \cite[Corollary 5.3]{el_2010_diffusion}, the following computation
  \[
  \begin{aligned}
   &\|\nabla_v\sqrt{f^\varepsilon_e} \|_{L^2(0,T;L^2_{x,v})}^2 &=&&& \int_0^T\iint\Big{(}\frac{1}{4}|v\sqrt{f^\varepsilon_e} + 2\nabla_v\sqrt{f_e^\varepsilon}|^2 
   - \frac{1}{4}|v|^2f_e^\varepsilon - v\sqrt{f^\varepsilon_e}\cdot\nabla_v\sqrt{f_e^\varepsilon}\Big{)} dvdxdt 
   \\&&\leq&&&\frac{\varepsilon^2}{4}D^\varepsilon + \frac 12\int_0^T\iint (\nabla_v\cdot v) f_e^\varepsilon dv dx dt\\
   &&\leq&&& \frac{\varepsilon^2}{4}D^\varepsilon + \frac{3T}{2}\iint f^\text{in}_e dv dx,
  \end{aligned}
  \]
  provides the third bound.
  Finally we can control the term related to the magnetic field by noticing that, since $(v\wedge B)\cdot v = 0$,
  \[
   \begin{aligned}
   &\frac{1}{\varepsilon}\|(v\wedge B)\cdot\nabla_vf^\varepsilon_e \|_{L^1(0,T;L^1_{x,v})}&=&&&\frac{1}{\varepsilon}\int_0^T\iint \left|(v\wedge B)\cdot(2\nabla_v\sqrt{f^\varepsilon_e} + v \sqrt{f^\varepsilon_e})\sqrt{f^\varepsilon_e}\right| dx dv dt\\
   &&\leq&&&\|B\|_{L^\infty_{t,x}}\left(\frac{1}{\varepsilon^2}\int_0^T D^\varepsilon dt + T \sup_{[0,T)}K^\varepsilon_e\right).
   \end{aligned}
  \]
  We conclude with the estimates in (b).
\end{proof}
\begin{rema}
  Actually, the estimate on the ion current density can be largely improved using a classical moment lemma \cite[Lemma 3.1]{golse_1999_vlasov}. The latter gives that $(j_i^\varepsilon)_\varepsilon$ is uniformly bounded in $L^\infty(0,T; L^{5/4}_x)$.
\end{rema}

\section{Compactness of the family of solutions}
\label{compactness}
From the estimates of the foregoing section, we can infer some compactness. When $\varepsilon$ tends to $0$, estimates \eqref{E3} and \eqref{E4} give, up to the extraction of a subsequence,
\begin{equation}
 f_i^\varepsilon\rightarrow f_i \text{ weakly-$\star$ in }L^\infty(0,T;L^p(\mathbb{R}^6))\text{ for }p\in(1,\infty],
 \label{limfi}
\end{equation}
and
\begin{equation}
 \nabla_x\phi^\varepsilon\rightarrow \nabla_x\phi \text{ weakly-$\star$ in }L^\infty(0,T;L^2(\mathbb{R}^3)),
 \label{cvphi}
\end{equation}
where the limit is a gradient because it is irrotational in the sense of distributions.
\begin{lem}
 Families of physical solutions of \eqref{Vi-VFPe-P} satisfy the following properties
 \begin{enumerate}
  \item[\textbf{(a)}] $\left(f^\varepsilon_i\right)_\varepsilon$ and $\left(f^\varepsilon_e\right)_\varepsilon$ are weakly relatively compact in $L^1([0,T)\times\mathbb{R}^6)$.
  \item[\textbf{(b)}] $\left( n^\varepsilon_i\right)_\varepsilon$ and $\left( n^\varepsilon_e\right)_\varepsilon$ are weakly relatively compact in $L^1([0,T)\times\mathbb{R}^3)$.
  \item[\textbf{(c)}] $f_e^\varepsilon -  n^\varepsilon_e M\rightarrow 0$ in $L^1([0,T)\times\mathbb{R}^6)$ when $\varepsilon\rightarrow 0$
 \end{enumerate}
 \label{lemma1}
\end{lem}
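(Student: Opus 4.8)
The plan is to obtain the weak $L^1$ compactness in (a) from the Dunford--Pettis theorem, for which I need to establish (i) boundedness in $L^1$, (ii) tightness at infinity in both $x$ and $v$, and (iii) uniform integrability (no concentration). Boundedness in $L^1([0,T)\times\mathbb{R}^6)$ is immediate from \eqref{E2} for $f_e^\varepsilon$ and from \eqref{E3} (or the conserved $L^1$ mass) for $f_i^\varepsilon$, after integrating in $t$ over the finite interval $[0,T)$. Tightness in $x$ follows from the uniform bound on the space moments $\iint |x| f_\alpha^\varepsilon\, dv\, dx$ contained in \eqref{E4}, while tightness in $v$ follows from the uniform bound on the kinetic energies $K_\alpha^\varepsilon = \frac12\iint |v|^2 f_\alpha^\varepsilon\, dv\, dx$, again part of \eqref{E4}. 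For uniform integrability I use the uniform bound on the entropies $\iint f_\alpha^\varepsilon |\ln f_\alpha^\varepsilon|\, dv\, dx$ from \eqref{E4}: the classical de la Vallée-Poussin / superlinearity argument (using that $s\mapsto s\ln s$ is superlinear, together with the moment bounds to control the region where $f_\alpha^\varepsilon<1$) shows that on any set of small measure the integral of $f_\alpha^\varepsilon$ is small, uniformly in $\varepsilon$. This gives (a), and (b) then follows by integrating in $v$: the densities $n_\alpha^\varepsilon = \int f_\alpha^\varepsilon\, dv$ inherit $L^1$ boundedness and $x$-tightness trivially, and uniform integrability of $n_\alpha^\varepsilon$ in $(t,x)$ follows from that of $f_\alpha^\varepsilon$ together with the velocity-moment bound (the standard interpolation: $\int_{A} n_e^\varepsilon\, dx \le \int_A\int_{|v|\le R} f_e^\varepsilon + R^{-2}\int\int |v|^2 f_e^\varepsilon$, then optimize in $R$ after using uniform integrability of $f_e^\varepsilon$ on the bounded-velocity slab).

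For (c), the strategy is to exploit the free energy dissipation $D^\varepsilon$. Writing $g_e^\varepsilon = f_e^\varepsilon/M$, the identity already recorded in the excerpt gives
\[
\int_0^T D^\varepsilon(s)\, ds = \frac{4}{\varepsilon^2}\int_0^T\iint \left|\nabla_v\sqrt{g_e^\varepsilon}\right|^2 M\, dv\, dx\, dt \le C(T),
\]
so in particular $\nabla_v\sqrt{g_e^\varepsilon}\to 0$ strongly in $L^2((0,T)\times\mathbb{R}^6; M\,dv\,dx)$ at rate $\varepsilon$. By a weighted Poincaré inequality for the Gaussian measure $M\,dv$ (applied for a.e. $(t,x)$), $\sqrt{g_e^\varepsilon} - \langle \sqrt{g_e^\varepsilon}\rangle_M$ is controlled in $L^2(M\,dv)$ by $\|\nabla_v\sqrt{g_e^\varepsilon}\|_{L^2(M\,dv)}$, where $\langle\cdot\rangle_M$ denotes the $v$-average against $M$; integrating in $(t,x)$ shows $\sqrt{g_e^\varepsilon}$ becomes, as $\varepsilon\to 0$, a function of $(t,x)$ alone, i.e. $f_e^\varepsilon/M$ concentrates on functions independent of $v$. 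To identify the profile with the density, I square back: from $\sqrt{g_e^\varepsilon}\approx \rho^\varepsilon(t,x)$ in $L^2(M\,dv\,dx\,dt)$ one gets $g_e^\varepsilon\approx (\rho^\varepsilon)^2$, hence $f_e^\varepsilon - \big(\int f_e^\varepsilon\, dv\big) M \to 0$; the matching of the $v$-average with $n_e^\varepsilon$ uses $\int M\, dv = 1$. Making this rigorous in $L^1$ rather than $L^2$ (since we only have $f_e^\varepsilon\in L^1$, not $L^2$) requires a truncation: split $g_e^\varepsilon = g_e^\varepsilon\mathds{1}_{g_e^\varepsilon\le K} + g_e^\varepsilon\mathds{1}_{g_e^\varepsilon> K}$, handle the bounded part by the $L^2$ argument above and the tail by the uniform integrability from (a)/(b) (the entropy bound), then let $K\to\infty$ after $\varepsilon\to 0$.

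The main obstacle I anticipate is step (c), specifically passing from the $L^2(M)$-type control that the dissipation naturally provides to a genuine $L^1_{x,v}$ statement without any uniform $L^p$ bound for $p>1$ on $f_e^\varepsilon$ — the friction term in the Fokker-Planck operator destroys such bounds in the $\varepsilon\to 0$ limit (this is exactly the point flagged in the introduction and the reason renormalized solutions are used). The truncation argument sketched above is the standard remedy, but the bookkeeping — ensuring the tail terms are small uniformly in $\varepsilon$ via the entropy and moment bounds, and that the Gaussian Poincaré inequality is applied legitimately to the truncated quantity — is where the care is needed. A secondary technical point is that the Poincaré/spectral-gap argument controls $\sqrt{f_e^\varepsilon/M}$ rather than $f_e^\varepsilon/M$ directly, so recovering convergence of $f_e^\varepsilon$ itself requires the elementary inequality $|a^2-b^2|\le |a-b|(|a-b|+2|b|)$ together with an $L^2(M)$ bound on $\sqrt{g_e^\varepsilon}$, which is just the $L^1$ mass bound \eqref{E2}.
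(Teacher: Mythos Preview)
Your treatment of (a) and (b) via Dunford--Pettis, using the entropy bound for uniform integrability and the moment bounds for tightness, is essentially the paper's argument; the paper packages the same ingredients slightly differently by working with the weighted measure $\mathcal{M}(x,v)=W(x)M(v)$ and applying Jensen's inequality to pass the de~la~Vall\'ee~Poussin control from $f_\alpha^\varepsilon$ to $n_\alpha^\varepsilon$ in one stroke, but the content is the same.

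For (c) your route is genuinely different. The paper does not use the Gaussian Poincar\'e inequality at all: instead it invokes the Gaussian \emph{log-Sobolev} inequality to bound the relative entropy $\iint f_e^\varepsilon\ln\bigl(f_e^\varepsilon/(n_e^\varepsilon M)\bigr)\,dv\,dx$ by the Fisher-information-type dissipation, which is $O(\varepsilon^2)$ by \eqref{E4}, and then applies the Csisz\'ar--Kullback--Pinsker inequality to convert the relative entropy into an $L^1$ distance. This two-line argument gives the $L^1$ convergence directly and identifies the profile $n_e^\varepsilon M$ automatically, with no squaring, no truncation, and no separate identification of the $v$-average. Your Poincar\'e-and-square approach also works, but your worry about needing a truncation is misplaced: the quantity $\sqrt{g_e^\varepsilon}$ already lies in $L^2(M\,dv\,dx)$ precisely because $f_e^\varepsilon\in L^1$, so the squaring step $|a^2-b^2|\le|a-b|(|a|+|b|)$ combined with Cauchy--Schwarz closes without any cutoff. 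The only extra bookkeeping in your route is that $\langle\sqrt{g_e^\varepsilon}\rangle_M^2$ is not literally $n_e^\varepsilon$ but differs from it by $\int(h^\varepsilon)^2M\,dv=O(\varepsilon^2)$, which you would need to make explicit. The paper's log-Sobolev/CKP path sidesteps both of these nuisances.
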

\begin{proof}
The first two assertions follow from the Dunford-Pettis theorem.

\medskip\noindent\textbf{(a, b)} Let us define $W: x \mapsto e^{-|x|}/(8\pi)$, $\mathcal{M}: (x,v)\mapsto W(x)M(v)$ and $\Psi: u\geq0\mapsto u(\ln^+ u +1)$. 
 Note that $W$ and $\mathcal{M}$ are of integral $1$ and $\Psi$ is a convex, non-negative, increasing function satisfying
 \[
  \lim_{u\rightarrow \infty}\frac{\Psi(u)}{u} = +\infty.
 \]
 Let $\alpha\in\{i,e\}$. One sees that by the Jensen inequality
 \[
  \begin{aligned}
  &\int\Psi(\frac{n_{\alpha}^\varepsilon}{W})Wdx
  &=&&& \int\Psi\left(\int\frac{f_{\alpha}^\varepsilon}{\mathcal{M}}M dv\right)W dx\\
  &&\leq&&&\iint\Psi\left(\frac{f_{\alpha}^\varepsilon}{\mathcal{M}}\right)\mathcal{M} dv dx\\
  &&=&&&\iint f_{\alpha}^\varepsilon\ln\left(\frac{f_{\alpha}^\varepsilon}{\mathcal{M}}\right) dv dx\\
  &&\leq&&& S_{\alpha,+}^\varepsilon + S_{\alpha,-}^\varepsilon + K_{\alpha}^\varepsilon + \iint |x|f_{\alpha} dv dx + C\iint f_{\alpha}^\varepsilon dv dx,
  \end{aligned}
 \]
 for some constant $C>0$. Because of the uniform estimates \eqref{E3} and \eqref{E4}, we hence get by the de la Vallée Poussin lemma the equi-integrability of the bounded families 
 \[\{f_\alpha^\varepsilon/\mathcal{M}\}_\varepsilon\subset L^1(0,T;L^1(\mathcal{M} dv dx))\] and \[\{n_\alpha^\varepsilon/W\}_\varepsilon\subset L^1(0,T;L^1(W dx)).\] 
 This yields the announced weak compactness by the Dunford-Pettis theorem.

 \medskip\noindent\textbf{(c)} Using the log-Sobolev inequality (see \cite{gross_1975_logarithmic}), we get an upper bound for the relative entropy of the electrons with respect to a local Maxwellian
 \[
  \begin{aligned}
  &\int_0^t\iint f_e^\varepsilon\ln \left(\frac{f_e^\varepsilon}{ n^\varepsilon M}\right) dv dx ds &\leq&&& 2\int_0^t\iint\left|\nabla_v\sqrt{\frac{f_e^\varepsilon}{M}}\right|^2 M dv dx ds \\
  &&\leq&&& C(T) \varepsilon^2,
  \end{aligned}
 \]
where the last inequality comes from estimate \eqref{E4}. By the Cziszar-Kullback-Pinsker inequality (see \cite[Theorem 3.1 and Section 4]{csiszar_1967_information}), we have
 \[
  \begin{aligned}
  &\int_0^t\left(\iint\left|f^\varepsilon_e -  n^\varepsilon_e M\right| dv dx\right)^2 ds &\leq&&& 2\sup_{[0,T)}\left(\int n^\varepsilon_e dx\right)\int_0^t\iint f_e^\varepsilon\ln \left(\frac{f_e^\varepsilon}{ n^\varepsilon_e M}\right) dv dx ds \\
  &&\leq&&&  C(T) \varepsilon^2,
  \end{aligned}
 \]
 which yields the $L^2(0,T;L^1_{x,v})$ convergence hence the expected result.
\end{proof}

The results of Lemma \ref{lemma1} are not sufficient to take limits in equations. We need to gain strong compactness to deal with non-linear terms. 
While there is no hope of doing so with the distribution functions, the particular averaging properties of Vlasov type equations allows us to get additional compactness results in space on the macroscopic densities. 
Indeed, by the dispersion property of the $v\cdot\nabla_x$ transport operator, one gains regularity on velocity averages of the distribution function.
Besides, time compactness stems from the continuity equations \eqref{CE} and the uniform bounds on current densities. We shall apply the following averaging lemma that is adapted from the famous DiPerna and Lions results in \cite{diperna_1991_lp}.

\begin{lem}
 Let $\left(h^\varepsilon\right)_\varepsilon$ be a bounded sequence in $L^2(0,T;L^2_{x}(L^2_{\text{loc},v}))$ satisfying in the sense of distributions
 \[
  \varepsilon\partial_th^\varepsilon + v\cdot \nabla_xh^\varepsilon = h^\varepsilon_0 + \nabla_v\cdot h^\varepsilon_1
 \]
 where $\left(h^\varepsilon_0\right)_\varepsilon$, $\left(h^\varepsilon_1\right)_\varepsilon$ are bounded sequences in $L^1(0,T;L^1_{x}(L^1_{\text{loc},v}))$. Then for all $\psi\in\mathcal{D}(\mathbb{R}^3)$,
 \[
 \left\|\int (\tau_yh^\varepsilon - h^\varepsilon)\psi dv\right\|_{L^1(0,T;L^1_x)}\rightarrow 0
 \]
 when $y\rightarrow 0$ uniformly in $\varepsilon$, where $\tau_y$ is the translation of vector $y$ in the $x$ variable.
 \label{avlem}
\end{lem}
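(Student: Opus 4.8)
The plan is to prove this averaging lemma by combining the classical velocity-averaging estimate of DiPerna--Lions with an equi-integrability and cutoff argument that handles the $L^1$ (rather than $L^2$) regularity of the source terms $h_0^\varepsilon$ and $h_1^\varepsilon$, together with the unbounded velocity domain. The key observation is that the $\varepsilon\partial_t$ prefactor is harmless: after a Fourier transform in $x$ and $t$, the symbol of the operator $\varepsilon\partial_t + v\cdot\nabla_x$ is $i(\varepsilon\tau + v\cdot\xi)$, and the gain of regularity in $x$ provided by velocity averaging depends only on the $\xi$-direction, uniformly in the extra parameter $\varepsilon\tau$. So the standard estimate yielding a fractional Sobolev bound $H^{s}_{t,x}$ on $\int h^\varepsilon \psi\,dv$ (for some $s>0$) from an $L^2$ bound on $h^\varepsilon$ and an $L^2$ bound on the right-hand side carries over verbatim with constants independent of $\varepsilon$; this in turn controls the $x$-translations $\|\int(\tau_y h^\varepsilon - h^\varepsilon)\psi\,dv\|_{L^2_{t,x}}\lesssim |y|^{s}$, hence the $L^1_{t,x}$ translations on a bounded region.

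The main obstacle is that $h_0^\varepsilon$ and $h_1^\varepsilon$ are only bounded in $L^1_{t,x}(L^1_{\mathrm{loc},v})$, not in $L^2$, so the clean Hilbertian argument does not apply directly. The standard remedy, which I would follow, is a truncation/splitting: for a threshold $R>0$ write $h_0^\varepsilon = h_0^\varepsilon \mathbf{1}_{|h_0^\varepsilon|\le R} + h_0^\varepsilon \mathbf{1}_{|h_0^\varepsilon|> R}$ and similarly for $h_1^\varepsilon$. The truncated parts are bounded in $L^2_{t,x}(L^2_{\mathrm{loc},v})$ (with a norm depending on $R$), so they fall under the Hilbertian estimate and contribute a term of size $C(R)|y|^s$ to the translation norm. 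For the tails, one does not try to average them but instead uses that a family bounded in $L^1$ is, by Dunford--Pettis / de la Vallée Poussin, tight in the sense that $\|h_j^\varepsilon \mathbf{1}_{|h_j^\varepsilon|>R}\|_{L^1}\to 0$ as $R\to\infty$ uniformly in $\varepsilon$ — this requires the slightly stronger hypothesis that the $L^1$ bounds actually come with equi-integrability, which in our application is guaranteed because $h_0^\varepsilon, h_1^\varepsilon$ are built from $f_e^\varepsilon$, $\nabla_v\sqrt{f_e^\varepsilon}$ and the free-energy-dissipation-controlled quantities (cf. Proposition~\ref{corestimates} and the de la Vallée Poussin argument in Lemma~\ref{lemma1}). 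To propagate the smallness of the tails through the transport equation, I would solve the transport equation with the tail source by Duhamel's formula along characteristics $X(s) = x - \frac{s}{\varepsilon} v$, observe that the characteristic flow of $\varepsilon\partial_t + v\cdot\nabla_x$ is measure-preserving in $(x,v)$, and conclude that the contribution of the tail source to $h^\varepsilon$ is bounded in $L^1_{t,x,v}$ by (a constant times) $\|h_0^\varepsilon\mathbf{1}_{|h_0^\varepsilon|>R}\|_{L^1} + \|h_1^\varepsilon\mathbf{1}_{|h_1^\varepsilon|>R}\|_{L^1}$ uniformly in $\varepsilon$ — here the $\nabla_v\cdot$ in front of $h_1^\varepsilon$ is moved onto the test function $\psi$ after multiplying by $\psi(v)$ and integrating in $v$, so no $v$-derivative of the source is actually needed. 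Hence the velocity average of this piece is small in $L^1_{t,x}$ uniformly in $\varepsilon$ and $y$.

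Assembling the pieces: fix $\delta>0$; choose $R$ large enough that the tail contributions are $\le \delta/3$ in $L^1(0,T;L^1_x)$ uniformly in $\varepsilon$ and in $y$; with $R$ now fixed, the truncated contributions give $\|\int(\tau_y h^\varepsilon_{\le R}\psi - h^\varepsilon_{\le R}\psi)dv\|_{L^1(0,T;L^1_x)}\le C(R)|y|^s \le \delta/3$ once $|y|$ is small enough, uniformly in $\varepsilon$ (the support of $\psi$ makes the $L^2_{\mathrm{loc},v}\to$ averaged transition legitimate, and the $L^1$-vs-$L^2$ conversion on the bounded $x$-support of the compactly-supported-in-$v$ data is controlled using that $n_\alpha^\varepsilon$-type quantities are equi-integrable in $x$, or simply by restricting to the fixed ball where $\psi$ lives in $v$ and using the tightness in $x$ coming from the first-moment bound). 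Finally the cross terms between translated and untranslated pieces are handled by the triangle inequality. Therefore $\limsup_{|y|\to 0}\sup_\varepsilon \|\int(\tau_y h^\varepsilon - h^\varepsilon)\psi\,dv\|_{L^1(0,T;L^1_x)}\le \delta$ for every $\delta>0$, which is the claim. The one subtlety to flag is the passage from the $H^s$-type velocity-averaging bound, which is naturally $L^2$-based, to an honest uniform-in-$\varepsilon$ statement over the unbounded domain; this is where the compact support of $\psi$ in $v$ and the uniform moment/entropy bounds (which give equi-integrability and tightness, not mere $L^1$ boundedness) of the sources are essential, and why the lemma is stated for sequences arising from our physical solutions rather than for arbitrary $L^1$-bounded data.
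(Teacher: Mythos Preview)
The paper does not give a self-contained proof of this lemma; it simply refers to Appendix~2 of Masmoudi--Tayeb \cite{masmoudi_2007_diffusion}. Your sketch does identify the right overall idea---combine the Hilbertian averaging estimate with a truncation to cope with $L^1$ sources---but the Duhamel step you use for the tail contribution contains a genuine error.

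When you solve $\varepsilon\partial_t h + v\cdot\nabla_x h = g$ with zero initial data by characteristics, the formula is $h(t,x,v)=\varepsilon^{-1}\int_0^t g\bigl(s,\,x-(t-s)v/\varepsilon,\,v\bigr)\,ds$, and the measure-preserving change of variables in $x$ gives
\[
\|h\|_{L^1(0,T;L^1_{x,v})}\ \le\ \frac{1}{\varepsilon}\int_0^T (T-s)\,\|g(s)\|_{L^1_{x,v}}\,ds\ \le\ \frac{T}{\varepsilon}\,\|g\|_{L^1(0,T;L^1_{x,v})}.
\]
The constant $T/\varepsilon$ is \emph{not} uniform in $\varepsilon$, and the same factor persists if you first integrate against $\psi(v)$. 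In other words, splitting the source and solving the two resulting transport problems separately destroys exactly the uniformity you need: the $\varepsilon$ in front of $\partial_t$ makes the effective time span in Duhamel of order $T/\varepsilon$. As a secondary point, your argument also imports equi-integrability of $h_0^\varepsilon,\,h_1^\varepsilon$, which is not among the hypotheses of the lemma as stated (even though it happens to hold in the paper's application).

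The argument of Masmoudi--Tayeb avoids splitting the solution altogether. One works directly on the velocity average via Fourier analysis in $(t,x)$ and decomposes the $v$-integral according to the size of the full symbol $\varepsilon\tau+v\cdot\xi$. On the region where $|\varepsilon\tau+v\cdot\xi|$ is small relative to $|\xi|$ one uses only the $L^2$ bound on $h^\varepsilon$ together with the smallness of the measure of that set in $v$; on the complement one divides the equation by $i(\varepsilon\tau+v\cdot\xi)$ and uses that the Fourier transforms of the $L^1$-bounded sources are uniformly in $L^\infty$. Because the $\varepsilon\partial_t$ term is absorbed into the symbol on the left-hand side, no $1/\varepsilon$ ever appears, and only the plain $L^1$ bounds on $h_0^\varepsilon,\,h_1^\varepsilon$ are used, in accordance with the lemma's hypotheses.
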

\begin{proof}
 We refer to \cite[Appendix 2]{masmoudi_2007_diffusion} for a proof.
\end{proof}

Let us highlight that the following result is crucial to the rest of the proof of Theorem \ref{main} and that in particular, point (b) in Proposition \ref{prop1} depends upon the control of the magnetic leading term of the equation by the entropy dissipation.
\begin{prop}
 Families of physical solutions of \eqref{Vi-VFPe-P} satisfy the following strong compactness properties
 \begin{enumerate}
  \item[\textbf{(a)}] $\left(\nabla_x\phi^\varepsilon\right)_\varepsilon$ is relatively compact in $L^2(0,T;L^p_{\text{loc},x})$ for $1\leq p<2$,
  \item[\textbf{(b)}] $\left( n_e^\varepsilon\right)_\varepsilon$ is relatively compact in $L^1([0,T)\times\mathbb{R}^3)$.
 \end{enumerate}
 \label{prop1}
\end{prop}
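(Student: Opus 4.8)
The plan is to derive, for each species, equicontinuity of the macroscopic density under space translations from the velocity averaging Lemma~\ref{avlem}, to combine it with equicontinuity in time coming from the continuity equations~\eqref{CE}, and then to conclude strong $L^1$ compactness of $n_i^\varepsilon$ and $n_e^\varepsilon$ via the Riesz--Fréchet--Kolmogorov criterion, using that uniform integrability and tightness are already provided by Lemma~\ref{lemma1} and~\eqref{E4}. Point (b) is exactly the resulting statement for $n_e^\varepsilon$; point (a) will then follow, once both $n_i^\varepsilon$ and $n_e^\varepsilon$ are known to be relatively compact in $L^1$, by writing $\nabla_x\phi^\varepsilon=\nabla_x\Phi\ast_x(n_i^\varepsilon-n_e^\varepsilon)$ and interpolating against the energy bound~\eqref{E4}.

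The first step is to cast the kinetic equations in the form required by Lemma~\ref{avlem}. Since $\nabla_v\cdot(v\wedge B)=0$, the ion equation reads $\partial_t f_i^\varepsilon+v\cdot\nabla_x f_i^\varepsilon=\nabla_v\cdot\big[(\nabla_x\phi^\varepsilon-v\wedge B)f_i^\varepsilon\big]$; here $f_i^\varepsilon$ is bounded in $L^2(0,T;L^2_x(L^2_{\mathrm{loc},v}))$ by the uniform bound $f_i^\varepsilon\in L^\infty(0,T;L^1_{x,v}\cap L^\infty_{x,v})$, and the right-hand side is the $v$-divergence of a field bounded in $L^1(0,T;L^1_x(L^1_{\mathrm{loc},v}))$ thanks to $\|B\|_{L^\infty_{t,x}}$, the $L^1_{x,v}$ and $L^\infty_{x,v}$ bounds on $f_i^\varepsilon$, and a Cauchy--Schwarz estimate of $\nabla_x\phi^\varepsilon f_i^\varepsilon$ using $\|\nabla_x\phi^\varepsilon\|_{L^2_x}\le C$ from~\eqref{E4}; thus Lemma~\ref{avlem} applies to the ion family (the case of a transport operator with unit time-coefficient). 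For the electrons, $f_e^\varepsilon$ is merely $L^1$, so I first truncate: choosing $\beta_k\in\mathcal{C}^2(\mathbb{R}_+)$ with $0\le\beta_k'\le1$, $\beta_k(u)=u$ on $[0,k/2]$, $\beta_k\le\min(\,\cdot\,,k)$ and $\|u\beta_k''(u)\|_\infty\le C$, the renormalized equation~\eqref{ebeta} with $\beta=\beta_k$ becomes $\varepsilon\partial_t\beta_k(f_e^\varepsilon)+v\cdot\nabla_x\beta_k(f_e^\varepsilon)=h_{0,k}^\varepsilon+\nabla_v\cdot h_{1,k}^\varepsilon$, where
\[
h_{0,k}^\varepsilon=\frac1\varepsilon\beta_k'(f_e^\varepsilon)\,(v\wedge B)\cdot\nabla_v f_e^\varepsilon-\frac1\varepsilon\beta_k''(f_e^\varepsilon)\,\nabla_v f_e^\varepsilon\cdot(vf_e^\varepsilon+\nabla_v f_e^\varepsilon),
\]
\[
h_{1,k}^\varepsilon=-\nabla_x\phi^\varepsilon\,\beta_k(f_e^\varepsilon)+\frac1\varepsilon\beta_k'(f_e^\varepsilon)\,(vf_e^\varepsilon+\nabla_v f_e^\varepsilon).
\]
Then $\beta_k(f_e^\varepsilon)$ is bounded in $L^2(0,T;L^2_x(L^2_{\mathrm{loc},v}))$ since $\beta_k(f_e^\varepsilon)^2\le k\,f_e^\varepsilon$, and $h_{0,k}^\varepsilon,h_{1,k}^\varepsilon$ are bounded in $L^1(0,T;L^1_x(L^1_{\mathrm{loc},v}))$ uniformly in $\varepsilon$ for each fixed $k$: the magnetic term uses $|\beta_k'|\le1$ together with the decisive estimate $\varepsilon^{-1}\|(v\wedge B)\cdot\nabla_v f_e^\varepsilon\|_{L^1(0,T;L^1_{x,v})}\le C(T)$ of~\eqref{E5}; the two terms containing $vf_e^\varepsilon+\nabla_v f_e^\varepsilon$ are controlled by Cauchy--Schwarz against $\sqrt{f_e^\varepsilon}$ (using $\nabla_v f_e^\varepsilon=2\sqrt{f_e^\varepsilon}\,\nabla_v\sqrt{f_e^\varepsilon}$, $|u\beta_k''(u)|\le C$, $\int_0^T D^\varepsilon\le C$, $\|f_e^\varepsilon\|_{L^1_{x,v}}\le C$ and $\|\nabla_v\sqrt{f_e^\varepsilon}\|_{L^2(0,T;L^2_{x,v})}\le C$ from~\eqref{E4}--\eqref{E5}); and the field term uses $\|\nabla_x\phi^\varepsilon\|_{L^2_x}\le C$ together with $0\le\beta_k(f_e^\varepsilon)\le k$ on bounded $v$-sets. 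I expect this step to be the main obstacle: the $\varepsilon^{-1}$ Lorentz term \emph{must} be kept as a zeroth-order source rather than a $v$-divergence, its singular factor being absorbed precisely by the entropy-dissipation cancellation isolated in Proposition~\ref{corestimates}; this is where the special structure of the Fokker--Planck operator is exploited, and the argument would break for a generic linear collision operator.

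Granting this, Lemma~\ref{avlem} yields, for every $\psi\in\mathcal{D}(\mathbb{R}^3)$, that $\int(\tau_y f_i^\varepsilon-f_i^\varepsilon)\psi\,dv$ and $\int(\tau_y\beta_k(f_e^\varepsilon)-\beta_k(f_e^\varepsilon))\psi\,dv$ tend to $0$ in $L^1(0,T;L^1_x)$ as $y\to0$, uniformly in $\varepsilon$. To upgrade this to equicontinuity of $n_i^\varepsilon$ and $n_e^\varepsilon$ themselves in $L^1([0,T)\times\mathbb{R}^3)$ I would replace $\psi$ by a cutoff $\psi_R\in\mathcal{D}(\mathbb{R}^3)$ with $\mathbf{1}_{B_R}\le\psi_R\le\mathbf{1}_{B_{2R}}$, the error being bounded by $\iint_{|v|>R}f_\alpha^\varepsilon\le R^{-2}\iint|v|^2 f_\alpha^\varepsilon\le C(T)/R^2$ uniformly by~\eqref{E4}; and, for the electrons, also use $\int(f_e^\varepsilon-\beta_k(f_e^\varepsilon))\,dv\le\int_{\{f_e^\varepsilon>k/2\}}f_e^\varepsilon\,dv$, whose space--time integral tends to $0$ as $k\to\infty$ uniformly in $\varepsilon$ by the uniform integrability of $(f_e^\varepsilon)$ from Lemma~\ref{lemma1}(a). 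Equicontinuity in time comes from~\eqref{CE}, $\partial_t n_\alpha^\varepsilon=-\nabla_x\cdot j_\alpha^\varepsilon$ with $\|j_\alpha^\varepsilon\|_{L^1(0,T;L^1_x)}\le C(T)$ by~\eqref{E5}, via the classical argument (mollify in $x$, then invoke the space equicontinuity). Combining with the uniform integrability and tightness of $(n_i^\varepsilon)$, $(n_e^\varepsilon)$ from Lemma~\ref{lemma1}(b) and~\eqref{E4}, the Riesz--Fréchet--Kolmogorov criterion shows both families are relatively compact in $L^1([0,T)\times\mathbb{R}^3)$; for $n_e^\varepsilon$ this is precisely~(b).

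For (a), extract a subsequence along which $n^\varepsilon:=n_i^\varepsilon-n_e^\varepsilon\to n$ strongly in $L^1([0,T)\times\mathbb{R}^3)$. Splitting the Newtonian kernel $\nabla_x\Phi=\nabla_x\Phi\,\mathbf{1}_{|x|<1}+\nabla_x\Phi\,\mathbf{1}_{|x|\ge1}$, with $\nabla_x\Phi\,\mathbf{1}_{|x|<1}\in L^q(\mathbb{R}^3)$ for all $q<3/2$ and $\nabla_x\Phi\,\mathbf{1}_{|x|\ge1}$ bounded with quadratic decay, Young's inequality shows that $u\mapsto\nabla_x\Phi\ast_x u$ is bounded from $L^1(\mathbb{R}^3)$ into $L^q_{\mathrm{loc}}(\mathbb{R}^3)$ for every $q<3/2$; hence $\nabla_x\phi^\varepsilon=\nabla_x\Phi\ast_x n^\varepsilon$ is Cauchy in $L^1(0,T;L^q_{\mathrm{loc},x})$ for $q<3/2$. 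Interpolating these bounds on each ball $B$ with the uniform bound $\nabla_x\phi^\varepsilon\in L^\infty(0,T;L^2_x)$ of~\eqref{E4}, and treating the time integrability by a Jensen/boundedness argument (the $L^q_x(B)$-norms being controlled by the $L^2_x$-norm on $B$), upgrades the convergence to Cauchy in $L^2(0,T;L^p_{x}(B))$ for every ball $B$ and every $p<2$, which is statement~(a); the limit coincides with $\nabla_x\phi$ from~\eqref{cvphi} since distributional and weak-$\star$ limits agree.
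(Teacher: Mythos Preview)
Your proof of (b) follows essentially the same route as the paper: apply the averaging lemma to the renormalized electron equation, use the crucial entropy-dissipation bound~\eqref{E5} to control the singular magnetic term kept as a zeroth-order source, remove the renormalization via the uniform integrability of $(f_e^\varepsilon)_\varepsilon$, extend to $\psi\equiv1$ via the uniform second velocity moment, and obtain time equicontinuity from the continuity equation. The only cosmetic difference is the choice of renormalization (your truncations $\beta_k$ versus the paper's $\beta_\delta(u)=u/(1+\delta u)$).

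For (a), however, your argument takes a genuinely different path. The paper proves (a) \emph{first} and independently of (b), by applying the Aubin--Lions--Simon lemma directly to the family $(\nabla_x\phi^\varepsilon)_\varepsilon$ (following \cite[Proposition~3.3~3)]{masmoudi_2007_diffusion}): space regularity is read off the Poisson equation, and the time-derivative bound comes from $\partial_t\nabla_x\phi^\varepsilon=-\nabla_x\Phi\ast_x\nabla_x\cdot j^\varepsilon$ with $j^\varepsilon\in L^1_{t,x}$. You instead reverse the order: you first establish strong $L^1$ compactness not only of $(n_e^\varepsilon)_\varepsilon$ but also of $(n_i^\varepsilon)_\varepsilon$ (an extra step, applying Lemma~\ref{avlem} to the ion Vlasov equation with unit time-coefficient), and then deduce compactness of $\nabla_x\phi^\varepsilon=\nabla_x\Phi\ast_x(n_i^\varepsilon-n_e^\varepsilon)$ by Young's inequality on the split kernel followed by interpolation with the uniform $L^\infty(0,T;L^2_x)$ bound. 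Your approach avoids the abstract Aubin--Lions machinery and is arguably more elementary, at the cost of the additional (but straightforward) averaging-lemma step for ions; the paper's approach is more direct for (a) but leans on a compactness black box. Both are correct; the interpolation you sketch (first upgrade $L^1_t$ to $L^2_t$ using the $L^\infty_t L^2_x$ bound on balls, then interpolate in $x$ between $L^q(B)$ and $L^2(B)$) does indeed cover every $p<2$.
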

 \begin{proof} 
 The result of (a) stems from applying the Aubin-Lions-Simon lemma \cite[Theorem 5, Corollary 4]{simon_1987_compact} to the family of electric fields and the proof can be readily adapted from \cite[Proposition 3.3 3)]{masmoudi_2007_diffusion}.

  \medskip\noindent\item\textbf{(b)} Let us define \[\beta_\delta:u\mapsto \frac{u}{1+\delta u}.\] 
  There exits $C_\delta>0$ such that, for all $u\geq0$
  \[
   \beta_\delta(u)\leq\min(\frac{1}{\delta}, u),\quad (1+\sqrt{u})|\beta_\delta'(u)|\leq 1+\frac{1}{\sqrt{\delta}},\quad |u\beta_\delta''(u)|\leq \frac 12. 
  \]

   In particular, one checks that $\beta_\delta$ satisfies the requirements of a renormalization function for the Vlasov-Fokker-Planck equation and hence \eqref{ebeta} holds. Now set 
  \[
   \left\{
    \begin{aligned}
     &h^\varepsilon = \beta_\delta(f_e^\varepsilon)\\
     &h^\varepsilon_0 = -\nabla_vf^\varepsilon_e\cdot\frac{vf^\varepsilon_e + \nabla_vf^\varepsilon_e}{\varepsilon}\beta_\delta''(f^\varepsilon_e) + \frac{1}{\varepsilon}(v\wedge B)\cdot\nabla_v(\beta_\delta(f^\varepsilon_e))\\
     &h^\varepsilon_1 = -\nabla_x\phi^\varepsilon\beta_\delta(f^\varepsilon_e) + \frac{vf^\varepsilon_e + \nabla_vf^\varepsilon_e}{\varepsilon}\beta_\delta'(f^\varepsilon_e)
    \end{aligned}
   \right.
  \]
   With the estimates of Proposition \ref{estimates}, one can show that the sequences $\left(h^\varepsilon\right)_\varepsilon$, $\left(h^\varepsilon_0\right)_\varepsilon$ and $\left(h^\varepsilon_1\right)_\varepsilon$ satisfy the hypotheses of Lemma \ref{avlem}.
   We refer to \cite[Proposition 6.1]{el_2010_diffusion} for the details. In our case, we additionally need to check that the magnetic field term satisfies the $L^1$ bound. 
   Indeed, using that $(v\wedge B)\cdot v = 0$, we have
 \[
 \begin{aligned}
  &&&\left\|\frac{1}{\varepsilon}(v\wedge B)\cdot\nabla_v(\beta_\delta(f^\varepsilon_e))\right\|_{L^1(0,T;L^1_{x,v})}\\
  &&=&\left\|(v\wedge B)\sqrt{f^\varepsilon_e}\cdot\frac{1}{\varepsilon}(2\nabla_v\sqrt{f^\varepsilon_e} + v\sqrt{f^\varepsilon_e}) \beta_\delta'(f^\varepsilon_e)\right\|_{L^1(0,T;L^1_{x,v})}\\
  &&\leq&\sqrt{T}\left\|B\right\|_{L^\infty(0,T;L^\infty_x)}\left\||v|^2f^\varepsilon_e\right\|_{L^\infty(0,T;L^1_{x,v})}\left\|\frac{vf^\varepsilon_e + \nabla_vf^\varepsilon_e}{\varepsilon\sqrt{f^\varepsilon_e}}\right\|_{L^2(0,T;L^2_{x,v})}\left\|\beta_\delta'(f^\varepsilon_e)\right\|_{L^\infty(0,T;L^\infty_{x,v})}.
 \end{aligned}
 \]
Then for any fixed $\delta$ and any $\psi\in \mathcal{D}(\mathbb{R}^3)$
 \begin{equation}
 \left\|\int (\tau_y\beta_\delta(f_e^\varepsilon) - \beta_\delta(f_e^\varepsilon))\psi dv\right\|_{L^1(0,T;L^1_x)}\rightarrow 0
 \label{translabeta}
 \end{equation}
 when $y\rightarrow 0$ uniformly in $\varepsilon$. The $L^\infty(0,T;L^1((1+|v|^2)dv dx)$ uniform bound on $f_e^\varepsilon$ from \eqref{E2} and \eqref{E4} allows us to extend this to $\psi(v)\equiv 1$.
 Now, we can also take the limit $\delta\rightarrow 0$ uniformly in $\varepsilon$, using the equi-integrability of the family $(f_e^\varepsilon)_\varepsilon$. Indeed, since, for any $u>0$
 \[
  0\leq\beta_\delta(u)\leq u\ \text{ and }\ |\beta_\delta(u)-u|\leq \delta u^2, 
 \]
 one has
 \begin{equation}
 \|f_e^\varepsilon - \beta_\delta(f_e^\varepsilon)\|_{L^1([0,T)\times\mathbb{R}^6}\leq 2\int_0^T\iint_{f_e^\varepsilon>M} f_e^\varepsilon dv dx dt 
 +\delta\int_0^T\iint_{\{f_e^\varepsilon<M\}} |f_e^\varepsilon|^2 dv dx dt
  \label{approxbeta}
 \end{equation}
 for an arbitrary $M>0$. The first term is $O(1/\ln(|M|))$ because of the uniform bound on the entropy $S_e^\varepsilon$. 
 Using the uniform bound on the mass, the second term is seen to be $O(\delta M)$. Take $M = \delta^{-1/2}$ to conclude. 
 Therefore, using \eqref{translabeta} and \eqref{approxbeta}, one has
 \[
 \left\|\tau_y n_e^\varepsilon -  n_e^\varepsilon\right\|_{L^1(0,T;L^1_x)}\rightarrow 0
 \]
  when $y\rightarrow 0$ uniformly in $\varepsilon$. 
  
 Finally, using the continuity equation \eqref{CE}, we get a $L^1(0,T;W^{-1,1}_x)$ bound on $\partial_t n_e^\varepsilon$. With the uniform $L^1((1+|x|)dx dt)$ estimate \eqref{Ex} on $ n_e^\varepsilon$, this gives the relative compactness of the sequence.
\end{proof}

Using the results of Lemma \ref{lemma1} and Proposition \ref{prop1}, we get the following strong convergence results concerning the macroscopic density of electrons.
\begin{lem}
 The families of physical solutions of \eqref{Vi-VFPe-P} satisfy the following properties, up to the extraction of a subsequence. There exists $ n_e\in L^1([0,T)\times\mathbb{R}^3)$ such that when $\varepsilon\rightarrow 0$
 \begin{enumerate}
  \item[\textbf{(a)}] $ n_e^\varepsilon\rightarrow n_e$ in $L^1(0,T;L^1_x)$,
  \item[\textbf{(b)}] $\sqrt{ n_e^\varepsilon}\rightarrow\sqrt{ n_e}$ in $L^2(0,T;L^2_x)$,
  \item[\textbf{(c)}] $f_e^\varepsilon\rightarrow n_eM$ in $L^1(0,T;L^1_{x,v})$,
  \item[\textbf{(d)}] $\theta_{\varepsilon,\lambda} = \sqrt{f_e^\varepsilon + \lambda M}\rightarrow\sqrt{ n_e + \lambda}\sqrt{M}$ in $L^2(0,T;L^2_{x,v})$, 
  \item[\textbf{(e)}] $\frac{M}{\theta_{\varepsilon,\lambda}}\rightarrow \frac{\sqrt{M}}{\sqrt{ n_e + \lambda}}$ in $L^2(0,T;L^2_{x,v})$,
  \item[\textbf{(f)}] $\frac{\sqrt{ n_e^\varepsilon}M}{\theta_{\varepsilon,\lambda}}\rightarrow \frac{\sqrt{ n_e M}}{\sqrt{ n_e + \lambda}}$ in $L^2(0,T;L^2_{x,v})$,
 \end{enumerate}
 for any $\lambda>0$.
 \label{cvrho}
\end{lem}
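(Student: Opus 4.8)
The plan is to extract everything from three facts already established: the weak $L^1$ compactness of $(n^\varepsilon_e)_\varepsilon$ and $(f^\varepsilon_e)_\varepsilon$ in Lemma \ref{lemma1}, the strong relative compactness of $(n^\varepsilon_e)_\varepsilon$ in $L^1([0,T)\times\mathbb{R}^3)$ from Proposition \ref{prop1}(b), and the relative entropy estimate $f^\varepsilon_e - n^\varepsilon_e M\to 0$ in $L^1$ from Lemma \ref{lemma1}(c). Putting the first two together, up to a subsequence there is a nonnegative $n_e\in L^1([0,T)\times\mathbb{R}^3)$ with $n^\varepsilon_e\to n_e$ strongly in $L^1(0,T;L^1_x)$, and, along a further subsequence, almost everywhere; this is (a). Then (b) follows from the elementary inequality $|\sqrt{a}-\sqrt{b}|^2\le|a-b|$, valid for $a,b\ge0$, which gives $\|\sqrt{n^\varepsilon_e}-\sqrt{n_e}\|_{L^2(0,T;L^2_x)}^2\le\|n^\varepsilon_e-n_e\|_{L^1(0,T;L^1_x)}\to0$. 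For (c) I would write $f^\varepsilon_e - n_e M = (f^\varepsilon_e - n^\varepsilon_e M) + (n^\varepsilon_e - n_e)M$, note that the first term tends to $0$ in $L^1(0,T;L^1_{x,v})$ by Lemma \ref{lemma1}(c) while $\|(n^\varepsilon_e-n_e)M\|_{L^1_{x,v}}=\|n^\varepsilon_e-n_e\|_{L^1_x}$ since $\int M\,dv=1$, so the second term tends to $0$ by (a); passing once more to a subsequence I may also assume $f^\varepsilon_e\to n_e M$ almost everywhere on $(0,T)\times\mathbb{R}^6$.

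Statement (d) is again the inequality $|\sqrt{a}-\sqrt{b}|^2\le|a-b|$, now with $a=f^\varepsilon_e+\lambda M$ and $b=(n_e+\lambda)M$, so that $\|\theta_{\varepsilon,\lambda}-\sqrt{n_e+\lambda}\sqrt{M}\|_{L^2(0,T;L^2_{x,v})}^2\le\|f^\varepsilon_e-n_e M\|_{L^1(0,T;L^1_{x,v})}\to0$ by (c). For (e) I would introduce $g^\varepsilon_e:=f^\varepsilon_e/M$, so that $M/\theta_{\varepsilon,\lambda}=\sqrt{M}/\sqrt{g^\varepsilon_e+\lambda}$; using $g^\varepsilon_e+\lambda\ge\lambda$ and $n_e+\lambda\ge\lambda$, then $|\sqrt{g^\varepsilon_e+\lambda}-\sqrt{n_e+\lambda}|^2\le|g^\varepsilon_e-n_e|$, and finally $M|g^\varepsilon_e-n_e|=|f^\varepsilon_e-n_e M|$, one obtains the pointwise bound $|M/\theta_{\varepsilon,\lambda}-\sqrt{M}/\sqrt{n_e+\lambda}|^2\le\lambda^{-2}|f^\varepsilon_e-n_e M|$, which integrates to the claim thanks to (c). Note that here neither $M/\theta_{\varepsilon,\lambda}$ nor its limit is globally square integrable, because of the flat $x$-direction, so the statement should be read as saying that the difference belongs to $L^2$ and vanishes; the same remark applies to (d).

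For (f) I would first use the pointwise convergences of $n^\varepsilon_e$ and $f^\varepsilon_e$ to get $\sqrt{n^\varepsilon_e}\,M/\theta_{\varepsilon,\lambda}\to\sqrt{n_e M}/\sqrt{n_e+\lambda}$ almost everywhere (here $M>0$ and $n_e+\lambda\ge\lambda>0$, so no denominator degenerates). Then, since $\big(\sqrt{n^\varepsilon_e}\,M/\theta_{\varepsilon,\lambda}\big)^2 = n^\varepsilon_e M^2/(f^\varepsilon_e+\lambda M)\le n^\varepsilon_e M/\lambda$ and $n^\varepsilon_e M/\lambda\to n_e M/\lambda$ in $L^1(0,T;L^1_{x,v})$ by (a), the generalized dominated convergence theorem (with an $L^1$-convergent dominating sequence) yields convergence of the squares in $L^1$, hence convergence of the $L^2$ norms; combined with the almost everywhere convergence and the uniform $L^2$ bound, this forces strong convergence in $L^2(0,T;L^2_{x,v})$, which is (f).

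The only genuinely delicate point is (f): because $n_e$ is known to lie only in $L^1$ and in no better space, one cannot simply multiply the strong limits of (b) and (e) to handle the product $\sqrt{n^\varepsilon_e}\cdot(M/\theta_{\varepsilon,\lambda})$; the way around is to argue with almost everywhere convergence plus a domination argument rather than with a product of $L^2$ functions. Everything else is bookkeeping around the single inequality $|\sqrt{a}-\sqrt{b}|^2\le|a-b|$ and the normalization $\int M\,dv=1$.
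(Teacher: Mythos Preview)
Your argument is correct and, for items (a)--(e), essentially matches the paper's (very terse) proof, which simply says these follow from Lemma~\ref{lemma1}(c) and Proposition~\ref{prop1}(b). Your observation that in (d) and (e) neither function is globally square integrable in $x$, so that the statement must be read as convergence of the \emph{difference} in $L^2$, is a valid point that the paper leaves implicit.

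For (f) your route differs from the paper's. The paper deduces (e) and (f) from (b), (d) and the $L^\infty$ bounds $\sqrt{M}/\theta_{\varepsilon,\lambda}\le\lambda^{-1/2}$ and $\sqrt{n_e}/\sqrt{n_e+\lambda}\le1$: writing
\[
\frac{\sqrt{n_e^\varepsilon}\,M}{\theta_{\varepsilon,\lambda}}-\frac{\sqrt{n_e}\,M}{\theta_\lambda}
=(\sqrt{n_e^\varepsilon}-\sqrt{n_e})\,\frac{M}{\theta_{\varepsilon,\lambda}}
+\frac{\sqrt{n_e}}{\sqrt{n_e+\lambda}}\,\frac{\sqrt{M}}{\theta_{\varepsilon,\lambda}}\,(\theta_\lambda-\theta_{\varepsilon,\lambda}),
\]
the first piece is controlled by (b) and the bound $M/\theta_{\varepsilon,\lambda}\le\sqrt{M/\lambda}$, and the second by (d) and the two $L^\infty$ bounds. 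You instead pass to a further subsequence for almost-everywhere convergence and invoke the generalized dominated convergence theorem with the $L^1$-convergent majorant $n_e^\varepsilon M/\lambda$. Both arguments are valid; the paper's avoids extracting an additional subsequence and is purely algebraic, while yours sidesteps the need to spot the right decomposition and makes transparent why one cannot simply multiply the limits in (b) and (e).
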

\begin{proof}
 Properties (a, b, c, d) are straightforward consequences of Lemma~\ref{lemma1}~(c) and Proposition \ref{prop1} (b). 
 The last two assertions follow from (b), (d) and the $L^\infty$ bounds on $\sqrt{M}/\theta_{\varepsilon, \lambda}$ and $\sqrt{n_e}/\sqrt{n_e+\lambda}$.
%
%
\end{proof}

\section{Taking limits in equation \eqref{etheta}}
\label{limit}

Using the previous compactness results, we can readily take limits in weak formulations of the Vlasov equation for ions in \eqref{Vi-VFPe-P}, 
the continuity equation for electrons \eqref{CE}, and the Poisson equation in \eqref{Vi-VFPe-P}.
\begin{lem}
 The limits $n_e\in L^1([0,T)\times\mathbb{R}^3)$, $f_i\in L^\infty(0,T;L^1_{x,v}\cap L^\infty_{x,v})$ and $\nabla_x\phi\in L^\infty(0,T;L^2_{x})$ defined respectively in Lemma \ref{cvrho}, 
 \eqref{limfi} and \eqref{cvphi} satisfy in the sense of distributions,
 \[
 \left\{
 \begin{aligned}
  &\partial_tf_i + v\cdot\nabla_xf_i - \nabla_x\phi\cdot\nabla_vf_i +(v\wedge B)\cdot\nabla_vf_i = 0,\\
  &\partial_t n_e + \nabla_x\cdot j_e = 0,\\
  &-\Delta_x\phi =  n_i -  n_e,\\
  &n_\alpha = \int f_\alpha dv,\ \forall\alpha\in\{i,e\}, 
 \end{aligned}
 \right.
\] 
for any accumulation point $j_e$ of the family $(j_e^\varepsilon)_{\varepsilon}$. 
\end{lem}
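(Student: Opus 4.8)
The plan is to pass to the limit $\varepsilon\to 0$ in the weak formulations of those equations of \eqref{Vi-VFPe-P} that carry no factor $1/\varepsilon$ --- namely the ion Vlasov equation, the electron continuity equation \eqref{CE} with $\alpha=e$, and the Poisson equation --- using only the compactness already collected in Lemma \ref{lemma1}, Proposition \ref{prop1} and Lemma \ref{cvrho}. Every term is affine in the unknowns except for the force term $\nabla_x\phi^\varepsilon\cdot\nabla_vf_i^\varepsilon$ of the ion equation, so the only genuine issue is the passage to the limit in this single nonlinear product; everything else is a routine matter of feeding an admissible test function into a weak-$\star$ convergence.

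\textbf{Ion Vlasov equation.} Fix $\varphi\in\mathcal{D}([0,T)\times\mathbb{R}^6)$. Since $\nabla_x\phi^\varepsilon$ and $B=B(t,x)$ do not depend on $v$ and $\nabla_v\cdot(v\wedge B)=0$, the force terms rewrite as $\nabla_v\cdot\big((-\nabla_x\phi^\varepsilon+v\wedge B)f_i^\varepsilon\big)$. In the weak formulation the transport and magnetic contributions are tested against $v\,\nabla_x\varphi$ and $(v\wedge B)\,\nabla_v\varphi$, which (since $B\in L^\infty_{t,x}$ and $\varphi$ has compact support) are bounded with compact support in $(x,v)$, hence lie in $L^1(0,T;L^1_{x,v})$; together with the time-derivative term and the initial datum they pass to the limit by the weak-$\star$ convergence \eqref{limfi}. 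There remains $\int_0^T\!\!\int_{\mathbb{R}^3}\nabla_x\phi^\varepsilon\cdot g^\varepsilon\,dx\,dt$ with $g^\varepsilon(t,x):=\int_{\mathbb{R}^3}f_i^\varepsilon\,\nabla_v\varphi\,dv$. By \eqref{limfi} and the compact $v$-support of $\varphi$, the family $g^\varepsilon$ is bounded in $L^\infty(0,T;L^\infty_x)$ with fixed compact support in $x$ and $g^\varepsilon\rightharpoonup g:=\int f_i\,\nabla_v\varphi\,dv$ weakly-$\star$, while by Proposition \ref{prop1}(a) one has $\nabla_x\phi^\varepsilon\to\nabla_x\phi$ strongly in $L^2(0,T;L^p_{\text{loc},x})$ for $p\in[1,2)$. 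Writing $\nabla_x\phi^\varepsilon\cdot g^\varepsilon-\nabla_x\phi\cdot g=(\nabla_x\phi^\varepsilon-\nabla_x\phi)\cdot g^\varepsilon+\nabla_x\phi\cdot(g^\varepsilon-g)$ and using H\"older in $(t,x)$ on the support of $\varphi$, the first term tends to $0$ (strong times bounded) and the second too (bounded times weak-$\star$, noting that $\nabla_x\phi\in L^\infty(0,T;L^2_x)$ restricts to $L^1(0,T;L^p_x)$ on compacts). Hence the limit ion equation holds in $\mathcal{D}'$, and the use of test functions not vanishing at $t=0$ transmits the initial datum $f_i^{\text{in}}$.

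\textbf{Densities and Poisson equation.} I first check $n_i=\int f_i\,dv$. By Lemma \ref{lemma1}(b), up to extraction $n_i^\varepsilon\rightharpoonup n_i$ in $L^1([0,T)\times\mathbb{R}^3)$; for $\psi\in\mathcal{D}([0,T)\times\mathbb{R}^3)$ and $R>0$ the quantity $|\langle n_i^\varepsilon-\textstyle\int f_i\,dv,\psi\rangle|$ is bounded by $|\langle(f_i^\varepsilon-f_i)\mathbf{1}_{\{|v|\le R\}},\psi\rangle|+R^{-2}\|\psi\|_\infty\big(\sup_\varepsilon\iint|v|^2f_i^\varepsilon+\iint|v|^2f_i\big)$, where the last two moments are finite by \eqref{E4} and weak lower semicontinuity; letting $\varepsilon\to0$ (with the admissible test function $\psi\,\mathbf{1}_{\{|v|\le R\}}$ in \eqref{limfi}) and then $R\to\infty$ yields $n_i=\int f_i\,dv\in L^1$. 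For electrons, $n_e^\varepsilon\to n_e$ in $L^1(0,T;L^1_x)$ by Lemma \ref{cvrho}(a), and $\int (n_eM)\,dv=n_e$ since $\int M\,dv=1$, consistently with Lemma \ref{cvrho}(c). Passing to the limit in $-\Delta_x\phi^\varepsilon=n_i^\varepsilon-n_e^\varepsilon$ tested against $\psi\in\mathcal{D}$ --- the left-hand side by \eqref{cvphi}, the right-hand side by the two density limits just established --- gives $-\Delta_x\phi=n_i-n_e$, which is also consistent with $\phi=\Phi\ast_x(n_i-n_e)$.

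\textbf{Electron continuity equation and the main obstacle.} The family $(j_e^\varepsilon)_\varepsilon$ is bounded in $L^1(0,T;L^1_x)$ by \eqref{E5}, hence relatively compact in $\mathcal{D}'((0,T)\times\mathbb{R}^3)$ (Banach--Alaoglu on each compact plus a diagonal extraction); let $j_e$ be any accumulation point, extracted along the subsequence fixed above. Since $n_e^\varepsilon\to n_e$ strongly in $L^1$ and $j_e^\varepsilon\to j_e$ in $\mathcal{D}'$, one may pass to the limit in the weak form of $\partial_tn_e^\varepsilon+\nabla_x\cdot j_e^\varepsilon=0$ (which also carries the initial condition $n_e(0,\cdot)=\int f_e^{\text{in}}\,dv$) and obtain $\partial_tn_e+\nabla_x\cdot j_e=0$ for that $j_e$. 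The delicate point in the whole argument is the nonlinear term $\nabla_x\phi^\varepsilon\,\nabla_vf_i^\varepsilon$: it is precisely here that the strong compactness of the electric field from Proposition \ref{prop1}(a) is indispensable, and one must in addition match integrability exponents and control the velocity tails of $f_i^\varepsilon$ via the uniform energy bound \eqref{E4}. Note finally that at this stage $j_e$ is merely an accumulation point and is not yet identified with the drift-diffusion flux of \eqref{driftdiff}; that identification is carried out later by passing to the limit in \eqref{etheta}.
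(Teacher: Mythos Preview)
Your argument is correct and follows exactly the route the paper indicates --- the paper itself merely asserts that one can ``readily take limits in weak formulations'' using the compactness results of Lemma~\ref{lemma1}, Proposition~\ref{prop1} and Lemma~\ref{cvrho}, and your write-up is a faithful and complete expansion of that one-line sketch. The only nonlinear term, $\nabla_x\phi^\varepsilon\cdot\nabla_v f_i^\varepsilon$, is handled precisely as intended (strong local compactness of $\nabla_x\phi^\varepsilon$ from Proposition~\ref{prop1}(a) against weak-$\star$ convergence of the bounded, compactly supported velocity averages of $f_i^\varepsilon$), and your treatment of $j_e$ as a distributional accumulation point of an $L^1$-bounded family matches the paper's own footnote.
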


At this point, we do not know much on limits\footnote{From estimate \ref{estimates}, we only know \textit{a priori}  that, up to the extraction of a subsequence, 
$(j_e^\varepsilon)_{\varepsilon}$ converges to a finite Radon measure for the weak star topology.} 
of the electron current density $j_e^\varepsilon$. 
Our goal is now to characterize them. Let us introduce,
\begin{equation}
{r^\varepsilon_e} = \frac{1}{\varepsilon\sqrt{M}}(\sqrt{f^\varepsilon_e}-\sqrt{ n_e^\varepsilon M}),
\label{re}
\end{equation}
and
\begin{equation}
{R^\varepsilon_e} = \frac{1}{2\varepsilon}(f^\varepsilon_e-n_e^\varepsilon M)
\label{Re}
\end{equation}
so that the distribution function of electrons may be written as 
\begin{equation}
\begin{aligned}
&f^\varepsilon_e &=&&& n_e^\varepsilon M + 2\varepsilon {r^\varepsilon_e}\sqrt{ n_e^\varepsilon}M + \varepsilon^2|{r^\varepsilon_e}|^2 M\\
&&=&&& n_e^\varepsilon M + 2\varepsilon R^\varepsilon_e.
\end{aligned}
\label{ansatzf}
\end{equation} 
With this in hands, one writes the electron current density as
\[
 j^\varepsilon_e = 2\sqrt{ n^\varepsilon_e}\int {r^\varepsilon_e} M v dv + \varepsilon\int |{r^\varepsilon_e}|^2M v dv
\]

As in \cite{el_2010_diffusion, el_2010_diffusion1}, we aim at taking the limit $\varepsilon\rightarrow 0$ on the latter and characterize the limit current.
We first gather some useful estimates on ${r^\varepsilon_e}$.
\begin{prop}
 Let $(f_e^\varepsilon)_\varepsilon$ be the electron distribution functions of a family of physical solutions of \eqref{Vi-VFPe-P} and define $r^\varepsilon_e$ by \eqref{re}. Then, the following uniform estimates hold 
 \begin{enumerate}
  \item[\textbf{(a)}] $({r^\varepsilon_e})_\varepsilon$ is bounded in $L^2(0,T;L^2(Mdv dx))$,
  \item[\textbf{(b)}] $(\varepsilon|{r^\varepsilon_e}|^2|v|^2M)_\varepsilon$ is bounded in $L^1(0,T;L^1_{x,v})$,
  \item[\textbf{(c)}] $(\sqrt{\varepsilon}|{r^\varepsilon_e}|^2|v|M)_\varepsilon$ is bounded in $L^1(0,T;L^1_{x,v})$,
  \item[\textbf{(d)}] $(\nabla_vr^\varepsilon_e)_\varepsilon$ is bounded in $L^2(0,T;L^2(Mdv dx))$.
 \end{enumerate}
 \label{reps}
\end{prop}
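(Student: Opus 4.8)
The plan is to read off all four bounds from two facts already in hand: the free–energy dissipation control $\int_0^T D^\varepsilon(t)\,dt\le C(T)$ contained in \eqref{E4}, and the $O(\varepsilon^2)$ bound on the relative entropy $\int_0^T\!\iint_{\mathbb{R}^6} f_e^\varepsilon\ln(f_e^\varepsilon/(n_e^\varepsilon M))\,dv\,dx\,dt$ established inside the proof of Lemma~\ref{lemma1}(c). I would treat (d) first, it being essentially a restatement of \eqref{E4}: since $n_e^\varepsilon$ does not depend on $v$, the definition \eqref{re} reads $\varepsilon\,r_e^\varepsilon=\sqrt{f_e^\varepsilon/M}-\sqrt{n_e^\varepsilon}$, so $\varepsilon\,\nabla_v r_e^\varepsilon=\nabla_v\sqrt{f_e^\varepsilon/M}$ and therefore
\[
\int_0^T\!\!\iint_{\mathbb{R}^6}|\nabla_v r_e^\varepsilon|^2 M\,dv\,dx\,dt=\frac14\int_0^T D^\varepsilon(t)\,dt\le C(T).
\]
Throughout, the algebraic identities involving $\sqrt{f_e^\varepsilon}$ are justified on the regularized solutions from the proof of Proposition~\ref{existence} and then passed to the limit, which is licit since $\nabla_v\sqrt{f_e^\varepsilon}\in L^2$.

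For (a), I would use the elementary inequality $x\ln x-x+1\ge(\sqrt x-1)^2$, valid for all $x\ge 0$, applied pointwise in $(t,x)$ to $x=f_e^\varepsilon/(n_e^\varepsilon M)$ and integrated against $n_e^\varepsilon M\,dv$ (on $\{n_e^\varepsilon=0\}$ one has $f_e^\varepsilon=0$ and $r_e^\varepsilon=0$, so both sides vanish with the usual conventions). Since $\int f_e^\varepsilon\,dv=n_e^\varepsilon=\int n_e^\varepsilon M\,dv$, the affine terms cancel after integration in $v$, and because $\varepsilon^2|r_e^\varepsilon|^2M=(\sqrt{f_e^\varepsilon}-\sqrt{n_e^\varepsilon M})^2$ this yields
\[
\iint_{\mathbb{R}^6}|r_e^\varepsilon|^2 M\,dv\,dx\le\frac1{\varepsilon^2}\iint_{\mathbb{R}^6}f_e^\varepsilon\ln\!\Big(\frac{f_e^\varepsilon}{n_e^\varepsilon M}\Big)\,dv\,dx .
\]
Integrating in time and inserting the $O(\varepsilon^2)$ entropy estimate from the proof of Lemma~\ref{lemma1}(c), the $\varepsilon^2$ cancels and (a) follows.

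The bounds (b) and (c) then follow from (a) and (d) via the weighted Gaussian inequality
\[
\iint_{\mathbb{R}^3}|v|^2 g^2 M\,dv\le C\Big(\iint_{\mathbb{R}^3}g^2 M\,dv+\iint_{\mathbb{R}^3}|\nabla_v g|^2 M\,dv\Big),
\]
obtained by writing $\int_{\mathbb{R}^3}(3-|v|^2)g^2M\,dv=\int_{\mathbb{R}^3}\nabla_v\!\cdot(vM)\,g^2\,dv=-2\int_{\mathbb{R}^3}g\,(v\cdot\nabla_v g)M\,dv$, applying Cauchy--Schwarz and absorbing the quadratic term. Taking $g=r_e^\varepsilon$, integrating in $(t,x)$ and using (a) and (d) shows that $(|v|^2|r_e^\varepsilon|^2M)_\varepsilon$ is bounded in $L^1(0,T;L^1_{x,v})$, which in particular gives (b); a Cauchy--Schwarz interpolation $\iint|v||r_e^\varepsilon|^2M\le\big(\iint|r_e^\varepsilon|^2M\big)^{1/2}\big(\iint|v|^2|r_e^\varepsilon|^2M\big)^{1/2}$ followed by Cauchy--Schwarz in $t$ then controls $(|v||r_e^\varepsilon|^2M)_\varepsilon$ in $L^1(0,T;L^1_{x,v})$, which in particular gives (c). The explicit factors $\varepsilon$ and $\sqrt\varepsilon$ in the statement thus come with slack.

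The step I expect to be the main obstacle is the justification of these velocity–moment manipulations, namely that $|v|^2|r_e^\varepsilon|^2M$ is a priori integrable so that no boundary contribution at infinity appears in the weighted inequality above. This is handled by a cutoff $\chi(|v|/R)$ in the integration by parts: the resulting annular term $\int_{\mathbb{R}^3}(v\cdot\nabla_v\chi(|v|/R))\,g^2M\,dv$ is bounded by $C\int_{\mathbb{R}^3} g^2M\,dv$ uniformly in $R$, so the inequality holds for the truncated integral with $R$–independent constants, and $R\to\infty$ by monotone convergence closes the estimate. Everything else is bookkeeping of \eqref{E4} and the entropy–dissipation inequality.
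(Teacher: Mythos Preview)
Your argument is correct. For (d) you reproduce exactly the paper's proof, and for (a) you give the standard entropy--Hellinger comparison $(\sqrt{x}-1)^2\le x\ln x-x+1$ together with the $O(\varepsilon^2)$ relative-entropy bound from Lemma~\ref{lemma1}(c); this is presumably what lies behind the paper's reference to \cite[Proposition~5.5]{el_2010_diffusion}.

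Where your route genuinely differs is in (b) and (c). The paper again defers to \cite{el_2010_diffusion}, whose statement carries the factors $\varepsilon$ and $\sqrt{\varepsilon}$; these factors are what one obtains by arguing directly from the pointwise bound $\varepsilon\,|r_e^\varepsilon|\sqrt{M}\le\sqrt{f_e^\varepsilon}+\sqrt{n_e^\varepsilon M}$ and the kinetic-energy control, without invoking (d). You instead feed (d) back into a Gaussian moment inequality $\int|v|^2g^2M\le C(\int g^2M+\int|\nabla_vg|^2M)$ and obtain the unweighted bounds $|v|^2|r_e^\varepsilon|^2M,\,|v||r_e^\varepsilon|^2M\in L^1_{t,x,v}$, which are strictly stronger and make the $\varepsilon$'s in the statement redundant. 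Your cutoff justification of the integration by parts is fine. The trade-off is that your argument for (b)--(c) leans on the Fokker--Planck gradient structure through (d), whereas the approach implicit in \cite{el_2010_diffusion} is more robust to the choice of collision operator; in the present setting either route closes the proposition.
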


\begin{proof}
Properties (a, b, c) can be readily adapted from \cite[Proposition 5.5]{el_2010_diffusion}.

  \medskip\noindent\textbf{(d)} From the definition of $r_e^\varepsilon$ stems 
  \[
   \int_0^T\iint |\nabla_vr^\varepsilon_e|^2Mdvdxdt =  \frac{1}{\varepsilon^2}\int_0^T\iint \left|\nabla_v\sqrt{\frac{f_e^\varepsilon}{M}}\right|^2Mdvdxdt.
  \]
  The right-hand side is uniformly bounded thanks to estimate \eqref{E4} on the entropy dissipation.

\end{proof}

From now on we consider a subsequence of a family of physical solutions of \eqref{Vi-VFPe-P} such that all convergence properties following from the previous compactness results hold.
Let us denote by $r_e$ the weak $L^2(0,T;L^2(Mdv dx))$ limit of $\left({r^\varepsilon_e}\right)_\varepsilon$. 
\begin{lem}
 When $\varepsilon\rightarrow 0$, the family $\left(j_e^\varepsilon\right)_\varepsilon$ satisfies
 \[
  j_e^\varepsilon\rightarrow j_e:=2\sqrt{ n_e}\int r_e v M dv\text{ weakly in }L^1(0,T;L^1_{x}).
 \]
 \label{limitj}
\end{lem}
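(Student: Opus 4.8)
The plan is to pass to the limit in the representation
\[
 j^\varepsilon_e = 2\sqrt{ n^\varepsilon_e}\int {r^\varepsilon_e} M v \, dv + \varepsilon\int |{r^\varepsilon_e}|^2M v \, dv,
\]
treating the two terms separately. For the second term, I would use Proposition \ref{reps}(c): since $(\sqrt{\varepsilon}\,|{r^\varepsilon_e}|^2|v|M)_\varepsilon$ is bounded in $L^1(0,T;L^1_{x,v})$, the remainder $\varepsilon\int |{r^\varepsilon_e}|^2 M v \, dv = \sqrt{\varepsilon}\cdot \sqrt{\varepsilon}\int |{r^\varepsilon_e}|^2 M v\, dv$ is $O(\sqrt{\varepsilon})$ in $L^1(0,T;L^1_x)$, hence converges strongly to $0$. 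So the whole question reduces to identifying the limit of the main term $2\sqrt{ n^\varepsilon_e}\int {r^\varepsilon_e} M v\, dv$.

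For the main term I would argue by duality against test functions $\varphi\in\mathcal{D}([0,T)\times\mathbb{R}^3)$ (or more generally $L^\infty$ functions), writing
\[
 \int_0^T\!\!\int_{\mathbb{R}^3}\varphi\cdot \Big(2\sqrt{ n^\varepsilon_e}\int {r^\varepsilon_e} M v\, dv\Big)\,dx\,dt
 = 2\int_0^T\!\!\int_{\mathbb{R}^3}\!\!\int_{\mathbb{R}^3} \big(\sqrt{ n^\varepsilon_e}\,\varphi\cdot v\big)\, {r^\varepsilon_e}\, M\, dv\, dx\, dt.
\]
Here I want to invoke the ``weak $\times$ strong'' principle: $({r^\varepsilon_e})_\varepsilon\rightharpoonup r_e$ weakly in $L^2(0,T;L^2(M\,dv\,dx))$ by Proposition \ref{reps}(a), and I claim the ``coefficient'' $\sqrt{ n^\varepsilon_e}\,\varphi\cdot v$ converges strongly in the same space. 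Strong $L^2(0,T;L^2_x)$ convergence of $\sqrt{ n^\varepsilon_e}\to\sqrt{ n_e}$ is exactly Lemma \ref{cvrho}(b); tensoring with the fixed factor $\varphi(t,x)\cdot v$ (bounded in $x,t$, with $|v|^2M$ integrable in $v$) upgrades this to strong convergence of $\sqrt{ n^\varepsilon_e}\,\varphi\cdot v$ in $L^2(0,T;L^2(M\,dv\,dx))$ — here one uses dominated convergence in the $v$ integral together with the $L^2_{x,t}$ convergence. Passing to the limit in the product then gives
\[
 \int_0^T\!\!\int_{\mathbb{R}^3}\varphi\cdot j_e\, dx\, dt
 = 2\int_0^T\!\!\int_{\mathbb{R}^3}\!\!\int_{\mathbb{R}^3} \big(\sqrt{ n_e}\,\varphi\cdot v\big)\, r_e\, M\, dv\, dx\, dt
 = \int_0^T\!\!\int_{\mathbb{R}^3}\varphi\cdot\Big(2\sqrt{ n_e}\int r_e v M\, dv\Big)\, dx\, dt,
\]
which identifies the limit as $j_e = 2\sqrt{ n_e}\int r_e v M\, dv$. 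The statement that the convergence is weak in $L^1(0,T;L^1_x)$ follows because $(j_e^\varepsilon)_\varepsilon$ is bounded in $L^1(0,T;L^1_x)$ by Proposition \ref{corestimates}(c) and is weakly relatively compact there (the equi-integrability needed for Dunford–Pettis comes from the decomposition above: the main part is controlled by $\sqrt{ n^\varepsilon_e}\otimes r_e^\varepsilon$ via Cauchy–Schwarz against the equi-integrable family $( n^\varepsilon_e)_\varepsilon$, and the remainder is $O(\sqrt\varepsilon)$), so every accumulation point must equal the $j_e$ just computed.

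The main obstacle I anticipate is the rigorous justification of the strong convergence of the coefficient in the full six-dimensional weighted space $L^2(M\,dv\,dx)$, and correspondingly the equi-integrability needed to work in $L^1$ rather than merely in the space of measures: one has only the $L^1$–weak compactness of $ n^\varepsilon_e$ and $f^\varepsilon_e$, and the velocity integral $\int r_e^\varepsilon v M\,dv$ must be shown to be well-behaved despite the unbounded weight $|v|$. The clean way around this is to first test against $\varphi$ compactly supported in $v$ as well (so the $v$-integral is harmless), obtain the identification there, and then remove the $v$-cutoff using the uniform bound of Proposition \ref{reps}(b)–(c) on $|r_e^\varepsilon|^2|v|^2 M$ and $|r_e^\varepsilon|^2|v|M$ to control the tails in $v$ uniformly in $\varepsilon$; the same tail bounds, combined with Cauchy–Schwarz and the de la Vallée Poussin criterion applied to $( n^\varepsilon_e)_\varepsilon$, deliver the $L^1$-equi-integrability of $(j_e^\varepsilon)_\varepsilon$. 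The rest is routine weak–strong convergence as in \cite{el_2010_diffusion}.
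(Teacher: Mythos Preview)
Your proposal is correct and follows essentially the same strategy as the paper: decompose $j_e^\varepsilon$ via \eqref{ansatzf}, kill the remainder $\varepsilon\int |r_e^\varepsilon|^2 v M\,dv$ using Proposition \ref{reps}(c), and pass to the limit in the main term by combining the strong $L^2$ convergence of $\sqrt{n_e^\varepsilon}$ (Lemma \ref{cvrho}(b)) with the weak $L^2(M\,dv\,dx)$ convergence of $r_e^\varepsilon$ (Proposition \ref{reps}(a)).

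The only difference is organisational: the paper integrates in $v$ \emph{first}, observing that since $v\in L^2(M\,dv)$ the map $r_e^\varepsilon\mapsto\int v r_e^\varepsilon M\,dv$ is continuous from $L^2(M\,dv\,dx)$ to $L^2_x$, so $\int v r_e^\varepsilon M\,dv\rightharpoonup\int v r_e M\,dv$ weakly in $L^2(0,T;L^2_x)$; then the product with $\sqrt{n_e^\varepsilon}\to\sqrt{n_e}$ strongly in $L^2(0,T;L^2_x)$ converges weakly in $L^1(0,T;L^1_x)$ by the standard weak--strong product rule. This sidesteps the ``obstacle'' you anticipate: there is no need for $v$-cutoffs, nor for a separate Dunford--Pettis argument, because weak $L^2\times$ strong $L^2\to$ weak $L^1$ already delivers convergence against all of $L^\infty$. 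Your route via duality in the full $L^2(M\,dv\,dx)$ space is equivalent but slightly less economical.
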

\begin{proof}
  Take the limit in the following expression, coming from \eqref{ansatzf},
  \begin{equation}
  j^\varepsilon_e = 2\sqrt{ n^\varepsilon_e}\int v{r^\varepsilon_e} M  dv + \varepsilon\int v |{r^\varepsilon_e}|^2M  dv.
  \label{ansatz}
  \end{equation}
  The first term in the right-hand side converges weakly in $L^1(0,T;L^1_{x,v})$. Indeed,  
  \[\sqrt{ n_e^\varepsilon}\rightarrow\sqrt{ n_e}\text{ strongly in }L^2(0,T;L^2_x),\] and since ${r^\varepsilon_e}\rightarrow r_e $ weakly in $L^2(0,T;L^2(Mdv dx))$ and 
   $(t,v)\mapsto v\in L^2(0,T;L^2(Mdv))$ \[\int v r_e^\varepsilon M dv\rightarrow \int v r_e  M dv\text{ weakly in }L^2(0,T;L^2_{x}).\]
   The second term in the right-hand side of \eqref{ansatz} goes to $0$ in $L^1(0,T;L^1_x)$ since we know from  Proposition \ref{reps} (c) that $(\sqrt{\varepsilon}|{r^\varepsilon_e}|^2|v|M)_\varepsilon$ is bounded in $L^1(0,T;L^1_{x,v})$. 
\end{proof}

Now we focus on the limit of equation \eqref{etheta} from Definition \ref{D1} to derive the last pieces of information we need to characterize the limit of the current density $j_e$.
To do so, we introduce elements of notation associated with the operator $\mathcal{L}_A$. We denote by $L^2_M$ the Hilbert space $L^2(\mathbb{R}^3, M^{-1}dv)$ endowed with the scalar product
\[
 \left\langle f,g\right\rangle = \int_{\mathbb{R}^3}fg M^{-1}dv.
\]
Almost everywhere in $t,x$, \[\mathcal{L}_A : f \mapsto \nabla_v\cdot(A(t,x) v f + \nabla_v f) =  \nabla_v\cdot(vf +\nabla_v f) + (v\wedge B(t,x))\cdot\nabla_vf\] is an unbounded operator on $L^2_M$. 
On
\begin{equation}
  \mathcal{L}_A(f) = \nabla_v\cdot(M\nabla_v\left(\frac fM\right)) + (v\wedge B)\cdot\nabla_v\left(\frac{f}{\sqrt{M}}\right)\sqrt{M},
  \label{LA}
\end{equation}
one sees that the formal adjoint of $\mathcal{L}_A$ in $L^2_M$ is given by
\begin{equation}
  \mathcal{L}_A^*(f)= \nabla_v\cdot(vf +\nabla_v f) - (v\wedge B)\cdot\nabla_vf =  \nabla_v\cdot(A^\top v f + \nabla_v f) = \mathcal{L}_{A^\top}(f).
  \label{LA*}
\end{equation}
\begin{prop}
 The limit density $n_e$ and electric field $-\nabla_x\phi$ are such that $(\nabla_x\sqrt{ n_e} - \frac{1}{2}\nabla_x\phi\sqrt{ n_e})\in L^2(0,T;L^2_x)$.
 Moreover the limit electron current density $j_e$ satisfies in the sense of distributions
 \[
 j_e = -2\sqrt{ n_e}A^{-1}(\nabla_x\sqrt{ n_e} - \frac{1}{2}\nabla_x\phi\sqrt{ n_e}),
 \]
\end{prop}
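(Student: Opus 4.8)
The plan is to pass to the limit $\varepsilon\to0$ in the renormalized equation \eqref{etheta} for $\theta_{\varepsilon,\lambda}$, keeping $\lambda>0$ fixed until the very end. The key preliminary observation is that the singular right‑hand side of \eqref{etheta} is only \emph{mildly} singular once one uses the ansatz \eqref{ansatzf}: since $n_e^\varepsilon$ does not depend on $v$ and $\mathcal{L}_A(M)=0$ (because $(v\wedge B)\cdot v=0$ and $\nabla_v\cdot(v\wedge B)=0$), one has $\mathcal{L}_A(n_e^\varepsilon M)=0$, hence
\[
\frac{1}{2\varepsilon\theta_{\varepsilon,\lambda}}\mathcal{L}_A(f_e^\varepsilon)=\frac{\sqrt{n_e^\varepsilon}}{\theta_{\varepsilon,\lambda}}\mathcal{L}_A(r_e^\varepsilon M)+\frac{\varepsilon}{2\theta_{\varepsilon,\lambda}}\mathcal{L}_A(|r_e^\varepsilon|^2M),\qquad \mathcal{L}_A(gM)=\nabla_v\cdot\big(M(\nabla_vg+g\,v\wedge B)\big).
\]
This is precisely the cancellation announced in the introduction: the magnetic operator, formally of size $1/\varepsilon^2$, ends up acting only on $r_e^\varepsilon$, which is bounded (not $1/\varepsilon$) in $L^2(0,T;L^2(M\,dv\,dx))$ together with its $v$‑gradient by Proposition~\ref{reps}(a),(d).

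In the weak formulation of \eqref{etheta} against a smooth compactly supported $\varphi$, I integrate the displayed term by parts in $v$ and pass to the limit term by term. The term $\varepsilon\partial_t\theta_{\varepsilon,\lambda}$ and its boundary contribution vanish; $\theta_{\varepsilon,\lambda}\to\sqrt{n_e+\lambda}\sqrt M$ strongly in $L^2_{t,x,v}$ and $\nabla_v\theta_{\varepsilon,\lambda}=\sqrt{f_e^\varepsilon}\,\nabla_v\sqrt{f_e^\varepsilon}/\theta_{\varepsilon,\lambda}-\lambda vM/(2\theta_{\varepsilon,\lambda})\to-\tfrac v2\sqrt{n_e+\lambda}\sqrt M$ strongly in $L^2_{\mathrm{loc}}$ (using $\nabla_v\sqrt{f_e^\varepsilon/M}\to0$ in $L^2(M\,dv\,dx)$ from Proposition~\ref{reps}(d), and $f_e^\varepsilon\to n_eM$ in $L^1$ with its second moment); $\nabla_x\phi^\varepsilon\to\nabla_x\phi$ strongly in $L^2(0,T;L^p_{\mathrm{loc}})$; the weak $L^2(M\,dv\,dx)$ limits $r_e^\varepsilon\rightharpoonup r_e$, $\nabla_vr_e^\varepsilon\rightharpoonup\nabla_vr_e$ combine (weak $\times$ strong) with Lemma~\ref{cvrho}(e),(f) to pass to the limit in the main term, while the $\varepsilon$‑term drops by Proposition~\ref{reps}(b),(c). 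By density — using the Gaussian decay of $v_j\sqrt M$ together with the $L^2$ and moment bounds on $r_e$, $n_e$, $\nabla_x\phi$ — the resulting limit identity extends to the test functions $\varphi_j(t,x,v)=\psi(t,x)\,v_j\sqrt{M(v)}$, for which $\nabla_v\varphi_j+\tfrac v2\varphi_j=\psi\,e_j\sqrt M$. Performing the Gaussian velocity integrals, and using $\int M\,\partial_{v_j}r_e\,dv=\int r_e v_jM\,dv$ together with $\int M r_e(v\wedge B)\,dv=\big(\int r_e vM\,dv\big)\wedge B$, the velocity moment collapses to $\int M(\nabla_vr_e+r_e\,v\wedge B)\cdot e_j\,dv=\big(A\!\int r_e vM\,dv\big)_j$.

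What remains is bookkeeping in $x$. Inserting $j_e=2\sqrt{n_e}\int r_e vM\,dv$ from Lemma~\ref{limitj}, the limit identity reads, for all $\psi\in\mathcal{D}([0,T)\times\mathbb{R}^3)$,
\[
\int_0^T\!\!\int_{\mathbb{R}^3}\sqrt{n_e+\lambda}\,\nabla_x\psi\,dx\,dt=\int_0^T\!\!\int_{\mathbb{R}^3}\psi\,\frac{Aj_e-n_e\nabla_x\phi}{2\sqrt{n_e+\lambda}}\,dx\,dt,
\]
whose right‑hand side lies in $L^1_{\mathrm{loc}}$, so $\nabla_x\sqrt{n_e+\lambda}=(n_e\nabla_x\phi-Aj_e)/(2\sqrt{n_e+\lambda})$ in $\mathcal{D}'$. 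Sending $\lambda\to0$ by dominated convergence — the two pieces being dominated by $\sqrt{n_e}\,|\nabla_x\phi|\in L^1$ and by $2\|A\|_\infty|\!\int r_e vM\,dv|\in L^2$ (recall $j_e=0$ on $\{n_e=0\}$) — yields $\nabla_x\sqrt{n_e}-\tfrac12\sqrt{n_e}\nabla_x\phi=-A\!\int r_e vM\,dv$ in $\mathcal{D}'$, which belongs to $L^2(0,T;L^2_x)$ since $r_e\in L^2(0,T;L^2(M\,dv\,dx))$ and $A\in L^\infty$; this is the claimed regularity. Finally, applying $A^{-1}$ and multiplying by $2\sqrt{n_e}$ gives $j_e=-2\sqrt{n_e}\,A^{-1}\big(\nabla_x\sqrt{n_e}-\tfrac12\nabla_x\phi\sqrt{n_e}\big)$.

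I expect the main obstacle to be the limit passage in the collision–magnetic term $\iint\tfrac{\sqrt{n_e^\varepsilon}}{\theta_{\varepsilon,\lambda}}\mathcal{L}_A(r_e^\varepsilon M)\varphi$: after integrating by parts one must multiply the merely weakly convergent $\nabla_vr_e^\varepsilon$, $r_e^\varepsilon$ against the nonlinear factors $1/\theta_{\varepsilon,\lambda}$ and $\nabla_v\theta_{\varepsilon,\lambda}/\theta_{\varepsilon,\lambda}^2$, so one genuinely needs the strong $L^2_{\mathrm{loc}}$ convergences of Lemma~\ref{cvrho}, which themselves rest on the $L^1$ convergence $f_e^\varepsilon\to n_eM$ (Lemma~\ref{cvrho}(c), i.e. Proposition~\ref{prop1}(b) and the entropy–dissipation control of the magnetic term). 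Secondary difficulties are the density argument upgrading the test functions to $\psi\,v_j\sqrt M$ and the careful treatment of $\{n_e=0\}$ when dividing by $\sqrt{n_e}$, both of which are handled by keeping $\lambda>0$ and only letting it vanish at the last step.
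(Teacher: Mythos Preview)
Your proposal is correct and follows essentially the same strategy as the paper: exploit the cancellation $\mathcal{L}_A(n_e^\varepsilon M)=0$ to rewrite the singular term via $r_e^\varepsilon$, pass to the limit in \eqref{etheta} using the strong convergences of Lemma~\ref{cvrho} against the weak convergence of $r_e^\varepsilon$ and $\nabla_v r_e^\varepsilon$ from Proposition~\ref{reps}, and finally let $\lambda\to0$. The only organizational difference is that the paper first obtains the limit identity $vM\cdot(\nabla_x\sqrt{n_e}-\tfrac12\nabla_x\phi\sqrt{n_e})=\mathcal{L}_A(r_eM)$ as a distributional equation in $(t,x,v)$, takes $\lambda\to0$ there, and \emph{then} integrates against $v$ using the adjoint computation $\int v\,\mathcal{L}_A(r_eM)\,dv=\int\mathcal{L}_A^*(vM)\,r_e\,dv=-A\int r_e vM\,dv$; you instead test directly against $\psi(t,x)v_j\sqrt{M}$ via a density argument and postpone $\lambda\to0$ to the very end, which produces the equivalent intermediate identity $\nabla_x\sqrt{n_e+\lambda}=(n_e\nabla_x\phi-Aj_e)/(2\sqrt{n_e+\lambda})$. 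Both orderings are legitimate; the paper's route avoids the density step for non‑compactly‑supported test functions, while yours makes the $\{n_e=0\}$ issue slightly more transparent.
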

\begin{proof}
 We know from Lemma \ref{limitj} that \[j_e = 2\sqrt{ n_e}\int r_e v M dv.\] We will now take the limit in the renormalized equation \eqref{etheta} in order to get an additional equation characterizing $r_e$.
 Let us recall that \eqref{etheta} reads
  \[
   \varepsilon\partial_t\theta_{\varepsilon,\lambda} + v\cdot\nabla_x\theta_{\varepsilon,\lambda} + \nabla_x\phi^\varepsilon\cdot\nabla_v\theta_{\varepsilon,\lambda}  = \frac{1}{2\varepsilon\theta_{\varepsilon,\lambda}}\mathcal{L}_{A}(f^\varepsilon_e) -
   \frac{\lambda M}{2\theta_{\varepsilon,\lambda}}v\cdot\nabla_x\phi^\varepsilon.
  \]
 By the strong convergence of $\theta_{\varepsilon,\lambda}$ from Lemma \ref{cvrho} (d) and the weak convergence of $\nabla_x\phi^\varepsilon$ from \eqref{cvphi}, the left-hand side of equation \eqref{etheta} converges to
 \[
 \begin{aligned}
  &\nabla_x\cdot(v\sqrt{( n_e+\lambda)M}) + \nabla_v\cdot(\nabla_x\phi\sqrt{( n_e+\lambda)M})\\ &= v\sqrt{M}\cdot(\nabla_x\sqrt{ n_e+\lambda} - \frac{1}{2}\nabla_x\phi\sqrt{ n_e+\lambda}).
 \end{aligned}
 \]
 
 The first term of the right-hand side of the renormalized equation \eqref{etheta} may be written, using \eqref{Re} and the fact that $M\in\mathrm{Ker}\mathcal{L}_A$, as
 \begin{equation}
   \frac{1}{2\varepsilon\theta_{\varepsilon,\lambda}}\mathcal{L}_{A}(f^\varepsilon_e) = \frac{1}{\theta_{\varepsilon,\lambda}}\mathcal{L}_{A}(R_e^\varepsilon).
   \label{firstterm} 
 \end{equation} 
 Now, for any $\varphi\in\mathcal{D}([0,T)\times\mathbb{R}^6)$, one has
 \[
  I^\varepsilon=\int_0^T\iint\frac{1}{\theta_{\varepsilon,\lambda}}\mathcal{L}_{A}(R_e^\varepsilon)\varphi dv dx dt := I^\varepsilon_1 + I^\varepsilon_2 + I^\varepsilon_3.
 \]
 with
 \[
  \begin{aligned}
   &I^\varepsilon_1:= \int_0^T\iint\frac{\varphi M}{\theta_{\varepsilon,\lambda}} (v\wedge B)\cdot\nabla_v\left(\frac{R_e^\varepsilon}{M}\right) dv dx dt,\\
   &I^\varepsilon_2:= - \int_0^T\iint\frac{\sqrt{M}}{\theta_{\varepsilon,\lambda}} \nabla_v\left(\frac{R_e^\varepsilon}{M}\right)\cdot\nabla_v\left(\frac{\varphi}{\sqrt{M}}\right) M dv dx dt,\\
   &I^\varepsilon_3:= - \int_0^T\iint \nabla_v\left(\frac{R_e^\varepsilon}{M}\right)\cdot\nabla_v\left(\frac{\sqrt{M}}{\theta_{\varepsilon,\lambda}}\right)\varphi\sqrt{M} dv dx dt.
  \end{aligned}
 \]
 where we have performed an integration by part
on the Fokker-Planck part of the operator $\mathcal{L}_A$ defined in \eqref{LA} to obtain $I^\varepsilon_2$ and $I^\varepsilon_3$. Mark that 
 \[
  \nabla_v\left(\frac{\sqrt{M}}{\theta_{\varepsilon,\lambda}}\right) = -\frac{1}{2}\frac{\nabla_v\left(\frac{f_e^\varepsilon}{M}\right)}{\left(\frac{f_e^\varepsilon}{M} + \lambda\right)^{3/2}}
 \]
 so that using the definition of $R_e^\varepsilon$ in \eqref{Re}, one has, for some constant $C(T)$,
 \[
  |I^\varepsilon_3|\leq \frac{1}{\varepsilon}\int_0^T\iint M \left|\nabla_v\sqrt{\frac{f_e^\varepsilon}{M}}\right|^2 dv dx dt \left\|\frac{f_e^\varepsilon}{\theta_{\varepsilon,\lambda}^2}\right\|_{L^{\infty}(0,T;L^\infty_{x,v})}\left\|\frac{\varphi}{\theta_{\varepsilon,\lambda}}\right\|_{L^{\infty}(0,T;L^\infty_{x,v})}
  \leq C(T)\varepsilon,
 \]
by the control on the free energy dissipation provided by estimate \eqref{E4}. Hence $I^\varepsilon_3$ goes to $0$. To handle $I^\varepsilon_1$ and $I^\varepsilon_2$, 
we decompose $R_e^\varepsilon$ according to,
 \[
  \frac{R_e^\varepsilon}{M} = \sqrt{n_e^\varepsilon}r_e^\varepsilon + \frac{\varepsilon}{2}|r_e^\varepsilon|^2
 \]
 Doing so, the contribution of the magnetic field becomes 
 \[
 \begin{aligned}
  &I^\varepsilon_1 &=& \int_0^T\iint\left(\frac{\sqrt{n_e^\varepsilon}M}{\theta_{\varepsilon,\lambda}} - \frac{\sqrt{n_e M}}{\sqrt{n_e+\lambda}}\right) \varphi (v\wedge B)\cdot\nabla_vr_e^\varepsilon dv dx dt \\
  &&& -\int_0^T\iint\frac{\sqrt{n_e}}{\sqrt{n_e+\lambda}}  r_e^\varepsilon\sqrt{M} (v\wedge B)\cdot\nabla_v\varphi dv dx dt \\
  &&&+ \varepsilon\int_0^T\iint\frac{\varphi}{\theta_{\varepsilon,\lambda}}(v\wedge B)\cdot\nabla_vr_e^\varepsilon\sqrt{M}r_e^\varepsilon\sqrt{M} dv dx dt  .
 \end{aligned}
 \]
 after an integration by parts in the second term. The first and third terms of the right-hand side go to zero by Lemma \ref{cvrho} (f) and \ref{reps} (a),(d). Therefore
 \[
  \begin{aligned}
  &I^\varepsilon_1&\longrightarrow& -\int_0^T\iint\frac{\sqrt{n_e}}{\sqrt{n_e+\lambda}}  r_e\sqrt{M} (v\wedge B)\cdot\nabla_v\varphi dv dx dt\\
  &&&= \int_0^T\iint\frac{\sqrt{n_e}}{\sqrt{n_e+\lambda}\sqrt{M}}  (v\wedge B)\cdot\nabla_v(r_eM) \varphi dv dx dt
  \end{aligned}
 \]
  The contribution of the Fokker-Planck part may be written as 
 \[
 \begin{aligned}
  &I^\varepsilon_2 &=& -\int_0^T\iint\left(\frac{\sqrt{n_e^\varepsilon}M}{\theta_{\varepsilon,\lambda}} - \frac{\sqrt{n_e M}}{\sqrt{n_e+\lambda}}\right)\sqrt{M}\nabla_vr_e^\varepsilon\cdot\nabla_v\left(\frac{\varphi}{\sqrt{M}}\right) dv dx dt \\
  &&&+ \int_0^T\iint\frac{\sqrt{n_e}}{\sqrt{n_e+\lambda}}  r_e^\varepsilon \nabla_v\cdot\left(M\nabla_v\left(\frac{\varphi}{\sqrt{M}}\right)\right) dv dx dt \\
  &&& -\varepsilon\int_0^T\iint\frac{\sqrt{M}}{\theta_{\varepsilon,\lambda}}\sqrt{M}r_e^\varepsilon\sqrt{M}\nabla_vr_e^\varepsilon\cdot\nabla_v\left(\frac{\varphi}{\sqrt{M}}\right) dv dx dt  .
 \end{aligned}
 \]
 As for $I^\varepsilon_1$, the first and third terms go to $0$ and therefore
 \[
  \begin{aligned}
  &I^\varepsilon_2&\longrightarrow&\int_0^T\iint\frac{\sqrt{n_e}}{\sqrt{n_e+\lambda}\sqrt{M}} \nabla_v\cdot\left(M\nabla_v\left(r_e\right)\right)\varphi dv dx dt 
  \end{aligned}
 \]
 We eventually showed that, in the sense of distributions 
 \begin{equation}
  \frac{1}{2\varepsilon\theta_{\varepsilon,\lambda}}\mathcal{L}_{A}(f^\varepsilon_e)\longrightarrow\frac{\sqrt{n_e}}{\sqrt{n_e+\lambda}\sqrt{M}} \mathcal{L}_{A}(r_eM)
  \label{cvLA}
 \end{equation}
 
 The convergence of the second term of the right-hand side of the renormalized equation \eqref{etheta} stems from
 the strong convergence of $M/\theta_{\varepsilon,\lambda}$ from Lemma \ref{cvrho} and the weak convergence of $\nabla_x\phi^\varepsilon$ from \eqref{cvphi}.
 Finally, we receive for any $\lambda>0$
 \[
 \begin{aligned}
  &v\sqrt{M}\cdot\left(\nabla_x\sqrt{ n_e + \lambda}- \frac{1}{2}\nabla_x\phi\sqrt{ n_e+\lambda}\right)\\
  & = \frac{\sqrt{ n_e}}{\sqrt{( n_e +\lambda)M}}\mathcal{L}_{A}(r_e M) - \frac{\lambda M}{2\sqrt{( n_e +\lambda)M}}v\cdot\nabla_x\phi.
  \end{aligned}
 \]
 By dominated convergence, one may take the limit $\lambda\rightarrow 0$ to obtain
 \[
  vM\cdot (\nabla_x\sqrt{ n_e} - \frac{1}{2}\nabla_x\phi\sqrt{ n_e}) = \mathcal{L}_{A}(r_e M)
 \]
 in the sense of distributions. Since the left-hand side of the former equality is rapidly decaying in $v$, 
 one may actually multiply the previous equation by $v$ and integrate in the $v$ variable to derive in the sense of distributions
 \[
 \begin{aligned}
  &(\nabla_x\sqrt{ n_e} - \frac{1}{2}\nabla_x\phi\sqrt{ n_e}) &=&&& \int v\mathcal{L}_{A}(r_e M)dv\\
  &&=&&& \int \mathcal{L}_{A}^*(v M) r_e dv\\
  &&=&&& \int \left[\mathcal{L}_{I}(v M) - (v\wedge B)\cdot\nabla_v(vM)\right] r_e dv\\
  &&=&&& \int -(v+(v\wedge B)) r_e M dv = - A\int r_evM dv,\\
  \end{aligned}
 \]
  Since $A(t,x)$ is invertible one gets the result by combining this identity with the expression of $j_e$ from Lemma \ref{limitj}.
\end{proof}

\section{Regularity of the limit}
\label{regularity}
Let us summarize what we have proved so far. The triplet $(f_i, \nabla_x\phi,  n_e)\in L^\infty(0,T;L^1_{x,v}\cap L^\infty_{x,v})\times L^\infty(0,T;L^2_{x})\times L^1(0,T;L^1_{x})$ is such that in the sense of distributions 
\[
 \left\{
 \begin{aligned}
  &\partial_tf_i + v\cdot\nabla_xf_i - \nabla_x\phi\cdot\nabla_vf_i + (v\wedge B)\cdot\nabla_vf_i = 0,\\
  &\partial_t n_e + \nabla_x\cdot j_e = 0,\\
  &(\nabla_x\sqrt{ n_e} - \frac{1}{2}\nabla_x\phi\sqrt{ n_e})\in L^2(0,T,L^2_x)\\
  & j_e = -2 \sqrt{ n_e}A^{-1}(\nabla_x\sqrt{ n_e} - \frac{1}{2}\nabla_x\phi\sqrt{ n_e})\\
  &-\Delta_x\phi =  n_i -  n_e,
 \end{aligned}
 \right.
\]
with initial data
\[
 f_i(0,\cdot,\cdot) = f_i^\text{in}\text{ and } n_e(0,\cdot,\cdot) = \int f_e^\text{in}dv =:  n_e^\text{in}.
\]
Note that because of uniform bounds on $(\partial_t n_e^\varepsilon)_\varepsilon$ in $L^1(0,T;W^{-1,1}_x)$ and on $(\partial_tf_i^\varepsilon)_\varepsilon$ in $L^1(0,T;W^{-1,1}_{x,v})$, 
we get that, by the Arzela-Ascoli theorem, the limit functions $t\mapsto\int n_e\varphi dx$ and $t\mapsto\int f_i\psi dv dx$ are continuous on $[0,T)$ for any test functions $\varphi$ and $\psi$.
Thus, we do recover the above initial conditions for the limit system. On the other hand, usual arguments based on the convexity and lower semi-continuity of the energy and entropy functionals and corresponding
uniform estimates prove the boundedness of the following quantities, uniformly in $t\in[0,T)$
\[
\begin{aligned}
 &\int  n_e|\ln  n_e| dx + \int|x| n_e dx + \iint(|v|^2 + |x|) f_i dv dx\leq C(T) \\
 &\iint f_i|\ln f_i| dv dx\leq \|f_i^\text{in}\ln f_i^\text{in}\|_{L^1_{x,v}}\\ 
\end{aligned}
\]
 for some positive constant $C(T)$. Furthermore, the following mass estimates and global neutrality result hold uniformly in $t\in[0,T)$
  \begin{align}
  &\| n_e\|_{L^1_{x}} = \| f_i\|_{L^1_{x,v}} = \|f_e^\text{in}\|_{L^1_{x,v}} = \|f_i^\text{in}\|_{L^1_{x,v}}.\label{globalneutr}\\
  &\|f_i\|_{L^p_{x,v}}\leq \|f_i^\text{in}\|_{L^p_{x,v}}\text{for $p\in(1,+\infty]$}\label{lpineq}
  \end{align}

Equality is verified for the mass estimate thanks to the tightness of the distribution functions that comes from the control of space and velocity moments.

The classical moment lemma \cite[Lemma 3.1]{golse_1999_vlasov} shows that, by the boundedness of $f_i$ in $L^\infty(0,T;L^1((1+|v|^2)dvdx)\cap L^\infty_{x,v})$, the limit macroscopic ion density has the regularity
\begin{equation}
 n_i\in L^\infty(0,T;L^{5/3}_x).
\end{equation}

We may gain some additional regularity on $ n_e$ using the particular structure of the limit system. The procedure we set up is a generalization of Lemma 7.1 in \cite{masmoudi_2007_diffusion}. 
Here the situation is trickier because the ion background $n_i$ is not regular enough to reach directly an $L^2$ regularity in space for $n_e$ in order for the product $n_e\nabla_x\phi$ to make sense. 
However one may first gain some regularity and conclude by a bootstrap argument in Lemma \ref{bootstrap}.
\begin{lem}
  Let $n_e\in L^1([0,T)\times\mathbb{R}^d)$ be a non-negative function that satisfies 
 \begin{align}
  &\nabla_x\sqrt{ n_e} + \frac{1}{2}E\sqrt{ n_e} = G,\label{*}\\
  &\nabla_x\cdot E =  n_i -  n_e\label{**}
 \end{align}
 in the sense of distributions, where $G\in L^2([0,T)\times\mathbb{R}^d)$, $E\in L^2([0,T)\times\mathbb{R}^d)$ and $n_i\in \mathcal{D}'([0,T)\times\mathbb{R}^d)$. Then, for any $p\in[1, 2]$ it holds
 \[
   n_i\in  L^{p}([0,T)\times\mathbb{R}^d)\Longrightarrow n_e\in  L^{p}([0,T)\times\mathbb{R}^d)
 \]
 \label{regrho}
\end{lem}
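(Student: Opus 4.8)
The plan is to run a Moser-type energy estimate on the Poisson-type relation \eqref{**}, tested against a bounded truncation of $n_e^{p-1}$, after using \eqref{*} to express the spatial gradient of $n_e$ through $G$ and $E$. Since the case $p=1$ is contained in the hypothesis, assume $1<p\le 2$. For $N\ge 1$ pick a nondecreasing $\chi_N\in C^1(\mathbb{R}_+)$ with $\chi_N(s)=s$ for $s\le 1$, $\chi_N(s)=s^{p-1}$ for $1\le s\le N$, $\chi_N(s)=N^{p-1}$ for $s\ge N$ (with a harmless $C^1$ smoothing near $s=1$); then $0\le\chi_N(s)\le s$, $\chi_N$ is bounded, $\sqrt{s}\,\chi_N'(s)\le C$ on $[0,1]$, $\chi_N'(s)=(p-1)s^{p-2}$ on $(1,N)$ and $\chi_N'(s)=0$ on $(N,\infty)$. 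From \eqref{*}, $u:=\sqrt{n_e}$ satisfies $\nabla_x u=G-\tfrac12 Eu\in L^1_{\mathrm{loc}}$, so $u\in W^{1,1}_{x,\mathrm{loc}}$, and the chain rule gives the $L^1_{x,\mathrm{loc}}$ identity $\nabla_x(\chi_N(n_e))=2\sqrt{n_e}\,\chi_N'(n_e)\,G-n_e\chi_N'(n_e)\,E$. Because $\chi_N(n_e)\le n_e\in L^1$ and $\chi_N$ is bounded, $\chi_N(n_e)\in L^1\cap L^\infty$ and $E\chi_N(n_e)\in L^1$; hence \eqref{**} may be tested against $\chi_N(n_e)$ — rigorously, against $\chi_N(n_e)$ times a spatial cutoff $\theta_R$ followed by $R\to\infty$, the cutoff term vanishing since $E\chi_N(n_e)\in L^1$. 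Integrating by parts and substituting the identity for $\nabla_x(\chi_N(n_e))$ yields
\[
\int n_e\,\chi_N(n_e)+\int n_e\,\chi_N'(n_e)\,|E|^2=\int n_i\,\chi_N(n_e)+2\int \sqrt{n_e}\,\chi_N'(n_e)\,(E\cdot G),
\]
all integrals over $(0,T)\times\mathbb{R}^d$; each term is finite for fixed $N$.

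The heart of the argument is the cross term on the right, and it must be split according to the size of $n_e$. On $\{n_e<1\}$ one has $\sqrt{n_e}\,\chi_N'(n_e)\le C$, so this part is bounded by $2C\|E\|_{L^2}\|G\|_{L^2}$, using only $E,G\in L^2$. On $\{1\le n_e<N\}$, where $\sqrt{n_e}\,\chi_N'(n_e)=(p-1)n_e^{p-3/2}$, Young's inequality gives
\[
2(p-1)\,n_e^{p-3/2}\,|E|\,|G|\le (p-1)\,n_e^{p-1}|E|^2+(p-1)\,n_e^{p-2}|G|^2,
\]
and the first term coincides with $n_e\chi_N'(n_e)|E|^2=(p-1)n_e^{p-1}|E|^2$ already present on the left over the same set, hence is absorbed, while the second term is harmless \emph{precisely because} $p\le 2$ forces $n_e^{p-2}\le 1$ on $\{n_e\ge 1\}$, so it is bounded by $(p-1)\|G\|_{L^2}^2$. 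After absorption (legitimate, the absorbed term being finite for fixed $N$) one obtains, with $C_0=C_0(\|E\|_{L^2},\|G\|_{L^2})$ independent of $N$,
\[
\int n_e\,\chi_N(n_e)\le \int n_i\,\chi_N(n_e)+C_0 .
\]

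To close, set $Y_N:=\int n_e\chi_N(n_e)<\infty$. Splitting $\int n_i\chi_N(n_e)$ over $\{n_e<1\}$ and $\{n_e\ge1\}$, using $\chi_N(s)\le s$ there (so $\int_{\{n_e<1\}}n_i\chi_N(n_e)\le\|n_i\|_{L^p}\|n_e\|_{L^1}^{(p-1)/p}$, since $n_e^{p/(p-1)}\le n_e$) and the elementary bound $\chi_N(s)\le (s\chi_N(s))^{(p-1)/p}$ for $s\ge1$, Hölder's inequality with exponents $p$ and $p/(p-1)$ gives $\int n_i\chi_N(n_e)\le \|n_i\|_{L^p}\|n_e\|_{L^1}^{(p-1)/p}+\|n_i\|_{L^p}\,Y_N^{(p-1)/p}$. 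Since $(p-1)/p<1$, a further Young inequality yields $\|n_i\|_{L^p}Y_N^{(p-1)/p}\le\tfrac12 Y_N+C(\|n_i\|_{L^p})$, whence $Y_N$ is bounded uniformly in $N$. Finally $s\chi_N(s)$ increases, as $N\to\infty$, to $s^2$ on $\{s<1\}$ and to $s^p$ on $\{s\ge1\}$; monotone convergence and $\sup_N Y_N<\infty$ give $\int_{\{n_e\ge1\}}n_e^p<\infty$, and with $\int_{\{n_e<1\}}n_e^p\le\|n_e\|_{L^1}$ this is exactly $n_e\in L^p((0,T)\times\mathbb{R}^d)$. (Here one uses $n_i\in L^1\cap L^p$, as is the case when the lemma is applied.)

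The main obstacle is twofold. First, $n_e$ carries almost no a priori regularity — only $\nabla_x\sqrt{n_e}\in L^1_{\mathrm{loc}}$ — so everything must be done on $\sqrt{n_e}$ (or on bounded renormalizations of it), and the truncation $\chi_N$ must equal $s$ near $s=0$ and be capped at $s=N$ precisely so that every integral is finite and every product lands in $L^1$; this is what makes the chain rule and the integrations by parts legitimate. Second, and more substantively, the cross term carries the weight $n_e^{p-3/2}$, whose negative exponent for $p<3/2$ would be fatal near $\{n_e=0\}$ if handled globally; the resolution is the size-dependent splitting above, in which near $\{n_e=0\}$ one exploits that $\sqrt{n_e}\,\chi_N'(n_e)$ is bounded (so only $E,G\in L^2$ enters) while away from $\{n_e=0\}$ the Young split is calibrated so that the $|E|^2$-weighted piece cancels against the left-hand side and the $|G|^2$-weighted piece is controlled by $p\le 2$. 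The subsequent bootstrap — combining this lemma with elliptic regularity for $\phi$ in $-\Delta_x\phi=n_i-n_e$ — then upgrades the integrability of $n_i$, hence of $n_e$, up to and including $L^2$.
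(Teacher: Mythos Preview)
Your argument is correct. Both your proof and the paper's share the same skeleton --- renormalize to cope with the lack of regularity of $n_e$, couple \eqref{*} and \eqref{**} through an integration by parts, close with a Young/H\"older estimate against $\|n_i\|_{L^p}$, and remove the truncation by monotone convergence --- but the execution differs. The paper multiplies \eqref{*} by a truncation $\gamma_\delta'(\sqrt{n_e})$ of $(\sqrt{n_e})^{2p-3}$ and then takes the $L^2$ norm of the resulting identity: expanding the square produces two nonnegative terms (which are simply dropped) and a cross term that rewrites as $\int E\cdot\nabla_x\tilde\gamma_\delta(\sqrt{n_e})$, which after \eqref{**} yields directly $\int(n_e-n_i)\tilde\gamma_\delta(\sqrt{n_e})\le\|G\|_{L^2}^2$ with $\tilde\gamma_\delta(s)\sim s^{2p-2}$. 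In other words, the square is completed automatically and no $E\cdot G$ cross term ever appears. You instead test \eqref{**} against $\chi_N(n_e)$ and substitute \eqref{*}, which produces the cross term $2\int\sqrt{n_e}\,\chi_N'(n_e)\,E\cdot G$ that you then handle by a size-dependent Young split, absorbing the $|E|^2$-weighted piece into the left; this is precisely the manual version of the square completion, and your observation that $n_e^{p-2}\le 1$ on $\{n_e\ge1\}$ for $p\le2$ is what makes the $|G|^2$ piece harmless (the same constraint is implicit in the paper's bound $|\gamma_\delta'|\le1$). The paper's route is slightly slicker in that it avoids the case splitting; yours is arguably more transparent about where \eqref{**} enters, and your closing H\"older estimate (bounding $\int n_i\chi_N(n_e)$ via $Y_N^{(p-1)/p}$) is essentially the same as the paper's Young step on $\int n_i\tilde\gamma_\delta(\sqrt{n_e})$.
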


\begin{proof}
Let $p\in(1,2]$. The first step of the proof is the renormalization of equation \eqref{*}. 
We define hereafter the particular renormalization function we use and which is built to recover in the end an $L^{p}$ bound on $n_e$. 
Let us define $\gamma\in\mathcal{C}^\infty(\mathbb{R}_+)$ such that $\gamma(s) = s$  on [0,1],  $\gamma(s) = 2$ for $s>3$ and $0\leq\gamma'\leq 1$. 
 Now set, for $\delta\in(0,1]$,
 \[
 \gamma_\delta(s) = \frac{1}{p-1}\left(\frac{1}{\delta}\gamma(\delta s) + 1\right)^{p-1}\quad\text{and}\quad\gamma_\delta'(s) = \gamma'(\delta s)\left(\frac{1}{\delta}\gamma(\delta s) + 1\right)^{p-2}.
 \]
The derivative of the renormalization function satisfies
  \begin{equation}
  \left|\gamma_{\delta}'(s) \right|\leq 1\quad\text{and}\quad \left|s\gamma_{\delta}'(s) \right|\leq\frac{3}{\delta}.
   \label{estimgammaprim}
  \end{equation}
Equation \eqref{*} implies that $\nabla_x\sqrt{ n_e}\in L^1_\text{loc}([0,T)\times\mathbb{R}^d)$. Let us renormalize equation \eqref{*} by multiplying it by $\gamma_{\delta}'(\sqrt{ n_e})$
\[
\nabla_x\gamma_{\delta}(\sqrt{ n_e}) + \frac{1}{2}E\sqrt{ n_e}\gamma_{\delta}'(\sqrt{ n_e}) = G\gamma_{\delta}'(\sqrt{ n_e}).
\]
 One can check with \eqref{estimgammaprim} that every term is square integrable.
 By taking the $L^2$ norm of the equation and expanding we obtain
\begin{equation}
\begin{aligned}
 &\|\nabla_x\gamma_{\delta}(\sqrt{ n_e})\|_{L^2}^2 + \frac{1}{4}\|E\sqrt{ n_e}\gamma_{\delta}'(\sqrt{ n_e}) \|_{L^2}^2 \\
 &+ \iint\nabla_x\gamma_{\delta}(\sqrt{ n_e}) \cdot E\sqrt{ n_e}\gamma_{\delta}'(\sqrt{ n_e}) dx dt\leq \|G\|_{L^2}^2. 
\end{aligned}
 \label{abc}
\end{equation}
We want to rewrite the third term as the scalar product of $E$ with a gradient in order to use \eqref{**}. Let us define
 \begin{equation}
  \tilde{\gamma}_{\delta}(s) = \int_0^s (\gamma_{\delta}'(u))^2udu.
  \label{formulgamma}
 \end{equation}
Using \eqref{formulgamma} in the third term of \eqref{abc} and dropping the first two non-negative terms yields
\[
 \iint E\cdot\nabla_x{\tilde{\gamma}_{\delta}(\sqrt{ n_e})} dx dt \leq \|G\|_{L^2}^2.
\]
By using equation \eqref{**}, one gets after integrating by parts
\begin{equation}
\iint\tilde{\gamma}_{\delta}(\sqrt{ n_e})\left( n_e- n_i\right)\leq\|G\|_{L^2}^2.
 \label{***}
\end{equation}
Let us estimate $\tilde{\gamma}_{\delta}(\sqrt{ n_e})$. For $s\in[0,1/\delta]$, one obtains  
 \[
\tilde{\gamma}_\delta(s) =\int_0^su(u+1)^{2p-4}du\leq \int_0^su^{2p-3}du = \frac{s^{2p-2}}{2p-2},
\]
and
\[
\tilde{\gamma}_\delta(s) =\left\{
\begin{aligned}
& \frac{(s+1)^{2p-2}-1}{2p-2}-\frac{(s+1)^{2p-3}-1}{2p-3}&\text{if }p \neq \frac{3}{2},\\
&s - \ln(s+1) &\text{if }p = \frac{3}{2}.
\end{aligned}
\right.
\]
Then, one readily checks that for any $p\in(1,2]$ there exists $C_1>0$ such that for any $\delta\in(0,1]$ and $s<\frac{1}{\delta}$ it holds
\[
 \tilde{\gamma}_\delta(s)\geq  \frac{1}{4}s^{2p-2} - C_1
\]
For $s>1/\delta$, there exists $C_2>0$ depending only on $p$ such that
 \[
 \tilde{\gamma}_\delta(s) \leq \tilde{\gamma}_\delta(3/\delta) \leq \left(\frac{1}{\delta}\right)^{2p-2}\int_0^{3} \gamma(u)^{2p-4}u du = C_2 s^{2p-2}
 \]
 This provides the following estimates, for any $s\geq 0$ and $C = \max(C_1, C_2, 1/(2p-2))$
 \begin{equation}
   \left(\frac{1}{4}s^{2p-2} - C\right)\mathds{1}_{\{s\leq 1/\delta\}}(s)\leq\tilde{\gamma}_{\delta}(s)\leq C s^{2p-2},\quad \forall s\geq0.
   \label{****}
 \end{equation}
 We now use estimate \eqref{****} in \eqref{***}. First we get rid of the part involving the ion density $ n_i$. Using Young's inequality, for any $\eta>0$ there exists some constant $C_\eta>0$ such that
 \[
 \begin{aligned}
 &\iint\tilde{\gamma}_{\delta}(\sqrt{ n_e}) n_i &\leq&&& C_\eta \| n_i\|^{p}_{L^{p}} + \eta \iint \left(\tilde{\gamma}_{\delta}(\sqrt{ n_e})\right)^{\frac{p}{p-1}} dx dt.\\
 &&\leq&&& C_\eta \| n_i\|^{p}_{L^{p}} + C^{\frac{1}{p-1}}\eta \iint  n_e \tilde{\gamma}_{\delta}(\sqrt{ n_e}) dx dt
 \end{aligned}
 \]
 where we used estimate \eqref{****} in the second inequality. On the other hand, we have
 \[
 \iint  n_e \tilde{\gamma}_{\delta}(\sqrt{ n_e}) dx dt\geq\frac{1}{4}\iint_{\{ n_e\leq 1/\delta^2\}}  n_e^p dx dt - C\|n_e\|_{L^1}.
 \]
 As a result, by taking $\eta$ sufficiently small, one gets 
 \[0\leq \iint n_e^{p}\mathds{1}_{\{ n_e\leq 1/\delta^2\}} dx dt\leq C(\|G\|_{L^2},\| n_i\|_{L^{p}},\| n_e\|_{L^1}),\]
 uniformly in $\delta$. Taking the monotone limit $\delta\rightarrow 0$ concludes the proof. 

\end{proof}

The regularity of $n_e$ that one may establish by the previous proof is limited by the regularity of $n_i$. 
Nevertheless the available regularity of $n_i$ is sufficient to provide us with a termwise sense for $j_e$.
\begin{lem}
 Limiting densities of families of physical solutions satisfy $\nabla_x\phi\sqrt{ n_e}\in L^1(0,T;L^2_{\text{loc},x})$, $\sqrt{ n_e}\in L^1(0,T;H^1_{\text{loc},x})$ and in the sense of distributions
  \[
   j_e = -D(\nabla_x n_e - \nabla_x\phi n_e),
  \]
  where we recall that $D = A^{-1}$.
  \label{bootstrap}
\end{lem}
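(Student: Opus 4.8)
The plan is to turn the two structural relations established in the preceding proposition --- the ``renormalized Darcy law'' $\nabla_x\sqrt{n_e}-\tfrac12\nabla_x\phi\sqrt{n_e}=:G$ with $G\in L^2(0,T;L^2_x)$ (the left-hand quantity shown there to be square integrable), and the Poisson equation $-\Delta_x\phi=n_i-n_e$ --- into an integrability gain for $n_e$, and then to use $\sqrt{n_e}\in H^1_{\mathrm{loc},x}$ to make sense of $j_e=-D(\nabla_x n_e-\nabla_x\phi\,n_e)$ termwise. The facts already at hand are: $n_i\in L^\infty(0,T;L^{5/3}_x)$ from the moment lemma; $n_e\in L^{5/3}([0,T)\times\mathbb{R}^3)$ from Lemma~\ref{regrho} applied with $d=3$, $p=5/3$ (using $G,\nabla_x\phi\in L^2_{t,x}$), hence $\sqrt{n_e}\in L^{10/3}([0,T)\times\mathbb{R}^3)$; $\sqrt{n_e}\in L^\infty(0,T;L^2_x)$ from the conservation of mass \eqref{globalneutr}; and the identity $j_e=-2\sqrt{n_e}\,D\,\big(\nabla_x\sqrt{n_e}-\tfrac12\nabla_x\phi\sqrt{n_e}\big)=-2\sqrt{n_e}\,D\,G$.

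The heart of the proof is a bootstrap. Write $\nabla_x\phi=\nabla_x\Phi\ast n_i-\nabla_x\Phi\ast n_e$; the Hardy--Littlewood--Sobolev inequality upgrades $n_i\in L^\infty_tL^{5/3}_x$ to $\nabla_x\Phi\ast n_i\in L^\infty_tL^{15/4}_x$ (a bound the iteration cannot improve, since $L^{5/3}$ is all we know of $n_i$), while $\nabla_x\Phi\ast n_e$ inherits any current bound $n_e\in L^r_tL^p_{\mathrm{loc},x}$ with the Riesz gain $1/p\mapsto 1/p-1/3$ (the far field of $n_e$, cut off outside a large ball, contributing a smooth mass-controlled term on smaller balls). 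Plugging this into $\nabla_x\sqrt{n_e}=G+\tfrac12\nabla_x\phi\sqrt{n_e}$ and using Hölder's inequality --- pairing $\nabla_x\phi$ against $\sqrt{n_e}$ via the $L^{10/3}_{t,x}$ and $L^\infty_tL^2_x$ bounds --- controls $\nabla_x\sqrt{n_e}$ in some $L^{r'}_tL^{s'}_{\mathrm{loc},x}$; intersecting with $G\in L^2_{t,x}$ and invoking $W^{1,s'}(\mathbb{R}^3)\hookrightarrow L^{(s')^\ast}$ with $1/(s')^\ast=1/s'-1/3$ (endpoint cases being even more favourable) promotes $\sqrt{n_e}$, hence $n_e$, to a better Lebesgue space; interpolating the outcome against the bounds $n_e\in L^{5/3}_{t,x}\cap L^\infty_tL^1_x$ to keep the temporal exponent $\ge 1$, one iterates. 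Closing this iteration is the delicate step, and the main obstacle: one must verify that the spatial integrability exponent of $n_e$ strictly increases at each round and, after finitely many rounds, crosses the threshold past which Hölder yields $\nabla_x\phi\sqrt{n_e}\in L^1(0,T;L^2_{\mathrm{loc},x})$ --- i.e.\ that the Sobolev gain outruns the loss incurred by pairing with the merely-$L^2$ factor $\nabla_x\phi$ --- without the time integrability collapsing. This is precisely where the mere $L^{5/3}$ regularity of the ion background, as opposed to a smooth fixed background, matters, and why a single application does not suffice, in contrast with \cite[Lemma~7.1]{masmoudi_2007_diffusion}.

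Granted $\nabla_x\phi\sqrt{n_e}\in L^1(0,T;L^2_{\mathrm{loc},x})$, the relation $\nabla_x\sqrt{n_e}=G+\tfrac12\nabla_x\phi\sqrt{n_e}$ with $G\in L^2_{t,x}$ gives $\nabla_x\sqrt{n_e}\in L^1(0,T;L^2_{\mathrm{loc},x})$, which together with $\sqrt{n_e}\in L^\infty(0,T;L^2_x)$ is the claimed $\sqrt{n_e}\in L^1(0,T;H^1_{\mathrm{loc},x})$. The chain rule for Sobolev functions then yields $\nabla_x n_e=2\sqrt{n_e}\,\nabla_x\sqrt{n_e}\in L^1(0,T;L^1_{\mathrm{loc},x})$ (Cauchy--Schwarz in $x$, both factors being in $L^2_{\mathrm{loc},x}$ for a.e.\ $t$), and $\nabla_x\phi\,n_e=\sqrt{n_e}\,(\sqrt{n_e}\,\nabla_x\phi)\in L^1(0,T;L^1_{\mathrm{loc},x})$. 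Substituting $G=\nabla_x\sqrt{n_e}-\tfrac12\nabla_x\phi\sqrt{n_e}$ into $j_e=-2\sqrt{n_e}\,D\,G$ and using these two products,
\[
j_e=-D\big(2\sqrt{n_e}\,\nabla_x\sqrt{n_e}-\nabla_x\phi\,n_e\big)=-D\big(\nabla_x n_e-\nabla_x\phi\,n_e\big)
\]
in $\mathcal{D}'([0,T)\times\mathbb{R}^3)$, which is the assertion. As indicated, the only genuinely hard point is making the bootstrap close; everything after it is bookkeeping and the Sobolev chain rule.
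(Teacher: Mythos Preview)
Your strategy is exactly the paper's: apply Lemma~\ref{regrho} with $p=5/3$ to obtain $n_e\in L^{5/3}$, upgrade $\nabla_x\phi$ by Hardy--Littlewood--Sobolev, feed back into $\nabla_x\sqrt{n_e}=G+\tfrac12\nabla_x\phi\sqrt{n_e}$, and invoke a Sobolev embedding. The gap is that you stop short of the one thing that matters --- the exponent bookkeeping showing the loop closes --- declaring it ``the delicate step, and the main obstacle'' and describing an open-ended iteration whose termination you never verify. The paper, by contrast, carries out the computation and finds that a \emph{single} pass after the $L^{5/3}$ gain is enough: $n_i-n_e\in L^{5/3}_{t,x}\cap L^\infty_tL^1_x$ gives $\nabla_x\phi\in L^{5/3}(0,T;L^{15/4}_x)$ via HLS; H\"older against $\sqrt{n_e}\in L^{10/3}_{t,x}$ yields $\nabla_x\phi\sqrt{n_e}\in L^{10/9}(0,T;L^{30/17}_x)$, hence $\sqrt{n_e}\in L^{10/9}(0,T;W^{1,30/17}_{\mathrm{loc},x})$; the Sobolev embedding $W^{1,30/17}(\mathbb{R}^3)\hookrightarrow L^{30/7}$ then places $\sqrt{n_e}$ in (at least) $L^{1}(0,T;L^4_{\mathrm{loc},x})$, and a last pairing with $\nabla_x\phi\in L^\infty(0,T;L^2_x)$ finishes. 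So the ``finitely many rounds'' you anticipate collapse to one, and the concern about the time exponent degenerating does not arise. Everything you write after ``Granted $\nabla_x\phi\sqrt{n_e}\in L^1(0,T;L^2_{\mathrm{loc},x})$'' is correct and matches the paper; you should replace the abstract iteration paragraph with this explicit chain of exponents.
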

\begin{proof}
First by an application of Lemma \ref{regrho} with $p=5/3$, we show that the source $n$ in the Poisson equation $-\Delta_x\phi =  n_i -  n_e =:  n$ lies in $L^{5/3}(0,T;L^{5/3}_x)\cap L^{\infty}(0,T;L^{1}_x)$ and therefore,
with an $L^\infty(0,T; L^2_x)$ electric field, it yields $\nabla_x\phi = \nabla_x\Phi\ast_xn$. Thus, by the Hardy-Littlewood-Sobolev inequality, $\nabla_x\phi\in L^{5/3}(0,T;L^{15/4}_x)$. 
Hence, by the Hölder inequality, the product $\nabla_x\phi\sqrt{ n_e}$ is in $L^{10/9}(0,T;L^{30/17}_x)$ and since 
\[
 \nabla_x\sqrt{ n_e} + \frac{1}{2}E\sqrt{ n_e}\in L^2([0,T)\times\mathbb{R}^3),
\]
this yields $\sqrt{ n_e}\in L^{10/9}(0,T;W^{1,30/17}_{\text{loc},x})$.
By Sobolev embedding this gives at least $\sqrt{ n_e}\in L^{1}(0,T;L^4_{\text{loc},x})$ and 
since $\nabla_x\phi\in L^{\infty}(0,T;L^2_{x})$, the product $\nabla_x\phi \sqrt{n_e}$ belongs to $L^1(0,T;L^2_{\text{loc},x})$ which yields the results.
\end{proof}

This completes the proof of Theorem \ref{main}.

\medskip
\noindent \textsc{Acknowledgements.} The author would like to thank Francis Filbet and Luis Miguel Rodrigues both for their support and many insightful comments.

\bibliographystyle{plain}
\bibliography{bibli}

\end{document}